\tikzset{elegant/.style={smooth,thick,samples=50,cyan}}
\tikzset{eaxis/.style={->,>=stealth}}
\newtheorem{thm}{Theorem}[section]
\newtheorem{lem}{Lemma}[section]
\newtheorem{coro}{Corollary}[section]
\newtheorem{rem}{Remark}[section]
\newcommand{\ml}{\mathcal}
\newcommand{\mb}{\mathbb}
\DeclareMathOperator{\diag}{diag}
\DeclareMathOperator{\intt}{int}
\DeclareMathOperator{\extt}{ext}
\DeclareMathOperator{\midd}{mid}
\begin{document}

%
%
%
%
%
%
%
%

\title[Thermoelastic plate equations with different damping mechanisms]
 {Cauchy problems for thermoelastic plate equations with different damping mechanisms}

\author[W. Chen]{Wenhui Chen}
\address{Institute of Applied Analysis, Faculty for Mathematics and Computer Science\\
	 Technical University Bergakademie Freiberg\\
	  Pr\"{u}ferstra{\ss}e 9\\
	   09596 Freiberg\\
	    Germany}
\email{Wenhui.Chen@student.tu-freiberg.de}

\subjclass{Primary 35Q99; Secondary 35B40, 74F05}

\keywords{{Thermoelastic plate equations, Fourier's law, friction, structural damping, diffusion phenomena, asymptotic profiles.}
}
\date{January 1, 2004}

\begin{abstract}
In this paper we study Cauchy problem for thermoelastic plate equations with friction or structural damping in $\mb{R}^n$, $n\geq1$, where the heat conduction is modeled by Fourier's law. We explain some qualitative properties of solutions influenced by different damping mechanisms. We show which damping in the model has a dominant influence on smoothing effect, energy estimates, $L^p-L^q$ estimates not necessary on the conjugate line, and on diffusion phenomena. Moreover, we derive asymptotic profiles of solutions in a framework of weighted $L^1$ data. In particular, sharp decay estimates for lower bound and upper bound of solutions in the $\dot{H}^s$ norm ($s\geq0$) are shown.
\end{abstract}

\maketitle
         \section{Introduction}\label{Section Introduction}

In recent years, thermoelastic plate equations have attracted a lot of attention. The Cauchy problem for linear thermoelastic plate equations are modeled by
\begin{align}\label{Eq. classical thermo plate Fourier}
\left\{
\begin{aligned}
&u_{tt}+\Delta^2u+\Delta\theta=0,&&t>0,\,\,x\in\mb{R}^n,\\
&\theta_t-\Delta\theta-\Delta u_{t}=0,&&t>0,\,\,x\in\mb{R}^n,\\
&(u,u_t,\theta)(0,x)=(u_0,u_1,\theta_0)(x),&&x\in\mb{R}^n,
\end{aligned}
\right.
\end{align}
where the unknowns $u=u(t,x)$ and $\theta=\theta(t,x)$ denote the elongation of a plate and the temperature difference to the equilibrium state, respectively. The recent papers \cite{SaidHouari2013,RackeUeda2016} investigated $L^2$-decay estimates of solutions to \eqref{Eq. classical thermo plate Fourier} by using energy methods in the Fourier space. Simultaneously, \cite{RackeUeda2016} proved the sharpness of the derived decay estimates by applying asymptotic expansions of eigenvalues. Other studies on thermoelastic plate equations can be found in the literature. We refer to \cite{AvalosLasiecka1997,Kim1992,LasieckaTriggiani1998,LasieckaTriggiani199802,LasieckaTriggiani199803,LasieckaTriggiani199804,LiuLiu1997,MunozRiveraRacke1995,MunozRiverRacke1996} for the initial boundary value problem in bounded domains, \cite{DenkRacke2006,DenkRackeShibata2009,DenkRackeShibata2010,MunozRiveraRacke1995,MunozRiverRacke1996} for the Cauchy problem or in general exterior domains.

In the real world and applications, due to some kinds of resistance in the elongation of a plate, we always model a plate equation with damping terms, for instance, plate equations with structural damping in \cite{HorbachIkehataCharao2016,IkehataSoga2014}. When the thermal dissipation modeled by Fourier's law and the dissipation for the elongation of a plate appear at the same time, we model thermoelastic plate equations with an additional damping in the equation for $u$, for example, the thermoelastic plate equations with friction $u_t$ presented in  \cite{WangWnag2018}, with structural damping $-\Delta u_t$ presented in  \cite{FatoriJorgeSilvaMaYang2015,Hao2008}, with Kelvin-Voigt type damping or viscoelastic damping $\Delta^2u_t$ presented in  \cite{WangZhang2017}. For this reason, we consider thermoelastic plate equations with different damping mechanisms in present paper. 

In this paper we are concerned with the following Cauchy problem for thermoelastic plate equations in $\mb{R}^n$, $n\geq1$, where the heat conduction is modeled by Fourier's law:
\begin{align}\label{Eq. thermo plate Fourier}
\left\{
\begin{aligned}
&u_{tt}+\Delta^2u+\Delta\theta+(-\Delta)^{\sigma}u_t=0,&&t>0,\,\,x\in\mb{R}^n,\\
&\theta_t-\Delta\theta-\Delta u_{t}=0,&&t>0,\,\,x\in\mb{R}^n,\\
&(u,u_t,\theta)(0,x)=(u_0,u_1,\theta_0)(x),&&x\in\mb{R}^n,
\end{aligned}
\right.
\end{align}
where $\sigma\in[0,2]$. To be more specific, $\sigma=0$ stands for the system with \emph{friction or external damping}, $\sigma\in(0,2]$ stands for the system with \emph{structural damping}, especially, $\sigma=2$ stands for the system with \emph{Kelvin-Voigt type damping}. The example of the model of thermoealstic plate equations with friction or structural damping \eqref{Eq. thermo plate Fourier} is a special case of $\alpha-\beta-\gamma$ systems, which have been introduced in \cite{FischerRacke2018},
\begin{align*}
\left\{
\begin{aligned}
&u_{tt}+\ml{A}u-\ml{A}^{\beta}\theta+\ml{A}^{\gamma}u_t=0,\\
&\theta_t+\ml{A}^{\alpha}\theta+\ml{A}^{\beta}u_t=0,\\
\end{aligned}
\right.
\end{align*}
when we choose
\begin{align*}
\ml{A}=(-\Delta)^2\text{ and }\alpha=\beta=\frac{1}{2},\,\,\,\,[0,1]\ni\gamma=\frac{1}{2}\sigma.
\end{align*}
Nevertheless, the influence of an additional damping in the equation for $u$ in thermoelastic plate equations on some qualitative properties of solutions as $L^p-L^q$ estimates, diffusion phenomena, asymptotic profiles of solutions is still open.

Our main purpose of this paper is to study different qualitative properties of solutions to the thermoelastic plate equations with different damping mechanisms. More specifically, we are interested in the following properties of solutions to \eqref{Eq. thermo plate Fourier}:
\vskip2mm
\begin{enumerate}[(1)]
	\item smoothing effect and $L^2$ well-posedness;
	\item energy estimates with different assumptions on initial data;
	\item $L^p-L^q$ estimates not necessary on the conjugate line;
	\item diffusion phenomena;
	\item asymptotic profiles of solutions in a framework of weighted $L^1$ data.
\end{enumerate}
\vskip2mm
Then, due to the fact that different kinds of damping (friction, structural damping, thermal damping) have different influence on the model, we will analyze the dominant influence from the damping to different qualitative properties of solutions. In other words, there exists a competition between ``friction or structural damping" and ``thermal damping generated by Fourier's law".
\vskip2mm

In order to study the above qualitative properties of solutions, especially, $L^p-L^p$ estimates for $1\leq p\leq \infty$, diffusion phenomena and asymptotic profiles of solutions, we need to derive representations of solutions instead of using pointwise estimates in the Fourier space. However, because the fractional power operator $(-\Delta)^{\sigma}$ acts on $u_t$ in the damping term, the method of asymptotic expansions of eigenprojections (c.f. \cite{IdeHaramotoKawashima2008,Chen2018}) seems to be difficult to applied. Moreover, the method of asymptotic expansions of eigenvalues (c.f. \cite{RackeUeda2016,IdeHaramotoKawashima2008}) cannot be used directly to prove the sharpness for the derived estimates of solutions. To overcome these difficulties, we may derive representations of solutions by applying methods of WKB analysis. The main tool is the application of a multi-step diagonalization procedure, which was mainly proposed in \cite{JachmannReissig2008,ReissigWang2005}.

In the study of diffusion phenomena of solutions to \eqref{Eq. thermo plate Fourier}, we may observe the corresponding reference system to \eqref{Eq. thermo plate Fourier}, which is consisted of different evolution equations, e.g., heat equation, fractional heat equation, Schr\"odinger equation, fourth-order parabolic equation. The equations of such reference system are determined by the value of $\sigma$ in the damping term $(-\Delta)^{\sigma}u_t$.  It will provide some opportunities for us to understand the model \eqref{Eq. thermo plate Fourier} in a more precise way. 

For asymptotic profiles of solutions in a framework of weighted $L^1$ data in Section \ref{Section Asymptotic behavior}, by introducing
\begin{align*}
U(t,x):&=\left(u_t+|D|^2u,u_t-|D|^2u,\theta\right)^{\mathrm{T}}(t,x),\\
U_0(x):&=\left(u_1+|D|^2u_0,u_1-|D|^2u_0,\theta_0\right)^{\mathrm{T}}(x),\\
P_{U_0}:&=\int_{\mb{R}^n}U_0(x)dx\,\,\,\,\,\,\text{with}\,\,\,\,\,\,|P_{U_0}|\neq0,
\end{align*}
we will prove the following estimates for $t\gg1$:
\begin{align*}
t^{-\frac{n+2s}{4\max\{2-\sigma;1\}}}\left|P_{U_0}\right|\lesssim\left\|U(t,\cdot)\right\|_{\dot{H}^s(\mb{R}^n)}\lesssim t^{-\frac{n+2s}{4\max\{2-\sigma;1\}}}\|U_0\|_{H^s(\mb{R}^n)\cap L^{1,1}(\mb{R}^n)},
\end{align*}
with  $s\geq0$, $\sigma\in[0,2]$. It immediately leads to sharp decay rate of the estimates for the $\dot{H}^s$ norm of solutions.
To the best of the author's knowledge, sharp estimates for lower bound of solutions for dissipative elastic systems are unknown, although the estimates for upper bound of solutions in the $L^2$ norm have been extensively discussed in several kinds of elastic systems, see \cite{Reissig2016,ChenReissig2018,Chen2018} for dissipative elastic waves, \cite{JachmannReissig2008,ReissigWang2005,WangYang2006,YangWang2006,YangWang200602} for thermoelastic systems. We remark that our method can probably applied to some other systems in elastic material (see Remark \ref{Generalization}), too. Furthermore, due to the double damping, including friction or structural damping and thermal damping generated by Fourier's law, it is interesting to investigate which damping will give stronger effects on the asymptotic profiles of solutions.

The paper is organized as follows. In Section \ref{Subsec Fourier asym behav}, we prepare representations of solutions to \eqref{Eq. thermo plate Fourier} by WKB analysis. In Section \ref{Section Qualitative properties}, by using these representations of solutions we study smoothing effect of solutions and $L^2$ well-posedness of the Cauchy problem \eqref{Eq. thermo plate Fourier}. In Section \ref{Subsec Fouier estimate}, we derive some estimates of solutions, including energy estimates with initial data taking from $H^s(\mb{R}^n)\cap L^m(\mb{R}^n)$ with $s\geq0$, $m\in[1,2]$, and $L^p-L^q$ estimates not necessary on the conjugate line. In Section \ref{Subsec Fourier asym profile}, diffusion phenomena for linear thermoelastic plate equations with friction or structural damping are investigated. In Section \ref{Section Asymptotic behavior}, we derive long-time asymptotic profiles of solutions in a framework of weighted $L^1$ data. Finally, in Section \ref{Section Concluding remarks} some concluding remarks complete the paper.

\vskip2mm

\noindent\textbf{Notation. } We now give some notations to be used in this paper.  We denote the identity matrix of dimension $k\times k$ by $I_{k}$. $f\lesssim g$ means that there exists a positive constant $C$ such that $f\leq Cg$. Moreover, $H^s_p(\mb{R}^n)$ and $\dot{H}^s_p(\mb{R}^n)$ with $s\geq0$ and $1\leq p<\infty$, denote Bessel and Riesz potential spaces based on $L^p(\mb{R}^n)$, respectively. Here $\langle D\rangle^s$ and $|D|^s$ stand for the pseudo-differential operators with symbols $\langle\xi\rangle^s$ and $|\xi|^s$, respectively. 

Let us define the Gevrey spaces $\Gamma^{\kappa}(\mb{R}^n)$ for $\kappa\in[1,\infty)$ by (c.f.  \cite{Rodino1993})
\begin{align*}
\Gamma^{\kappa}(\mb{R}^n):=\left\{f\in L^2(\mb{R}^n):\text{ there exists a constant } c\text{ such that } \exp\left(c\langle\xi\rangle^{\frac{1}{\kappa}}\right)\hat{f}\in L^2(\mb{R}^n)\right\}.
\end{align*}

Let us define the weighted $L^1$ spaces $L^{1,\delta}(\mb{R}^n)$ for $\delta\in[0,\infty)$ by
\begin{align*}
L^{1,\delta}(\mb{R}^n):=\left\{f\in L^1(\mb{R}^n):\|f\|_{L^{1,\delta}(\mb{R}^n)}:=\int_{\mb{R}^n}(1+|x|)^{\delta}|f(x)|dx\right\}.
\end{align*}
Particularly, we notice that $L^{1,0}(\mb{R}^n)=L^1(\mb{R}^n)$.
\section{Asymptotic behavior of solutions}\label{Subsec Fourier asym behav}

First of all, we apply the partial Fourier transformation with respect to spatial variables to \eqref{Eq. thermo plate Fourier} to get the following second-order ordinary differential system:
\begin{align}\label{Eq. Four. thermo plate Fourier}
\left\{
\begin{aligned}
&\hat{u}_{tt}+|\xi|^4\hat{u}-|\xi|^2\hat{\theta}+|\xi|^{2\sigma}\hat{u}_t=0,&&t>0,\,\,\xi\in\mb{R}^n,\\
&\hat{\theta}_t+|\xi|^2\hat{\theta}+|\xi|^2\hat{u}_{t}=0,&&t>0,\,\,\xi\in\mb{R}^n,\\
&\big(\hat{u},\hat{u}_t,\hat{\theta}\big)(0,\xi)=\big(\hat{u}_0,\hat{u}_1,\hat{\theta}_0\big)(\xi),&&\xi\in\mb{R}^n.
\end{aligned}
\right.
\end{align}
Introducing a new ansatz $w^{(0)}=w^{(0)}(t,\xi)$ by
\begin{align*}
w^{(0)}:=\left(\hat{u}_t+|\xi|^2\hat{u},\hat{u}_t-|\xi|^2\hat{u},\hat{\theta}\right)^{\mathrm{T}},
\end{align*}
we obtain the first-order system as follows:
\begin{align}\label{Eq. first-order Fourier law}
\left\{
\begin{aligned}
&w_t^{(0)}+\left(|\xi|^2A_0+|\xi|^{2\sigma}A_1\right)w^{(0)}=0,&&t>0,\,\,\xi\in\mb{R}^n,\\
&w^{(0)}(0,\xi)=w^{(0)}_0(\xi),&&\xi\in\mb{R}^n,
\end{aligned}
\right.
\end{align}
where $w^{(0)}_0=w^{(0)}_0(\xi)$ is defined by
\begin{align*}
w^{(0)}_0:=\left(\hat{u}_1+|\xi|^2\hat{u}_0,\hat{u}_1-|\xi|^2\hat{u}_0,\hat{\theta}_0\right)^{\mathrm{T}},
\end{align*}
and the coefficient matrices are given by
\begin{align*}
A_0=\frac{1}{2}\left(
{\begin{array}{*{20}c}
	0 & -2 & -2\\
	2 & 0 & -2\\
	1 & 1 & 2
	\end{array}}
\right) \,\,\,\, \text{and} \,\,\,\, A_1=\frac{1}{2}\left(
{\begin{array}{*{20}c}
	1 & 1 & 0\\
	1 & 1 & 0\\
	0 & 0 & 0
	\end{array}}
\right).
\end{align*}
Additionally, we denote the matrix \begin{align*}
A(|\xi|;\sigma):=|\xi|^2A_0+|\xi|^{2\sigma}A_1,
\end{align*}
and
\begin{align}
U(t,x):&=\left(u_t+|D|^2u,u_t-|D|^2u,\theta\right)^{\mathrm{T}}(t,x),\label{Solution}\\
U_0(x):&=\left(u_1+|D|^2u_0,u_1-|D|^2u_0,\theta_0\right)^{\mathrm{T}}(x).\label{Initialdata}
\end{align}
It is clear that $\ml{F}_{x\rightarrow \xi}\left(U(t,x)\right)=w^{(0)}(t,\xi)$ and $\ml{F}\left(U_0(x)\right)=w^{(0)}_0(\xi)$.
\subsection{Diagonalization schemes}
In the beginning, we divide the phase space into three regions
\begin{align*}
Z_{\intt}(\varepsilon)&=\left\{\xi\in\mb{R}^n:|\xi|\leq\varepsilon\ll1\right\},\\
Z_{\midd}(\varepsilon,N)&=\left\{\xi\in\mb{R}^n:\varepsilon\leq |\xi|\leq N\right\},\\
Z_{\extt}(N)&=\left\{\xi\in\mb{R}^n:|\xi|\geq N\gg1\right\},
\end{align*}
for small, bounded and large frequencies. Later, we  will diagonalize the principle part of the first-order system \eqref{Eq. first-order Fourier law} in each region. Furthermore, let us define $\chi_{\intt}(\xi),\chi_{\midd}(\xi),\chi_{\extt}(\xi)\in \mathcal{C}^{\infty}(\mb{R}^n)$ having their supports in $Z_{\intt}(\varepsilon)$, $Z_{\midd}(\varepsilon,N)$ and $Z_{\extt}(N)$, respectively, so that $\chi_{\midd}(\xi)=1-\chi_{\intt}(\xi)-\chi_{\extt}(\xi)$.

To understand the influence of the parameter $\xi$ on the asymptotic behavior of solutions, we now distinguish between the next four cases.
\begin{itemize}
	\item Case 2.1: $\sigma\in[0,1)$ with $\xi\in Z_{\intt}(\varepsilon)$ or $\sigma\in(1,2]$ with $\xi\in Z_{\extt}(N)$;
	\item Case 2.2: $\sigma\in[0,1)$ with $\xi\in Z_{\extt}(N)$ or $\sigma\in(1,2]$ with $\xi\in Z_{\intt}(\varepsilon)$;
	\item Case 2.3: $\sigma=1$ for all frequencies;
	\item Case 2.4: $\sigma\neq1$ with $\xi\in Z_{\midd}(\varepsilon,N)$.
\end{itemize}
For frequencies in the small zone or the large zone, i.e., Cases 2.1 and 2.2, the diagonalization procedure is available. This procedure, which is developed \cite{JachmannReissig2008,ReissigWang2005,Yagdjian1997}, allows us to derive representations of solutions. For Case 2.3, the matrix $A(|\xi|;1)$ may be understood as no perturbed linear operator for all frequencies due to $A(|\xi|;1)=|\xi|^2(A_0+A_1)$. For this reason, we only need to calculate the eigenvalues of the matrix $A(|\xi|;1)$ directly in Case 2.3. For frequencies in the bounded zone and $\sigma\neq1$, i.e., Case 2.4, we construct a contradiction to prove that the real parts of the characteristic roots have a fixed sign. 

\vskip2mm

\begin{lem}[Treatment for Case 2.1]\label{Lemma 2.1}
	When $\sigma\in[0,1)$ with $\xi\in Z_{\intt}(\varepsilon)$, or $\sigma\in(1,2]$ with $\xi\in Z_{\extt}(N)$, after $\ell$ steps of diagonalization procedure the starting system \eqref{Eq. Four. thermo plate Fourier} is transformed to
	\begin{align*}
	\left\{
	\begin{aligned}
	&w_t^{(\ell)}-\left(\Lambda_0+\cdots+\Lambda_{\ell}+R_{\ell+1}\right)w^{(\ell)}=0,&&t>0,\,\,\xi\in\mb{R}^n,\\
	&w^{(\ell)}(0,\xi)=w_0^{(\ell)}(\xi),&&\xi\in\mb{R}^n,
	\end{aligned}
	\right.
	\end{align*}
	with the diagonalized matrices $\Lambda_1,\dots,\Lambda_{\ell}$ and the remainder $R_{\ell+1}$. The asymptotic  behavior of these matrices can be described as follows:
	\begin{align*}
	\Lambda_0=\ml{O}\big(|\xi|^{2\sigma}\big),\,\,\,\,\Lambda_j=\ml{O}\big(|\xi|^{2(1-\sigma)(j-1)+2}\big),\,\,\,\,R_{\ell+1}=\ml{O}\big(|\xi|^{2(1-\sigma)\ell+2}\big).
	\end{align*}
	Moreover, the characteristic roots $\lambda_{\ell,j}=\lambda_{\ell,j}(|\xi|)$ with $j=1,2,3,$ having the following asymptotic behavior:
	\begin{align*}
	\lambda_{\ell,1}=|\xi|^{4-2\sigma},\,\,\,\,\lambda_{\ell,2}=|\xi|^{2}+|\xi|^{4-2\sigma},\,\,\,\,\lambda_{\ell,3}=|\xi|^{2\sigma}-2|\xi|^{4-2\sigma},
	\end{align*} 
	modulo $\ml{O}\big(|\xi|^{6-4\sigma}\big)$. 
\end{lem}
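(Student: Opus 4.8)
The plan is to regard $|\xi|^{2\sigma}A_1$ as the dominant part of $A(|\xi|;\sigma)=|\xi|^2A_0+|\xi|^{2\sigma}A_1$ and $|\xi|^2A_0$ as a perturbation, and then to run the multi-step diagonalization scheme of \cite{JachmannReissig2008,ReissigWang2005}. The crucial preliminary remark is that in both regimes covered by the lemma the relative size of the perturbation is governed by a single small quantity $|\xi|^{2(1-\sigma)}=|\xi|^2/|\xi|^{2\sigma}$: for $\sigma\in[0,1)$ and $\xi\in Z_{\intt}(\varepsilon)$ one has $2(1-\sigma)>0$, while for $\sigma\in(1,2]$ and $\xi\in Z_{\extt}(N)$ one has $2(1-\sigma)<0$, so that $|\xi|^{2(1-\sigma)}$ is small in each case. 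This common parameter is precisely what drives the whole procedure and explains the exponents $2(1-\sigma)(j-1)+2$ in the statement.

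For the zeroth step I would diagonalize the leading matrix $A_1$, whose eigenvalues are $1,0,0$ with eigenvectors $(1,1,0)^{\mathrm{T}},(1,-1,0)^{\mathrm{T}},(0,0,1)^{\mathrm{T}}$. Conjugating \eqref{Eq. first-order Fourier law} by the eigenvector matrix $M$ produces $\Lambda_0=-|\xi|^{2\sigma}\diag(1,0,0)=\ml{O}(|\xi|^{2\sigma})$ together with a perturbation $|\xi|^2B$ of size $\ml{O}(|\xi|^2)$, where a direct computation gives
\[
B:=M^{-1}A_0M=\begin{pmatrix}0 & 1 & -1\\ -1 & 0 & 0\\ 1 & 0 & 1\end{pmatrix}.
\]
The key structural point is that the $2\times2$ block of $B$ attached to the double eigenvalue $0$ (rows and columns $2,3$) equals $\diag(0,1)$ and is therefore already diagonal: no coupling between the two degenerate modes survives at order $|\xi|^2$, only the simple mode $(1,1,0)^{\mathrm{T}}$ is coupled to the degenerate block, and these two groups are separated by the nonvanishing gap $\sim|\xi|^{2\sigma}$. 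Extracting the diagonal of $|\xi|^2B$ yields $\Lambda_1=\ml{O}(|\xi|^2)$.

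Then I would iterate: at the $k$-th step I look for a conjugating matrix $N_k=I+|\xi|^{2(1-\sigma)}N_k^{(1)}$ and choose $N_k^{(1)}$ so as to cancel the leading off-diagonal part of the current perturbation, which amounts to solving a Sylvester (commutator) equation whose solvability is guaranteed precisely by the nonvanishing eigenvalue differences identified above. Each step removes the off-diagonal terms of the current order and lowers the remainder by one power of $|\xi|^{2(1-\sigma)}$, giving $\Lambda_j=\ml{O}(|\xi|^{2(1-\sigma)(j-1)+2})$ and $R_{\ell+1}=\ml{O}(|\xi|^{2(1-\sigma)\ell+2})$ by a straightforward induction on $k$; the two degenerate modes, indistinguishable at order $|\xi|^{2\sigma}$, first separate at order $|\xi|^2$ (the $\theta$-direction acquiring $|\xi|^2$ from $B_{33}=1$, the other direction remaining $0$ at this order), and the resulting gap $\sim|\xi|^2$ keeps all subsequent Sylvester equations solvable. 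Reading off the diagonal of $\Lambda_0+\Lambda_1+\Lambda_2$ then produces the characteristic roots $\lambda_{\ell,1},\lambda_{\ell,2},\lambda_{\ell,3}$ modulo the first neglected order $\ml{O}(|\xi|^{6-4\sigma})=\ml{O}(|\xi|^{2(1-\sigma)\cdot2+2})$, which forces $\ell\geq2$. As a consistency check for the explicit coefficients, the trace $\mathrm{tr}\,A(|\xi|;\sigma)=|\xi|^{2\sigma}+|\xi|^2$ and the determinant $\det A(|\xi|;\sigma)=|\xi|^6$ are reproduced by the stated roots: their sum equals $|\xi|^{2\sigma}+|\xi|^2$ (the $|\xi|^{4-2\sigma}$ terms cancelling, which pins the factor $-2$ in $\lambda_{\ell,3}$) and their leading product equals $|\xi|^{6}$.

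I expect the main obstacle to be the degeneracy of $A_1$: because its zero eigenvalue is double, the standard one-step diagonalization does not apply directly, and one must control at which order and into which directions the two degenerate modes separate, while verifying at every step that the relevant eigenvalue differences stay bounded away from zero uniformly over $Z_{\intt}(\varepsilon)$, respectively $Z_{\extt}(N)$, so that the conjugating matrices $N_k$ are uniformly invertible and the remainder estimates are genuinely of the claimed order. Pinning down the exact coefficients of the roots, in particular the shared correction $|\xi|^{4-2\sigma}$ in $\lambda_{\ell,1}$ and $\lambda_{\ell,2}$ and the factor $-2$ in $\lambda_{\ell,3}$, will require carrying the explicit entries of $B$ through the first two diagonalization steps rather than merely tracking orders.
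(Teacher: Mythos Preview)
Your proposal is correct and follows essentially the same multi-step diagonalization scheme as the paper: treat $|\xi|^{2\sigma}A_1$ as dominant, diagonalize it first via an eigenvector matrix, then iterate with near-identity conjugations $I+N_k$ solving commutator equations to strip off diagonal corrections order by order, with the degeneracy of $A_1$ resolved at the $|\xi|^2$ level exactly as you describe. The only differences are cosmetic---the paper places the simple eigenvalue of $A_1$ in the third slot rather than the first (so its $\Lambda_0=\diag(0,0,|\xi|^{2\sigma})$ and the labeling of the $\lambda_{\ell,j}$ is permuted relative to yours), and it writes out the explicit matrices $T_1,T_{1\frac{1}{2}},T_2$ rather than arguing abstractly; your trace/determinant check is a nice addition the paper does not include.
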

\begin{proof}
	To start the diagonalization procedure, the matrix $|\xi|^{2\sigma}A_1$ has a dominant influence in comparison with the matrix $|\xi|^2A_0$ in Case 2.1. As the consequence, we should diagonalize $|\xi|^{2\sigma}A_1$ in the first place. With the aid of variable change
	\begin{align*}
	w^{(1)}:=T_0^{-1}w^{(0)}:=\left(
	{\begin{array}{*{20}c}
		-1 & 0 & 1\\
		1 & 0 & 1\\
		0 & 1 & 0
		\end{array}}
	\right)^{-1}w^{(0)},
	\end{align*}
	we derive 
	\begin{align*}
	w_t^{(1)}+\left(\Lambda_0+R_1\right)w^{(1)}=0,
	\end{align*}
	where \begin{align*}
	\Lambda_0=\diag\big(0,0,|\xi|^{2\sigma}\big)=\ml{O}\big(|\xi|^{2\sigma}\big)
	\end{align*} and $R_1=|\xi|^2A_0^{(1)}=\ml{O}\big(|\xi|^2\big)$ with
	\begin{align*}
	A_0^{(1)}=T_0^{-1}A_0T_0=\left(
	{\begin{array}{*{20}c}
		0 & 0 & 1\\
		0 & 1 & 1\\
		-1 & -1 & 0
		\end{array}}
	\right)=\ml{O}(1).
	\end{align*}
	Next, we define \begin{align*}w^{(2)}:=T_1^{-1}w^{(1)}\end{align*} with $T_1:=I_{3}+N_1(|\xi|)$, where
	\begin{align*}
	N_1(|\xi|):=|\xi|^{2-2\sigma}\left(
	{\begin{array}{*{20}c}
		0 & 0 & 1\\
		0 & 0 & 1\\
		1 & 1 & 0
		\end{array}}
	\right)=\ml{O}\big(|\xi|^{2-2\sigma}\big).
	\end{align*}
	Thus, we have
	\begin{align}\label{Eq. Sup 1}
	w_t^{(2)}+\left(\Lambda_0+T_1^{-1}\left(|\xi|^2A_0^{(1)}-[N_1(|\xi|),\Lambda_0]\right)+|\xi|^2T_1^{-1}A_0^{(1)}N_1(|\xi|)\right)w^{(2)}=0,
	\end{align}
	where we use 
	\begin{align*}
	[N_1(|\xi|),\Lambda_0]:=N_1(|\xi|)\Lambda_0-\Lambda_0N_1(|\xi|)=|\xi|^{2}\left(
	{\begin{array}{*{20}c}
		0 & 0 & 1\\
		0 & 0 & 1\\
		-1 & -1 & 0
		\end{array}}
	\right).
	\end{align*}
	By the relation $T_1^{-1}=I_3-T_1^{-1}N_1(|\xi|)$, we transform \eqref{Eq. Sup 1} to the following first-order system:
	\begin{align*}
	w_t^{(2)}+\left(\Lambda_0+\Lambda_1+R_2\right)w^{(2)}=0,
	\end{align*}
	where \begin{align*}
	\Lambda_1:=\diag\big(0,|\xi|^2,0\big)=\ml{O}\big(|\xi|^2\big)
	\end{align*} and $R_2=A_0^{(2)}-T_1^{-1}N_1(|\xi|)A_0^{(2)}=\ml{O}\big(|\xi|^{4-2\sigma}\big)$ with
	\begin{align*}
	A_0^{(2)}=-N_1(|\xi|)\Lambda_1+|\xi|^2A_0^{(1)}N_1(|\xi|)=|\xi|^{4-2\sigma}\left(
	{\begin{array}{*{20}c}
		1 & 1 & 0\\
		1 & 1 & 1\\
		0 & -1 & -2
		\end{array}}
	\right)=\ml{O}\big(|\xi|^{4-2\sigma}\big).
	\end{align*}
	
	By the similar procedure, we introduce \begin{align*}
	w^{(3)}:=T_2^{-1}T_{1\frac{1}{2}}^{-1}w^{(2)}
	\end{align*} with $T_{1\frac{1}{2}}:=I_3+N_{1\frac{1}{2}}(|\xi|)$ and $T_{2}:=I_3+N_{2}(|\xi|)$, where
	\begin{align*}
	&N_{1\frac{1}{2}}(|\xi|):=|\xi|^{4-4\sigma}\left(
	{\begin{array}{*{20}c}
		0 & 0 & 0\\
		0 & 0 & 1\\
		0 & 1 & 0
		\end{array}}
	\right)=\ml{O}\big(|\xi|^{4-4\sigma}\big),\\
	&N_{2}(|\xi|):=|\xi|^{2-2\sigma}\left(
	{\begin{array}{*{20}c}
		0 & 1 & 0\\
		-1 & 0 & 0\\
		0 & 0 & 0
		\end{array}}
	\right)=\ml{O}\big(|\xi|^{2-2\sigma}\big).
	\end{align*}
	So, we derive the following system:
	\begin{align*}
	w_t^{(3)}+\left(\Lambda_0+\Lambda_1+\Lambda_2+R_{3}\right)w^{(3)}=0,
	\end{align*}
	where
	\begin{align*}
	\Lambda_2=\diag\big(|\xi|^{4-2\sigma},|\xi|^{4-2\sigma},-2|\xi|^{4-2\sigma}\big)=\ml{O}\big(|\xi|^{4-2\sigma}\big)
	\end{align*} and $R_3=\ml{O}\big(|\xi|^{6-4\sigma}\big)$. Notice that the matrices $T_{\sigma,\intt}$ and $T_{\sigma,\extt}$, respectively, are defined by
	\begin{align}
	T_{\sigma,\intt}:&=T_0T_1T_{1\frac{1}{2}}T_2\,\,\,\,\text{if}\,\,\,\,\sigma\in[0,1),\nonumber\\
	T_{\sigma,\extt}:&=T_0T_1T_{1\frac{1}{2}}T_2\,\,\,\,\text{if}\,\,\,\,\sigma\in(1,2].\label{T matrices Case 2.1}
	\end{align}
	Then, we carry out further steps of diagonalization proposed in \cite{ReissigSmith2005,Yagdjian1997} to complete the proof.
\end{proof}
\begin{lem}[Treatment for Case 2.2] \label{Lemma 2.2}
	When $\sigma\in[0,1)$ with $\xi\in Z_{\extt}(N)$, or $\sigma\in(1,2]$ with $\xi\in Z_{\intt}(\varepsilon)$, after $\ell$ steps of diagonalization procedure the starting system \eqref{Eq. Four. thermo plate Fourier} is transformed to
	\begin{align*}
	\left\{
	\begin{aligned}
	&w_t^{(\ell)}-\left(\Lambda_0+\cdots+\Lambda_{\ell}+R_{\ell+1}\right)w^{(\ell)}=0,&&t>0,\,\,\xi\in\mb{R}^n,\\
	&w^{(\ell)}(0,\xi)=w_0^{(\ell)}(\xi),&&\xi\in\mb{R}^n,
	\end{aligned}
	\right.
	\end{align*}
	with the diagonalized matrices $\Lambda_1,\dots,\Lambda_{\ell}$ and the remainder $R_{\ell+1}$. The asymptotic  behavior of these matrices can be described as follows:
	\begin{align*}
	\Lambda_0=\ml{O}\big(|\xi|^2\big),\,\,\,\,\Lambda_j=\ml{O}\big(|\xi|^{2(\sigma-1)(j-1)+2\sigma}\big),\,\,\,\,R_{\ell+1}=\ml{O}\big(|\xi|^{2(\sigma-1)\ell+2\sigma}\big).
	\end{align*}
	Moreover, the characteristic roots $\lambda_{\ell,j}=\lambda_{\ell,j}(|\xi|)$ with $j=1,2,3,$ having the following asymptotic behavior:
	\begin{align*}
	\lambda_{\ell,1}=y_1|\xi|^2,\,\,\,\,\lambda_{\ell,2}=y_2|\xi|^2,\,\,\,\,\lambda_{\ell,3}=y_3|\xi|^2,
	\end{align*} 
	modulo $\ml{O}\big(|\xi|^{2\sigma}\big)$, where the constants $y_j$ for $j=1,2,3$ have been shown in \eqref{Value y1 y2 y3} later. 
\end{lem}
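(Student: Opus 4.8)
The plan is to mirror the diagonalization carried out for Case 2.1 in Lemma~\ref{Lemma 2.1}, but with the roles of the two coefficient matrices interchanged. The decisive observation is that in Case 2.2 the quantity $|\xi|^{2(\sigma-1)}$ is a small parameter: for $\sigma\in[0,1)$ with $|\xi|\geq N\gg1$ one has $2(\sigma-1)<0$, whereas for $\sigma\in(1,2]$ with $|\xi|\leq\varepsilon\ll1$ one has $2(\sigma-1)>0$, so that in both situations $|\xi|^{2\sigma}$ is negligible compared with $|\xi|^2$. Hence $|\xi|^2A_0$ is now the dominant part of $A(|\xi|;\sigma)$, and I would begin the procedure by diagonalizing $A_0$ rather than $A_1$.

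First I would determine the spectrum of $A_0$. A direct computation gives the characteristic polynomial
\begin{align*}
\det(A_0-yI_3)=-\big(y^3-y^2+2y-1\big),
\end{align*}
whose derivative $3y^2-2y+2$ has negative discriminant; therefore $A_0$ possesses one simple real eigenvalue together with a pair of simple complex conjugate eigenvalues. In particular the three eigenvalues $y_1,y_2,y_3$ (the constants recorded in \eqref{Value y1 y2 y3}) are pairwise distinct, so $A_0$ is diagonalizable and there exists an invertible $T_0$ with $T_0^{-1}A_0T_0=\diag(y_1,y_2,y_3)$. Setting $w^{(1)}:=T_0^{-1}w^{(0)}$ then yields
\begin{align*}
w_t^{(1)}+\left(\Lambda_0+R_1\right)w^{(1)}=0,
\end{align*}
with $\Lambda_0=|\xi|^2\diag(y_1,y_2,y_3)=\ml{O}\big(|\xi|^2\big)$ and $R_1=|\xi|^{2\sigma}T_0^{-1}A_1T_0=\ml{O}\big(|\xi|^{2\sigma}\big)$, which already reproduces the stated order of $\Lambda_0$ and the leading term $y_j|\xi|^2$ of the characteristic roots.

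Next I would run the standard step-by-step diagonalization proposed in \cite{JachmannReissig2008,ReissigWang2005,Yagdjian1997}. At the $j$th step I would take $T_{j}=I_3+N_j(|\xi|)$, where $N_j(|\xi|)$ is chosen to solve the commutator equation $[N_j(|\xi|),\Lambda_0]=$ (off-diagonal part of the current remainder). Because the diagonal entries $y_1|\xi|^2,y_2|\xi|^2,y_3|\xi|^2$ of $\Lambda_0$ are pairwise distinct for $\xi\neq0$, the linear map $N\mapsto[N,\Lambda_0]$ is invertible on off-diagonal matrices and one obtains $N_j(|\xi|)=\ml{O}\big(|\xi|^{2(\sigma-1)j}\big)$. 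Each step cancels the off-diagonal remainder at the current order, contributes the diagonal correction $\Lambda_j=\ml{O}\big(|\xi|^{2(\sigma-1)(j-1)+2\sigma}\big)$, and pushes the remainder down by the factor $|\xi|^{2(\sigma-1)}$, so that after $\ell$ steps $R_{\ell+1}=\ml{O}\big(|\xi|^{2(\sigma-1)\ell+2\sigma}\big)$. Reading off the diagonal of $\Lambda_0+\cdots+\Lambda_\ell$ gives $\lambda_{\ell,j}=y_j|\xi|^2$ modulo $\ml{O}\big(|\xi|^{2\sigma}\big)$, as claimed.

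The main obstacle is the very first step, namely the diagonalization of $A_0$. In contrast to Case 2.1, where $A_1$ had the explicit spectrum $\{0,0,1\}$ and the double eigenvalue forced the delicate extra ``half steps'' $T_{1\frac12}$ to resolve the degeneracy, here $A_0$ has no closed-form eigenvalues, so $T_0$ and the subsequent corrections $N_j(|\xi|)$ cannot be written down as cleanly. The redeeming feature, which in fact makes the iteration simpler than in Case 2.1, is precisely that the three eigenvalues of $A_0$ are distinct: this non-degeneracy simultaneously guarantees the diagonalizability of the leading part and the solvability of every commutator equation in the subsequent steps. Once this is in place, the order bookkeeping is routine and the remaining steps follow verbatim from the cited references.
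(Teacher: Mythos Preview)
Your proposal is correct and follows essentially the same approach as the paper: identify $|\xi|^2A_0$ as the dominant part, diagonalize $A_0$ via $T_0$ using that its eigenvalues $y_1,y_2,y_3$ (roots of $y^3-y^2+2y-1=0$) are pairwise distinct, and then invoke the standard iterative diagonalization from \cite{ReissigSmith2005,Yagdjian1997}. The paper records the explicit Cardano formulas for $y_j$ and notes $\mathrm{Re}\,y_j>0$, while you instead argue distinctness via the derivative's discriminant and spell out the commutator mechanism of the subsequent steps in more detail; both presentations cover the same ground.
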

\begin{proof}
	In this part, the matrix $|\xi|^2A_0$ has a dominant influence in comparison with the matrix $|\xi|^{2\sigma}A_1$. Thus, we should diagonalize $|\xi|^2A_0$ firstly. After applying the substitution \begin{align*}w^{(1)}:=T_0^{-1}w^{(0)},\end{align*} we arrive at the system
	\begin{align*}
	w_t^{(1)}+\left(\Lambda_0+R_1\right)w^{(1)}=0,
	\end{align*}
	with the diagonal matrix \begin{align*}
	\Lambda_0=|\xi|^2T_0^{-1}A_0T_0=\diag\big(y_1|\xi|^2,y_2|\xi|^2,y_3|\xi|^2\big)=\ml{O}\big(|\xi|^2\big)
	\end{align*} and the remainder $R_1=|\xi|^{2\sigma}T_0^{-1}A_1T_0=\ml{O}\big(|\xi|^{2\sigma}\big)$. In the above, the value of $y_j$ for $j=1,2,3$ are the solutions to the cubic equation \begin{align*}
	y^3-y^2+2y-1=0.
	\end{align*} Then, from direct calculations the value $y_j$ for $j=1,2,3,$ are given by
	\begin{align}\label{Value y1 y2 y3}
	y_1=\frac{1}{3}\left(1+z_1\right),\,\,\,\,y_2=\frac{1}{3}\left(1-\frac{1}{2}z_1+\frac{\sqrt{3}}{2}iz_2\right),\,\,\,\,y_3=\frac{1}{3}\left(1-\frac{1}{2}z_1-\frac{\sqrt{3}}{2}iz_2\right),
	\end{align}
	where
	\begin{align*}
	z_1=\sqrt[3]{\frac{1}{2}\left(3\sqrt{69}+11\right)}-\sqrt[3]{\frac{1}{2}\left(3\sqrt{69}-11\right)},\,\,\,\,z_2=\sqrt[3]{\frac{1}{2}\left(3\sqrt{69}+11\right)}+\sqrt[3]{\frac{1}{2}\left(3\sqrt{69}-11\right)}.
	\end{align*}
	Note that $y_1\neq y_2\neq y_3$ and the real part of $y_j$ are positive for all $j=1,2,3$. We now denote the matrices $T_{\sigma,\intt}$ and $T_{\sigma,\extt}$, respectively, by
	\begin{align}
	T_{\sigma,\intt}:&=T_0\,\,\,\,\text{if}\,\,\,\,\sigma\in(1,2],\nonumber\\
	T_{\sigma,\extt}:&=T_0\,\,\,\,\text{if}\,\,\,\,\sigma\in[0,1).\label{T matrices Case 2.2}
	\end{align}
	Finally, one may apply further steps of diagonalization proposed in \cite{ReissigSmith2005,Yagdjian1997} to complete the proof.
\end{proof}

\vskip2mm

\begin{lem}[Treatment for Case 2.3] \label{Lemma 2.3}
	When $\sigma=1$ with $\xi\in\mb{R}^n$, the starting system \eqref{Eq. Four. thermo plate Fourier} can be transformed to
	\begin{align*}
	\left\{
	\begin{aligned}
	&w_t^{(1)}-\Lambda_0w^{(1)}=0,&&t>0,\,\,\xi\in\mb{R}^n,\\
	&w^{(1)}(0,\xi)=w_0^{(1)}(\xi),&&\xi\in\mb{R}^n,
	\end{aligned}\right.
	\end{align*}
	with the diagonalize matrix $\Lambda_0=\diag\big(y_4|\xi|^2,y_5|\xi|^2,y_6|\xi|^2\big)$, where the constants $y_j$ for $j=4,5,6$ have been shown in \eqref{Value y4 y5 y6} later.
\end{lem}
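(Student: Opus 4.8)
The plan is to take advantage of the exact balance of the two frequency scales at $\sigma=1$. Since $|\xi|^{2\sigma}=|\xi|^2$ precisely when $\sigma=1$, the coefficient matrix of \eqref{Eq. first-order Fourier law} collapses to
\begin{align*}
A(|\xi|;1)=|\xi|^2A_0+|\xi|^2A_1=|\xi|^2(A_0+A_1),
\end{align*}
so that the entire $\xi$-dependence is the scalar factor $|\xi|^2$ multiplying the \emph{constant} matrix $A_0+A_1$. Consequently there is no splitting into a dominant and a perturbative part, the three-zone decomposition plays no role, and, in sharp contrast with Cases 2.1 and 2.2, the multi-step procedure is superfluous: a single exact diagonalization of $A_0+A_1$, valid simultaneously for every $\xi\in\mb{R}^n$, already produces the claimed normal form. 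This is exactly the sense in which $A(|\xi|;1)$ behaves as an unperturbed operator for all frequencies.

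Concretely, I would first record
\begin{align*}
A_0+A_1=\left({\begin{array}{*{20}c}
1/2 & -1/2 & -1\\
3/2 & 1/2 & -1\\
1/2 & 1/2 & 1
\end{array}}\right),
\end{align*}
and compute its characteristic polynomial by expanding $\det(A_0+A_1-yI_3)$, which yields the cubic
\begin{align*}
y^3-2y^2+3y-1=0.
\end{align*}
The one structural fact that genuinely has to be checked is that the three roots are \emph{distinct}: its discriminant equals $-23<0$, so there is a single real root together with a complex conjugate pair, and in particular $A_0+A_1$ is diagonalizable over $\mb{C}$. Solving the cubic by Cardano's method, after the depression $y=t+\tfrac{2}{3}$ (which reduces it to $t^3+\tfrac{5}{3}t+\tfrac{11}{27}=0$ and reproduces the radical $\sqrt{69}$ already seen in \eqref{Value y1 y2 y3}), then furnishes the explicit roots $y_4,y_5,y_6$ collected in \eqref{Value y4 y5 y6}.

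With the three distinct eigenvalues at hand, I would let $T$ denote the invertible matrix whose columns are the associated eigenvectors of $A_0+A_1$ and set $w^{(1)}:=T^{-1}w^{(0)}$. As $T$ is independent of $t$, inserting this into \eqref{Eq. first-order Fourier law} turns the system into
\begin{align*}
w^{(1)}_t+|\xi|^2T^{-1}(A_0+A_1)T\,w^{(1)}=0,
\end{align*}
that is, the asserted diagonal system with $\Lambda_0=\diag\big(y_4|\xi|^2,y_5|\xi|^2,y_6|\xi|^2\big)$, up to the overall sign convention for the $\Lambda$-matrices used throughout Section \ref{Subsec Fourier asym behav}. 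Because the whole argument is elementary linear algebra for a fixed $3\times 3$ matrix, I do not expect any serious obstacle; the only care needed is the bookkeeping in the determinant expansion and the verification of distinctness, without which the diagonalization would fail. As a byproduct useful for the later decay analysis, I would also note that all three roots of $f(y):=y^3-2y^2+3y-1$ have positive real part — this follows from the Routh--Hurwitz criterion applied to $y^3+2y^2+3y+1$ (the polynomial satisfied by $-y_j$), or elementarily from the sign change $f(0)=-1<0<f(1)=1$ together with the coefficient pattern — so the solution decays like $e^{-\mathrm{Re}(y_j)|\xi|^2t}$, the heat-type rate characteristic of the borderline exponent $\sigma=1$.
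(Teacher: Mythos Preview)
Your proposal is correct and follows essentially the same route as the paper: factor $A(|\xi|;1)=|\xi|^2(A_0+A_1)$, compute the characteristic polynomial $y^3-2y^2+3y-1$, solve it by Cardano after the depression $y=t+\tfrac{2}{3}$, and diagonalize via the eigenvector matrix. Your explicit discriminant check $\Delta=-23<0$ is a welcome addition, as the paper merely asserts $y_4\neq y_5\neq y_6$; likewise your remark on the sign convention is apt, since the paper's own statements and proofs of Lemmas \ref{Lemma 2.1}--\ref{Lemma 2.3} differ by a global sign in front of $\Lambda_0$.
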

\begin{proof}
	Here the matrices $|\xi|^2A_0$ and $|\xi|^{2\sigma}A_1$ with $\sigma=1$ have the same influence on the principle part. For this reason, the following system comes:
	\begin{align*}
	w^{(0)}_t+A(|\xi|;1)w^{(0)}=0.
	\end{align*}
	From direct calculation, we get
	\begin{align*}
	0=\text{det}\left(A(|\xi|;1)-\lambda I_{3}\right)&=\left|
	{\begin{array}{*{20}c}
		\frac{1}{2}|\xi|^{2}-\lambda & \frac{1}{2}|\xi|^{2}-|\xi|^2 & -|\xi|^2\\
		\frac{1}{2}|\xi|^{2}+|\xi|^2 & \frac{1}{2}|\xi|^{2}-\lambda & -|\xi|^2\\
		\frac{1}{2}|\xi|^2 & \frac{1}{2}|\xi|^2 & |\xi|^2-\lambda
		\end{array}}
	\right|\\
	&=-\lambda^3+2|\xi|^2\lambda^2-3|\xi|^4\lambda+|\xi|^6\\
	&=-|\xi|^6\left(\left(\frac{\lambda}{|\xi|^2}\right)^3-2\left(\frac{\lambda}{|\xi|^2}\right)^2+3\left(\frac{\lambda}{|\xi|^2}\right)-1\right).
	\end{align*}
	In other words, we only need to study the solution to the cubic equation \begin{align*}
	y^3-2y^2+3y-1=0.
	\end{align*} By a simple calculation, we find the solution to above cubic equation given by
	\begin{align}\label{Value y4 y5 y6}
	y_4=z_3-\frac{5}{9z_3}+\frac{2}{3},\,\,\,\,y_5=z_4-\frac{5}{9z_4}+\frac{2}{3},\,\,\,\,y_6=z_5-\frac{5}{9z_5}+\frac{2}{3},
	\end{align}
	where 
	\begin{align*}
	z_3=\frac{1}{3}\sqrt[3]{-\frac{11}{2}+\frac{3}{2}\sqrt{69}},\,\,\,\,z_{4}=\left(-\frac{1}{2}+\frac{\sqrt{3}}{2}i\right)z_3,\,\,\,\,z_{5}=\left(-\frac{1}{2}-\frac{\sqrt{3}}{2}i\right)z_3.
	\end{align*}
	Note that $y_4\neq y_5\neq y_6$ with $\text{Re } y_j>0$ for $j=4,5,6$.
	
	By introducing a new ansatz \begin{align*}
	w^{(1)}:=T_{1,0}^{-1}w^{(0)},
	\end{align*} we obtain the following system:
	\begin{align*}
	w^{(1)}_t+\Lambda_0w^{(1)}=0,
	\end{align*}
	with the diagonal matrix
	\begin{align*}
	\Lambda_0=T_{1,0}^{-1}A(|\xi|;1)T_{1,0}=\diag\left(y_4|\xi|^2,y_5|\xi|^2,y_6|\xi|^2\right)=\ml{O}\big(|\xi|^2\big).
	\end{align*}
	Then, the proof of this lemma is completed.
\end{proof}

Lastly, we derive an exponential decay result for frequencies in the bounded zone $Z_{\midd}(\varepsilon,N)$ to guarantee the stability of solutions to \eqref{Eq. first-order Fourier law} for $\sigma\in[0,1)\cup(1,2]$.

\vskip2mm

\begin{lem}[Treatment for Case 2.4] \label{Lemma 2.4}
	The solution $w^{(0)}=w^{(0)}(t,\xi)$ to the Cauchy problem \eqref{Eq. first-order Fourier law} with $\sigma\in[0,1)\cup(1,2]$ satisfies
	\begin{align*}
	\big|w^{(0)}(t,\xi)\big|\lesssim e^{-ct}\big|w_0^{(0)}(\xi)\big|,
	\end{align*}
	for $\xi\in Z_{\midd}(\varepsilon,N)$, where $c$ is a positive constant.
\end{lem}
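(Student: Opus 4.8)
The plan is to represent the solution by a matrix exponential and reduce the claim to a uniform spectral bound. Since \eqref{Eq. first-order Fourier law} reads $w_t^{(0)}=-A(|\xi|;\sigma)w^{(0)}$, we have $w^{(0)}(t,\xi)=e^{-A(|\xi|;\sigma)t}w_0^{(0)}(\xi)$, so it suffices to prove that there is a constant $c>0$ such that every eigenvalue $\lambda=\lambda(|\xi|;\sigma)$ of $A(|\xi|;\sigma)$ satisfies $\mathrm{Re}\,\lambda\geq c$ for all $\xi\in Z_{\midd}(\varepsilon,N)$ and $\sigma\in[0,1)\cup(1,2]$; the exponential estimate for $\|e^{-A(|\xi|;\sigma)t}\|$ then follows from compactness of the bounded zone together with standard estimates for matrix exponentials.

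First I would compute the characteristic polynomial. Writing $a=|\xi|^2$ and $b=|\xi|^{2\sigma}$, a direct calculation with $A(|\xi|;\sigma)=aA_0+bA_1$ gives $\mathrm{tr}\,A=a+b$, $\det A=a^3$, and sum of principal $2\times2$ minors $=2a^2+ab$, so that
\begin{align*}
\det\!\big(\lambda I_{3}-A(|\xi|;\sigma)\big)=\lambda^3-(a+b)\lambda^2+(2a^2+ab)\lambda-a^3.
\end{align*}
The naive energy approach fails here, and this is the main obstacle: the symmetric part $\tfrac12(A+A^{\mathrm T})$ is not positive definite on the bounded zone (its upper-left $2\times2$ block is singular, and it even becomes indefinite where $|\xi|^2>8|\xi|^{2\sigma}$), so one cannot read off $\mathrm{Re}\,\lambda>0$ from a standard symmetrizer. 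A different argument is therefore needed.

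To circumvent this, I would argue by contradiction that no eigenvalue lies on the imaginary axis. Since $\det A=a^3\neq0$ for $\xi\neq0$, the value $\lambda=0$ is excluded. Suppose next that $\lambda=i\eta$ with $\eta\in\mb{R}$ solves the characteristic equation; separating real and imaginary parts yields $(a+b)\eta^2=a^3$ and $\eta^2=2a^2+ab$. Eliminating $\eta^2$ leads to $(a+b)(2a^2+ab)=a^3$, that is $a^2+3ab+b^2=0$, which is impossible for $a,b>0$. Hence $A(|\xi|;\sigma)$ has no purely imaginary eigenvalue for any $\xi\neq0$.

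Finally I would upgrade ``no imaginary-axis root'' to ``all roots in the open right half-plane''. The three roots depend continuously on $(a,b)$, which ranges over the connected open quadrant $(0,\infty)^2$, and by the previous step no root can cross the imaginary axis; hence the number of roots with $\mathrm{Re}\,\lambda>0$ is constant on $(0,\infty)^2$. Evaluating along the diagonal $a=b$, where after the scaling $\lambda=y|\xi|^2$ the cubic becomes $y^3-2y^2+3y-1=0$ treated in Lemma \ref{Lemma 2.3} with all roots of positive real part, we conclude $\mathrm{Re}\,\lambda>0$ throughout $(0,\infty)^2$, in particular for every admissible pair $(|\xi|,\sigma)$. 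On the compact annulus $Z_{\midd}(\varepsilon,N)$ the continuous map $\xi\mapsto\min_j\mathrm{Re}\,\lambda_j$ then attains a positive minimum $c_0>0$, while $\|A(|\xi|;\sigma)\|$ stays uniformly bounded; standard estimates for the matrix exponential give $\|e^{-A(|\xi|;\sigma)t}\|\lesssim e^{-ct}$ for any $c\in(0,c_0)$, uniformly in $\xi\in Z_{\midd}(\varepsilon,N)$, and applying this bound to $w_0^{(0)}(\xi)$ completes the proof.
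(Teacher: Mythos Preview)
Your proof is correct and follows the same core strategy as the paper: compute the characteristic polynomial, show by contradiction that no eigenvalue lies on the imaginary axis, and then use continuity together with compactness of the bounded zone to get a uniform positive lower bound on the real parts. The paper obtains the same pair of equations $(a+b)\eta^2=a^3$ and $\eta^2=2a^2+ab$ and simply declares them contradictory, whereas you carry the algebra through to $a^2+3ab+b^2=0$. The only substantive difference is in how positivity is anchored for the continuity argument: the paper invokes Lemmas \ref{Lemma 2.1} and \ref{Lemma 2.2} to assert $\mathrm{Re}\,\lambda_j>0$ at the zone boundaries $|\xi|=\varepsilon$ and $|\xi|=N$ and then lets $|\xi|$ vary across the annulus, while you parametrize by $(a,b)\in(0,\infty)^2$ and use the diagonal $a=b$ (Lemma \ref{Lemma 2.3}) as the reference point. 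Your route is a bit more self-contained, since it does not rely on the asymptotic diagonalizations, and it in fact yields $\mathrm{Re}\,\lambda_j>0$ for every $\xi\neq0$ and every $\sigma\in[0,2]$ at once; the paper's route ties the argument more tightly to the zone decomposition already established. You also explicitly rule out $\lambda=0$ via $\det A=a^3\neq0$, a point the paper leaves implicit.
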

\begin{proof}
	The following considerations help us obtain a priori estimate for the characteristic roots for frequencies in the bounded zone $Z_{\midd}(\varepsilon,N)$. We assume that there is a purely imaginary eigenvalue $\lambda=ia$ with $a\in\mb{R}\backslash\{0\}$ of the coefficient matrix $A(|\xi|;\sigma)$ for $\xi\neq0$. The eigenvalue $\lambda$ satisfies the following cubic equation:
	\begin{align}
	0=\text{det}(A(|\xi|;\sigma)-\lambda I_{3})&=\left|
	{\begin{array}{*{20}c}
		\frac{1}{2}|\xi|^{2\sigma}-\lambda & \frac{1}{2}|\xi|^{2\sigma}-|\xi|^2 & -|\xi|^2\\
		\frac{1}{2}|\xi|^{2\sigma}+|\xi|^2 & \frac{1}{2}|\xi|^{2\sigma}-\lambda & -|\xi|^2\\
		\frac{1}{2}|\xi|^2 & \frac{1}{2}|\xi|^2 & |\xi|^2-\lambda
		\end{array}}
	\right|\nonumber\\
	&=-\lambda^3+\big(|\xi|^{2\sigma}+|\xi|^2\big)\lambda^2-\big(2|\xi|^4+|\xi|^{2+2\sigma}\big)\lambda+|\xi|^6.\label{Supp 01}
	\end{align}
	Plugging $\lambda=ia$ in \eqref{Supp 01} and considering the real and imaginary parts of the coefficient of $a$, we conclude the following two equations, respectively:
	\begin{align*}
	\left\{
	\begin{aligned}
	-a^2\big(|\xi|^{2\sigma}+|\xi|^2\big)+|\xi|^6&=0,\\
	a\big(a^2-2|\xi|^4-|\xi|^{2+2\sigma}\big)&=0,
	\end{aligned}\right.
	\quad\Rightarrow\quad
	\left\{
	\begin{aligned}
	a^2&=\frac{|\xi|^6}{|\xi|^{2\sigma}+|\xi|^2},\\
	a^2&=2|\xi|^{4}+|\xi|^{2+2\sigma},
	\end{aligned}\right.
	\end{align*}
	where we use $a\neq0$.	They lead to a contradiction immediately because $\xi\in Z_{\midd}(\varepsilon,N)$. Then, no pure imaginary characteristic roots of $A(|\xi|;\sigma)$ for all $\sigma\in[0,1)\cup(1,2]$ can exist for frequencies in the bounded zone. Consequently, due to the compactness of the bounded zone $Z_{\midd}(\varepsilon,N)$ and the continuity of $\text{Re }\lambda_j(|\xi|)$ together with $\text{Re }\lambda_j(|\xi|)>0$, $j=1,2,3,$ for $|\xi|=\varepsilon$ and $|\xi|=N$, we complete the proof immediately.
\end{proof}

\subsection{Representations of solutions} From Lemmas \ref{Lemma 2.1} and \ref{Lemma 2.2}, we know that when $\sigma\in[0,1)\cup(1,2]$ with small frequencies or large frequencies the uniform invertibility of $T_{\sigma,\intt}$ and $T_{\sigma,\extt}$ hold. Thus, we have the next theorems for the representations of solutions. The proofs of them are based on Lemmas \ref{Lemma 2.1} and \ref{Lemma 2.2}.

\vskip2mm

\begin{thm}\label{Thm. Rep. Sol. Small freq. Fourier}
	There exists a matrix $T_{\sigma,\intt}$ for $\sigma\in[0,1)\cup(1,2]$, which is uniformly invertible for small frequencies such that the following representation formula for the Cauchy problem \eqref{Eq. first-order Fourier law} with  $\sigma\in[0,1)\cup(1,2]$ holds:
	\begin{align*}
	\chi_{\intt}(\xi)w^{(0)}(t,\xi)=\chi_{\intt}(\xi)T_{\sigma,\intt}\diag\left(e^{-\lambda_1(|\xi|)t},e^{-\lambda_2(|\xi|)t},e^{-\lambda_3(|\xi|)t}\right)T_{\sigma,\intt}^{-1}w_0^{(0)}(\xi),
	\end{align*}
	where the characteristic roots $\lambda_j(|\xi|)$ for $j=1,2,3,$ have the following asymptotic behavior:
	\begin{itemize}
		\item if $\sigma\in[0,1)$, then we have
		\begin{align*}
		\lambda_1(|\xi|)=|\xi|^{4-2\sigma},\,\,\,\,\lambda_2(|\xi|)=|\xi|^2+|\xi|^{4-2\sigma},\,\,\,\,\lambda_{3}(|\xi|)=|\xi|^{2\sigma}-2|\xi|^{4-2\sigma},
		\end{align*}
		modulo $\ml{O}\big(|\xi|^{6-4\sigma}\big)$;
		\item if $\sigma\in(1,2]$, then we have
		\begin{align*}
		\lambda_1(|\xi|)=y_1|\xi|^2,\,\,\,\,\lambda_2(|\xi|)=y_2|\xi|^2,\,\,\,\,\lambda_3(|\xi|)=y_3|\xi|^2,
		\end{align*}
		modulo $\ml{O}\big(|\xi|^{2\sigma}\big)$, where $y_1,y_2,y_3$ are determined in \eqref{Value y1 y2 y3}.
	\end{itemize}
\end{thm}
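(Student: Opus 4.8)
The plan is to read the representation directly off the multi-step diagonalizations already performed in Lemmas \ref{Lemma 2.1} and \ref{Lemma 2.2}, handling the two ranges of $\sigma$ separately. For $\sigma\in[0,1)$ with $\xi\in Z_{\intt}(\varepsilon)$ we are in Case 2.1, while for $\sigma\in(1,2]$ with $\xi\in Z_{\intt}(\varepsilon)$ we are in Case 2.2; in either case the product of the successive transformation matrices is exactly the matrix $T_{\sigma,\intt}$ fixed in \eqref{T matrices Case 2.1}, namely $T_0T_1T_{1\frac{1}{2}}T_2$, when $\sigma\in[0,1)$, and $T_{\sigma,\intt}=T_0$ from \eqref{T matrices Case 2.2} when $\sigma\in(1,2]$. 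I would first establish the uniform invertibility of $T_{\sigma,\intt}$ on $Z_{\intt}(\varepsilon)$. The matrix $T_0$ is a constant invertible matrix, and every remaining factor has the form $I_3+N(|\xi|)$ with $N(|\xi|)=\ml{O}\big(|\xi|^{2-2\sigma}\big)$ or $\ml{O}\big(|\xi|^{4-4\sigma}\big)$. Since $\sigma<1$ forces the exponents $2-2\sigma$ and $4-4\sigma$ to be positive, one may shrink $\varepsilon$ so that $\|N(|\xi|)\|<1$ for all $|\xi|\leq\varepsilon$; then each factor is invertible via a Neumann series with a uniform bound, and hence so is the product together with $T_0^{-1}$.

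Next I would upgrade the approximate diagonalization to an \emph{exact} one. Lemmas \ref{Lemma 2.1} and \ref{Lemma 2.2} reduce the system to $w^{(\ell)}_t+\big(\Lambda_0+\cdots+\Lambda_{\ell}+R_{\ell+1}\big)w^{(\ell)}=0$, where the accumulated diagonal part already reproduces the claimed leading terms and the remainder satisfies $R_{\ell+1}=\ml{O}\big(|\xi|^{2(1-\sigma)\ell+2}\big)$ in Case 2.1 and $R_{\ell+1}=\ml{O}\big(|\xi|^{2(\sigma-1)\ell+2\sigma}\big)$ in Case 2.2. The decisive structural fact is that the three diagonal entries are separated at leading order: of sizes $|\xi|^{2\sigma},|\xi|^{2},|\xi|^{4-2\sigma}$ with $2\sigma<2<4-2\sigma$ in Case 2.1, and $y_1|\xi|^2,y_2|\xi|^2,y_3|\xi|^2$ with pairwise distinct $y_j$ from \eqref{Value y1 y2 y3} in Case 2.2. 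This spectral gap lets me take $\ell$ large enough that $R_{\ell+1}$ is of strictly higher order than every gap, so the perfect-diagonalization step of \cite{ReissigSmith2005,Yagdjian1997} applies: a further bounded, uniformly invertible conjugation $I_3+\ml{O}(\cdot)$ annihilates the off-diagonal remainder and collects its diagonal contribution into the exact characteristic roots $\lambda_j(|\xi|)$, whose asymptotics are precisely those listed.

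With the system genuinely diagonal, one has $w^{(\ell)}(t,\xi)=\diag\big(e^{-\lambda_1(|\xi|)t},e^{-\lambda_2(|\xi|)t},e^{-\lambda_3(|\xi|)t}\big)w^{(\ell)}_0(\xi)$, and unwinding the chain of substitutions gives $w^{(0)}=T_{\sigma,\intt}\,w^{(\ell)}$ together with $w^{(\ell)}_0=T_{\sigma,\intt}^{-1}w^{(0)}_0$; substituting and multiplying by the cutoff $\chi_{\intt}(\xi)$ yields the asserted formula for \eqref{Eq. first-order Fourier law}. The exponentials are genuinely decaying because the roots have positive real part for small $|\xi|$: for $\sigma\in[0,1)$ the dominant term of $\lambda_3$ is $|\xi|^{2\sigma}>0$ and $\lambda_1,\lambda_2$ are real and positive, while for $\sigma\in(1,2]$ one has $\mathrm{Re}\,y_j>0$, consistent with the sign convention of the diagonal system.

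The main obstacle I anticipate is exactly the passage from the approximate diagonalization of the lemmas to the exact one. One must verify that after finitely many steps the remainder is dominated by the spectral gap uniformly in $|\xi|\le\varepsilon$, and that the final conjugation stays uniformly invertible so that it can be absorbed into $T_{\sigma,\intt}$ without spoiling the invertibility established above. Quantifying how many steps are needed amounts to comparing the remainder exponents $2(1-\sigma)\ell+2$ (respectively $2(\sigma-1)\ell+2\sigma$) against the minimal gap exponent, which is where the hypothesis $\sigma\neq1$ is essential; at $\sigma=1$ the steps no longer improve the order, forcing the separate treatment of Case 2.3.
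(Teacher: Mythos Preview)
Your proposal is correct and matches the paper's approach: the paper simply states that the proof is based on Lemmas \ref{Lemma 2.1} and \ref{Lemma 2.2}, and your write-up spells out exactly the argument those lemmas are meant to yield, including the uniform invertibility of $T_{\sigma,\intt}$ via Neumann series and the need to absorb the remaining off-diagonal terms through the further diagonalization steps that the lemmas defer to \cite{ReissigSmith2005,Yagdjian1997}. Your closing remark on why $\sigma=1$ must be excluded is also on point.
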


\vskip2mm

\begin{thm}\label{Thm. Rep. Sol. Large freq. Fourier}
	There exists a matrix $T_{\sigma,\extt}$ for $\sigma\in[0,1)\cup(1,2]$, which is uniformly invertible for large frequencies such that the following representation formula for the Cauchy problem \eqref{Eq. first-order Fourier law} with  $\sigma\in[0,1)\cup(1,2]$ holds:
	\begin{align*}
	\chi_{\extt}(\xi)w^{(0)}(t,\xi)=\chi_{\extt}(\xi)T_{\sigma,\extt}\diag\left(e^{-\lambda_1(|\xi|)t},e^{-\lambda_2(|\xi|)t},e^{-\lambda_3(|\xi|)t}\right)T^{-1}_{\sigma,\extt}w_0^{(0)}(\xi),
	\end{align*}
	where the characteristic roots $\lambda_j(|\xi|)$ for $j=1,2,3,$ have the following asymptotic behavior:
	\begin{itemize}
		\item if $\sigma\in[0,1)$, then we have
		\begin{align*}
		\lambda_1(|\xi|)=y_1|\xi|^2,\,\,\,\,\lambda_2(|\xi|)=y_2|\xi|^2,\,\,\,\,\lambda_3(|\xi|)=y_3|\xi|^2,
		\end{align*}
		modulo $\ml{O}\big(|\xi|^{2\sigma}\big)$, where $y_1,y_2,y_3$ are determined in \eqref{Value y1 y2 y3};
		\item if $\sigma\in(1,2]$, then we have
		\begin{align*}
		\lambda_1(|\xi|)=|\xi|^{4-2\sigma},\,\,\,\,\lambda_2(|\xi|)=|\xi|^2+|\xi|^{4-2\sigma},\,\,\,\,\lambda_{3}(|\xi|)=|\xi|^{2\sigma}-2|\xi|^{4-2\sigma},
		\end{align*}
		modulo $\ml{O}\big(|\xi|^{6-4\sigma}\big)$.
	\end{itemize}
\end{thm}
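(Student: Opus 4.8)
The plan is to read the representation off directly from the diagonalizations already performed in Lemmas \ref{Lemma 2.1} and \ref{Lemma 2.2}; the only genuinely new work is to promote the \emph{asymptotic} diagonalization of those lemmas to an \emph{exact} factorization of the solution operator, and to establish that the associated transformation is uniformly invertible for large frequencies. First I would recall that the unique solution to the first-order Cauchy problem \eqref{Eq. first-order Fourier law} is $w^{(0)}(t,\xi)=e^{-A(|\xi|;\sigma)t}w_0^{(0)}(\xi)$, so that multiplying by $\chi_{\extt}(\xi)$ reduces the whole claim to an analysis of $e^{-A(|\xi|;\sigma)t}$ on $Z_{\extt}(N)$.

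Next I would split according to $\sigma$. For $\sigma\in[0,1)$ with $\xi\in Z_{\extt}(N)$ the block $|\xi|^2A_0$ dominates, which is exactly the setting of Lemma \ref{Lemma 2.2} (Case 2.2); for $\sigma\in(1,2]$ with $\xi\in Z_{\extt}(N)$ the block $|\xi|^{2\sigma}A_1$ dominates, which is the setting of Lemma \ref{Lemma 2.1} (Case 2.1). In either case the cited lemma produces, via the product of elementary transformations $T_{\sigma,\extt}$ introduced in \eqref{T matrices Case 2.1} and \eqref{T matrices Case 2.2} and the further WKB steps indicated there, a system with coefficient matrix $\Lambda_0+\cdots+\Lambda_\ell+R_{\ell+1}$ whose leading part is diagonal with a remainder of strictly lower order. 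In particular the three characteristic roots separate at leading order: the $y_j|\xi|^2$ of \eqref{Value y1 y2 y3} are pairwise distinct, and the expressions $|\xi|^{4-2\sigma}$, $|\xi|^2+|\xi|^{4-2\sigma}$, $|\xi|^{2\sigma}-2|\xi|^{4-2\sigma}$ likewise separate once $|\xi|\geq N\gg1$. Hence $A(|\xi|;\sigma)$ has three simple eigenvalues on $Z_{\extt}(N)$ and is exactly diagonalizable there.

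The heart of the argument is to turn this simple spectrum into an exact identity $T_{\sigma,\extt}^{-1}A(|\xi|;\sigma)T_{\sigma,\extt}=\diag(\lambda_1,\lambda_2,\lambda_3)$ with a well-conditioned $T_{\sigma,\extt}$. I would argue that each elementary factor has the form $I_3+N_k(|\xi|)$ with $\|N_k(|\xi|)\|\to0$ as $|\xi|\to\infty$, the successive perturbations contracting by a fixed factor because the spectral gap of the leading matrix dominates the off-diagonal remainder on $Z_{\extt}(N)$. Consequently the (formally infinite) product defining $T_{\sigma,\extt}$ converges uniformly on $Z_{\extt}(N)$ to a matrix differing from its leading factor ($T_0$ in Case 2.2, $T_0T_1T_{1\frac{1}{2}}T_2$ in Case 2.1) by a quantity tending to $0$, so $\det T_{\sigma,\extt}$ is bounded away from $0$ uniformly in $\xi$; this is the uniform invertibility asserted in the statement. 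Exponentiating the diagonal form then gives $e^{-A(|\xi|;\sigma)t}=T_{\sigma,\extt}\diag\big(e^{-\lambda_1 t},e^{-\lambda_2 t},e^{-\lambda_3 t}\big)T_{\sigma,\extt}^{-1}$, and multiplication by $\chi_{\extt}(\xi)$ yields the claimed formula; the stated profiles of $\lambda_1,\lambda_2,\lambda_3$, modulo $\ml{O}\big(|\xi|^{2\sigma}\big)$ for $\sigma\in[0,1)$ and modulo $\ml{O}\big(|\xi|^{6-4\sigma}\big)$ for $\sigma\in(1,2]$, are read off from $\Lambda_0+\cdots+\Lambda_\ell$ in the respective lemma.

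The step I expect to be the main obstacle is exactly this uniform control of the diagonalizing product on the \emph{unbounded} zone $Z_{\extt}(N)$: one must verify that the spectral separation of the leading matrix (distinctness of the $y_j$, respectively of the three power-type roots) is quantitatively strong enough, relative to the size of the off-diagonal remainder, to make the sequence of conjugations a uniform contraction in $|\xi|$, so that $T_{\sigma,\extt}^{-1}$ remains bounded as $|\xi|\to\infty$. Once this gap-versus-remainder estimate is secured, the matrix-exponential factorization and the bookkeeping of error orders are routine.
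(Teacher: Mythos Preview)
Your proposal is correct and follows essentially the same route as the paper: the representation is read off directly from the diagonalizations of Lemmas \ref{Lemma 2.1} and \ref{Lemma 2.2}, with the case split $\sigma\in[0,1)$ versus $\sigma\in(1,2]$ on $Z_{\extt}(N)$ mapping to Case 2.2 and Case 2.1 respectively. The paper in fact offers no separate argument beyond citing those lemmas and asserting uniform invertibility; your discussion of why the finite product $I_3+N_k(|\xi|)$ with $\|N_k(|\xi|)\|\to0$ yields a uniformly well-conditioned $T_{\sigma,\extt}$ is a legitimate fleshing-out of what the paper leaves implicit.

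One minor remark: you frame the passage from the approximate diagonalization (with remainder $R_{\ell+1}$) to an exact one via a convergent infinite product of conjugations. The paper is less explicit here; its $T_{\sigma,\extt}$ is literally the finite product \eqref{T matrices Case 2.1} or \eqref{T matrices Case 2.2}, and the exact eigenvalues $\lambda_j(|\xi|)$ are understood to agree with the stated expressions only modulo the indicated error orders. Either reading---your infinite-product limit or the paper's ``finite product plus implicit exact eigenvector perturbation''---leads to the same conclusion, since the spectral separation at leading order guarantees that the exact diagonalizer differs from the finite product by a term vanishing as $|\xi|\to\infty$. Your identification of the gap-versus-remainder estimate as the only nontrivial point is accurate.
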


Lastly, considering \eqref{Eq. first-order Fourier law} with $\sigma=1$, from Lemma \ref{Lemma 2.3} we can derive the explicit representation of solutions in the following statement.

\vskip2mm

\begin{thm}\label{Thm. Rep. Sol. =1 freq. Fourier}
	There exists a matrix $T_{1,0}$, which is uniformly invertible for all frequencies such that the following representation formula for the Cauchy problem \eqref{Eq. first-order Fourier law} with $\sigma=1$ holds:
	\begin{align*}
	w^{(0)}(t,\xi)=T_{1,0}\diag\left(e^{-\lambda_1(|\xi|)t},e^{-\lambda_2(|\xi|)t},e^{-\lambda_3(|\xi|)t}\right)T^{-1}_{1,0}w_0^{(0)}(\xi),
	\end{align*}
	where the characteristic roots $\lambda_j(|\xi|)$ have the following explicit expressions:
	\begin{align*}
	\lambda_1(|\xi|)=y_4|\xi|^2,\,\,\,\,\lambda_2(|\xi|)=y_5|\xi|^2,\,\,\,\,\lambda_3(|\xi|)=y_6|\xi|^2,
	\end{align*}
	where $y_4,y_5,y_6$ are determined in \eqref{Value y4 y5 y6}.
\end{thm}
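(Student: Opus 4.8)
The plan is to read the representation formula directly off the diagonalization already carried out in Lemma \ref{Lemma 2.3}, so that Theorem \ref{Thm. Rep. Sol. =1 freq. Fourier} becomes essentially a corollary. The decisive structural feature is that for $\sigma=1$ the coefficient matrix factors as $A(|\xi|;1)=|\xi|^2(A_0+A_1)$, i.e. it is a scalar multiple of the constant matrix $A_0+A_1$. Because multiplication by the scalar $|\xi|^2$ does not alter eigenvectors, the diagonalizing matrix $T_{1,0}$ produced in Lemma \ref{Lemma 2.3} may be taken to be the constant matrix whose columns are the eigenvectors of $A_0+A_1$; in particular $T_{1,0}$ is independent of $\xi$. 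Since Lemma \ref{Lemma 2.3} guarantees $y_4\neq y_5\neq y_6$, the matrix $A_0+A_1$ has three distinct eigenvalues and is therefore diagonalizable, so $T_{1,0}$ is invertible, and being a constant matrix it is trivially uniformly invertible over all frequencies.

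Next I would solve the diagonalized system. Setting $w^{(1)}:=T_{1,0}^{-1}w^{(0)}$ as in Lemma \ref{Lemma 2.3} turns \eqref{Eq. first-order Fourier law} with $\sigma=1$ into the decoupled problem $w^{(1)}_t+\Lambda_0 w^{(1)}=0$ with $\Lambda_0=\diag(y_4|\xi|^2,y_5|\xi|^2,y_6|\xi|^2)$ and initial data $w^{(1)}(0,\xi)=T_{1,0}^{-1}w^{(0)}_0(\xi)$. Each scalar component is a linear autonomous ODE in $t$ with $\xi$ as a parameter, whose solution is $e^{-y_{j+3}|\xi|^2 t}$ times the corresponding component of the initial data; assembling the three components gives $w^{(1)}(t,\xi)=\diag(e^{-\lambda_1(|\xi|)t},e^{-\lambda_2(|\xi|)t},e^{-\lambda_3(|\xi|)t})\,T_{1,0}^{-1}w^{(0)}_0(\xi)$ with $\lambda_j(|\xi|)=y_{j+3}|\xi|^2$.

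Finally I would transform back by multiplying on the left by $T_{1,0}$, using $w^{(0)}=T_{1,0}w^{(1)}$, which yields precisely the claimed formula. There is no genuine obstacle in this argument: the only points requiring care are already settled in Lemma \ref{Lemma 2.3}, namely the distinctness of the roots $y_4,y_5,y_6$ (which secures diagonalizability) and the sign condition $\mathrm{Re}\,y_j>0$ (which, though not needed for the representation itself, justifies that the exponentials are genuinely decaying). The whole proof rests on the observation that the factorization $A(|\xi|;1)=|\xi|^2(A_0+A_1)$ removes any $\xi$-dependence from the transformation, so that, unlike the small- and large-frequency regimes treated in Lemmas \ref{Lemma 2.1} and \ref{Lemma 2.2}, no iterated multi-step diagonalization and no asymptotic remainder estimates are required, and the representation holds exactly and uniformly for every $\xi\in\mb{R}^n$.
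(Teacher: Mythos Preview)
Your proposal is correct and follows exactly the approach of the paper: the theorem is stated there as an immediate consequence of Lemma \ref{Lemma 2.3}, and your write-up simply makes explicit the routine steps (constancy and hence uniform invertibility of $T_{1,0}$, solving the decoupled ODE, transforming back) that the paper leaves implicit. There is nothing to add or correct.
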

\section{Some qualitative properties of solutions}\label{Section Qualitative properties}

In this section we derive smoothing effect of solutions and $L^2$ well-posedness for linear thermoelastic plate equations with friction or structural damping.

Let us study smoothing effect of solutions initially. 

\vskip2mm

\begin{thm}\label{Thm smoothing effect}
	Let us assume $\left(|D|^2 u_0,u_1,\theta_0\right)\in L^2(\mb{R}^n)\times L^2(\mb{R}^n)\times L^2(\mb{R}^n).$ Then, the solution to the Cauchy problem \eqref{Eq. thermo plate Fourier} with $\sigma\in[0,2)$ belongs to Gevrey spaces such that
	\begin{align*}
	\left(|D|^{s+2}u,|D|^su_t,|D|^s\theta\right)(t,\cdot)\in\Gamma^{\kappa}(\mb{R}^n)\times \Gamma^{\kappa}(\mb{R}^n)\times\Gamma^{\kappa}(\mb{R}^n)\,\,\,\,\text{for any}\,\,\,\,t>0,
	\end{align*}
	with $s\ge0$, where the parameter $\kappa=1$ when $\sigma\in\left[0,\frac{3}{2}\right]$, and $\kappa=\frac{1}{4-2\sigma}$ when $\sigma\in\left(\frac{3}{2},2\right)$.
\end{thm}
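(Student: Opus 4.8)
The plan is to reduce the Gevrey regularity to a pointwise decay estimate for the solution in the Fourier space and then to trace how fast the symbol decays for large frequencies. Since the recovered quantities satisfy $\widehat{|D|^{s+2}u}=\tfrac12|\xi|^{s}(w^{(0)}_1-w^{(0)}_2)$, $\widehat{|D|^{s}u_t}=\tfrac12|\xi|^{s}(w^{(0)}_1+w^{(0)}_2)$ and $\widehat{|D|^{s}\theta}=|\xi|^{s}w^{(0)}_3$, it suffices to show that $\exp\big(c\langle\xi\rangle^{1/\kappa}\big)|\xi|^{s}w^{(0)}(t,\cdot)\in L^2(\mb{R}^n)$ for some $c>0$ and every $t>0$. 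On the zones $Z_{\intt}(\varepsilon)\cup Z_{\midd}(\varepsilon,N)$ the weight $\exp\big(c\langle\xi\rangle^{1/\kappa}\big)|\xi|^{s}$ is bounded, so the contribution there is controlled by $\|w^{(0)}_0\|_{L^2}\lesssim\|(|D|^2u_0,u_1,\theta_0)\|_{L^2}$ and nothing has to be proved. Hence the whole matter concentrates on $Z_{\extt}(N)$.

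On the large-frequency zone I would insert the representation of Theorem \ref{Thm. Rep. Sol. Large freq. Fourier} (and, for $\sigma=1$, Theorem \ref{Thm. Rep. Sol. =1 freq. Fourier}). Using the uniform invertibility of $T_{\sigma,\extt}$, this yields the pointwise bound
\begin{align*}
\big|\chi_{\extt}(\xi)w^{(0)}(t,\xi)\big|\lesssim e^{-(\min_j\mathrm{Re}\,\lambda_j(|\xi|))\,t}\big|w^{(0)}_0(\xi)\big|,\qquad \xi\in Z_{\extt}(N).
\end{align*}
The task is then to read off the growth of $\min_j\mathrm{Re}\,\lambda_j(|\xi|)$ from the asymptotic lists. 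For $\sigma\in[0,1]$ all three roots behave like $\mathrm{Re}\,y_j\,|\xi|^2$ with $\mathrm{Re}\,y_j>0$, so $\min_j\mathrm{Re}\,\lambda_j(|\xi|)\gtrsim|\xi|^2$. For $\sigma\in(1,2)$ the three roots grow like $|\xi|^{4-2\sigma}$, $|\xi|^{2}$ and $|\xi|^{2\sigma}$ respectively; since $\sigma>1$ forces $6-4\sigma<4-2\sigma<2<2\sigma$, the remainder $\ml{O}\big(|\xi|^{6-4\sigma}\big)$ is genuinely of lower order, the slowest root is $\lambda_1$, and $\min_j\mathrm{Re}\,\lambda_j(|\xi|)\gtrsim|\xi|^{4-2\sigma}$ for $|\xi|\geq N$.

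It remains to balance this exponential decay against the Gevrey weight. Writing $\alpha:=\min\{2;4-2\sigma\}$ for the decay exponent, I would estimate, for $|\xi|\geq N$,
\begin{align*}
\exp\big(2c\langle\xi\rangle^{1/\kappa}\big)|\xi|^{2s}\big|w^{(0)}(t,\xi)\big|^2\lesssim \exp\big(2c\langle\xi\rangle^{1/\kappa}-c_1t|\xi|^{\alpha}\big)|\xi|^{2s}\big|w^{(0)}_0(\xi)\big|^2,
\end{align*}
with $c_1>0$. The polynomial factor $|\xi|^{2s}$ is harmless because $\alpha>0$. Choosing $\kappa$ so that $1/\kappa\leq\alpha$ makes the exponent bounded from above once $c$ is small enough relative to $t$ (for $1/\kappa<\alpha$ any $c$ works, while $1/\kappa=\alpha$ needs $c<c_1t/2$), and the right-hand side is then integrable against $w^{(0)}_0\in L^2$. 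For $\sigma\in[0,3/2]$ one has $\alpha\geq1$, so $\kappa=1$ is admissible; for $\sigma\in(3/2,2)$ one has $\alpha=4-2\sigma<1$, forcing the sharp choice $\kappa=\tfrac1{4-2\sigma}$, which is exactly the claimed value.

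Finally, I would stress that the only genuinely delicate point is the reading of $\min_j\mathrm{Re}\,\lambda_j$: one must ensure that the slowest characteristic root has positive real part (decay rather than growth) and that its exponent is captured with a uniform positive constant for all $|\xi|\geq N$, which is precisely where the lower-order nature of $\ml{O}\big(|\xi|^{6-4\sigma}\big)$ enters. Everything else is bookkeeping. The exclusion of $\sigma=2$ is natural, since there $4-2\sigma=0$ and the slowest root ceases to grow, so the frequency-dependent smoothing disappears; consistently $\kappa=\tfrac1{4-2\sigma}\to\infty$ as $\sigma\to2^-$.
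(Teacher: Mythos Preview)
Your proposal is correct and follows essentially the same route as the paper: reduce to the large-frequency zone, invoke the representation from Theorem~\ref{Thm. Rep. Sol. Large freq. Fourier} (and Theorem~\ref{Thm. Rep. Sol. =1 freq. Fourier} for $\sigma=1$) to extract the pointwise bound $\chi_{\extt}(\xi)|\xi|^s|w^{(0)}(t,\xi)|\lesssim |\xi|^s e^{-c|\xi|^{\alpha}t}|w_0^{(0)}(\xi)|$ with $\alpha=2$ for $\sigma\in[0,1]$ and $\alpha=4-2\sigma$ for $\sigma\in(1,2)$, and then match the Gevrey index $\kappa$ against $\alpha$. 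The paper states exactly this estimate and then defers the final balancing step to \cite{Reissig2016}, whereas you have spelled that step out explicitly; otherwise the arguments coincide.
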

\begin{proof}To understand Gevrey smoothing of the solution, we only need to study the regularity properties of the solution for frequencies in the large zone $Z_{\extt}(N)$. From Theorem \ref{Thm. Rep. Sol. Large freq. Fourier} we may estimate
	\begin{align*}
	\chi_{\extt}(\xi)|\xi|^s\big|w^{(0)}(t,\xi)\big|\lesssim
	\left\{
	\begin{aligned}
	&\chi_{\extt}(\xi)|\xi|^se^{-|\xi|^2t}\big|w_0^{(0)}(\xi)\big|&&\text{if}\,\,\,\,\sigma\in[0,1],\\
	&\chi_{\extt}(\xi)|\xi|^se^{-|\xi|^{4-2\sigma}t}\big|w_0^{(0)}(\xi)\big|&&\text{if}\,\,\,\,\sigma\in(1,2).
	\end{aligned}
	\right.
	\end{align*}
	When we take the parameter $\kappa$ in Gevrey spaces $\Gamma^{\kappa}(\mb{R}^n)$ such that $\kappa=1$ if $\sigma\in\left[0,\frac{3}{2}\right]$, and $\kappa=\frac{1}{4-2\sigma}$ if $\sigma\in\left(\frac{3}{2},2\right)$, they lead to
	\begin{align*}
	\ml{F}^{-1}_{\xi\rightarrow x}\left(|\xi|^sw^{(0)}(t,\xi)\right)(t,\cdot)\in \Gamma^{\kappa}(\mb{R}^n)\,\,\,\,\text{for any}\,\,\,\,t>0.
	\end{align*}
	One can complete the proof strictly following the paper \cite{Reissig2016}.
\end{proof}

\vskip2mm

\begin{rem}
	We notice that for the Cauchy problem \eqref{Eq. thermo plate Fourier} with $\sigma\in\left[0,\frac{3}{2}\right]$, the solution belongs to the Gevrey space $\Gamma^1(\mb{R}^n)$, which means analytic smoothing of the solution.
\end{rem}

\vskip2mm

\begin{rem}
	Let us consider the Cauchy problem \eqref{Eq. thermo plate Fourier} with $\sigma=2$. The representation of solutions for large frequencies from Theorem \ref{Thm. Rep. Sol. Large freq. Fourier} implies for $s\geq0$ that
	\begin{align*}
	\chi_{\extt}(\xi)|\xi|^s\big|w^{(0)}(t,\xi)\big|\lesssim \chi_{\extt}(\xi)e^{-t}|\xi|^s\big|w_0^{(0)}(\xi)\big|.
	\end{align*}
	Thus, the solution does not belong to Gevrey spaces $\Gamma^{\kappa}(\mb{R}^n)$ with $\kappa\in[1,\infty)$.
\end{rem}

\vskip2mm

\begin{rem}
	The statement of Theorem \ref{Thm smoothing effect} tells us that the thermal dissipation generated by Fourier's law has a dominant influence in comparison with friction and structural damping only if $\sigma\in\left[0,\frac{3}{2}\right]$ on Gevrey smoothing. We remark that the threshold of Gevrey smoothing of solutions is $\sigma=\frac{3}{2}$.
\end{rem}
\vskip2mm
After applying the representations of solutions from Theorems \ref{Thm. Rep. Sol. Small freq. Fourier}, \ref{Thm. Rep. Sol. Large freq. Fourier} and \ref{Thm. Rep. Sol. =1 freq. Fourier}, we immediately prove the following $L^2$ well-posedness for the Cauchy problem \eqref{Eq. thermo plate Fourier}.

\vskip2mm

\begin{thm} Let us assume $\left(|D|^2 u_0,u_1,\theta_0\right)\in L^2(\mb{R}^n)\times L^2(\mb{R}^n)\times L^2(\mb{R}^n).$ Then, there exists a uniquely determined solution to the Cauchy problem \eqref{Eq. thermo plate Fourier} with $\sigma\in[0,2]$, which satisfies 
	\begin{align*}
	u\in\ml{C}\left([0,\infty),\dot{H}^2(\mb{R}^n)\right),\quad u_t\in\ml{C}\left([0,\infty),L^2(\mb{R}^n)\right),\quad\theta\in\ml{C}\left([0,\infty),L^2(\mb{R}^n)\right).
	\end{align*}
\end{thm}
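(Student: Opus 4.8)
The plan is to reduce the $L^2$ well-posedness assertion to the representation formulas already established in Theorems \ref{Thm. Rep. Sol. Small freq. Fourier}, \ref{Thm. Rep. Sol. Large freq. Fourier} and \ref{Thm. Rep. Sol. =1 freq. Fourier}, together with the exponential decay in the bounded zone from Lemma \ref{Lemma 2.4}. The key observation is that $w^{(0)}(t,\xi)=\ml{F}_{x\to\xi}(U(t,x))$, and by Plancherel's theorem the quantities $\||D|^2u\|_{L^2}$, $\|u_t\|_{L^2}$, $\|\theta\|_{L^2}$ are controlled by $\|w^{(0)}(t,\cdot)\|_{L^2}$, since the vector $w^{(0)}$ recombines to $\hat{u}_t$, $|\xi|^2\hat{u}$ and $\hat{\theta}$ via a fixed invertible linear map. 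Thus it suffices to show that the solution operator $w^{(0)}_0(\xi)\mapsto w^{(0)}(t,\xi)$ is uniformly bounded on $L^2$ in $\xi$ for each fixed $t\geq0$, and that $t\mapsto w^{(0)}(t,\cdot)$ is continuous in the $L^2$ norm.

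First I would assemble the pointwise-in-$\xi$ bound $|w^{(0)}(t,\xi)|\lesssim|w^{(0)}_0(\xi)|$, uniformly for $t\geq0$, by splitting $\mb{R}^n$ into the three zones via the cutoffs $\chi_{\intt},\chi_{\midd},\chi_{\extt}$. In $Z_{\intt}(\varepsilon)$ and $Z_{\extt}(N)$ the representation formulas express $w^{(0)}$ through the uniformly invertible diagonalizers $T_{\sigma,\intt}$, $T_{\sigma,\extt}$ (respectively $T_{1,0}$ for $\sigma=1$) and the diagonal propagator $\diag(e^{-\lambda_j(|\xi|)t})$; since $\mathrm{Re}\,\lambda_j(|\xi|)\geq0$ for small and large frequencies (the leading terms $|\xi|^{4-2\sigma}$, $|\xi|^2$, $|\xi|^{2\sigma}$, or $y_j|\xi|^2$ with $\mathrm{Re}\,y_j>0$, all being nonnegative), each exponential factor is bounded by $1$, and uniform invertibility of the transformation matrices yields $|w^{(0)}(t,\xi)|\lesssim|w^{(0)}_0(\xi)|$ there. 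In $Z_{\midd}(\varepsilon,N)$ Lemma \ref{Lemma 2.4} gives the even stronger bound $|w^{(0)}(t,\xi)|\lesssim e^{-ct}|w^{(0)}_0(\xi)|$ for $\sigma\neq1$, while for $\sigma=1$ Theorem \ref{Thm. Rep. Sol. =1 freq. Fourier} already covers all frequencies. Integrating the squared bound over $\xi$ gives $\|w^{(0)}(t,\cdot)\|_{L^2}\lesssim\|w^{(0)}_0\|_{L^2}$ uniformly in $t\geq0$, which by Plancherel furnishes exactly the membership $u\in\dot{H}^2$, $u_t\in L^2$, $\theta\in L^2$ at each time, given the hypothesis $(|D|^2u_0,u_1,\theta_0)\in L^2\times L^2\times L^2$ (equivalently $w^{(0)}_0\in L^2$).

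Next I would establish the temporal continuity $w^{(0)}\in\ml{C}([0,\infty),L^2)$. For each fixed $\xi$ the map $t\mapsto w^{(0)}(t,\xi)$ is continuous, being built from exponentials $e^{-\lambda_j(|\xi|)t}$ and constant matrices; combined with the uniform dominating bound $|w^{(0)}(t,\xi)|\lesssim|w^{(0)}_0(\xi)|\in L^2$, the dominated convergence theorem upgrades pointwise continuity to $L^2$-norm continuity in $t$. Uniqueness follows from the fact that the first-order system \eqref{Eq. first-order Fourier law} is a linear ODE in $t$ for each $\xi$, so vanishing initial data forces $w^{(0)}\equiv0$.

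The main obstacle is purely bookkeeping rather than conceptual: one must verify that the three zonal representations, each valid only on its own support, patch together into a single globally bounded propagator, and in particular that the leading real parts $\mathrm{Re}\,\lambda_j(|\xi|)$ are genuinely nonnegative including the sign of the remainder terms near the zone boundaries $|\xi|=\varepsilon$ and $|\xi|=N$. This is guaranteed by the positivity $\mathrm{Re}\,y_j>0$ noted in Lemmas \ref{Lemma 2.2} and \ref{Lemma 2.3} and by the contradiction argument of Lemma \ref{Lemma 2.4} ruling out purely imaginary roots in the bounded zone, so no decay is lost at the interfaces. Since the estimates are uniform in $t$, no growth can arise, and the proof reduces to invoking these already-established facts; hence the theorem follows immediately, as the statement indicates.
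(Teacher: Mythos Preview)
Your proposal is correct and follows essentially the same approach as the paper: the paper simply states that the theorem follows immediately from the representation formulas in Theorems \ref{Thm. Rep. Sol. Small freq. Fourier}, \ref{Thm. Rep. Sol. Large freq. Fourier}, \ref{Thm. Rep. Sol. =1 freq. Fourier}, and you have spelled out exactly how (zonal pointwise bounds, Plancherel, dominated convergence for continuity in $t$, and linearity of the $\xi$-wise ODE for uniqueness). Your argument is a faithful and somewhat more detailed execution of what the paper leaves implicit.
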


\vskip2mm

\begin{rem}
	One also can derive $H^s$ well-posedness for the Cauchy problem \eqref{Eq. thermo plate Fourier} for all $s\in\mb{R}$.
\end{rem}

\section{Estimates for solutions}\label{Subsec Fouier estimate}

This section mainly develops some estimates for solutions to linear thermoelastic plate equations with different damping mechanisms in $\mb{R}^n$, $n\geq1$. The section is organized as follows. First of all, by using phase space analysis and the representations of solutions stated in Theorems \ref{Thm. Rep. Sol. Small freq. Fourier}, \ref{Thm. Rep. Sol. Large freq. Fourier}, \ref{Thm. Rep. Sol. =1 freq. Fourier}, we derive estimates of solutions to \eqref{Eq. thermo plate Fourier} with initial data belonging to $H^s(\mb{R}^n)\cap L^m(\mb{R}^n)$ for $s\geq0$ and $m\in[1,2]$. Moreover, in spired of \cite{HorbachIkehataCharao2016,Ikehata2004,Ikehata2014}, we investigate estimates of solutions to \eqref{Eq. thermo plate Fourier} with initial data taking from wighted $L^1$ spaces, i.e., $H^s(\mb{R}^n)\cap L^{1,\delta}(\mb{R}^n)$ for $s\geq0$ and $\delta\in(0,1]$. Eventually, we study $L^p-L^q$ estimates not necessary on the conjugate line with the aid of some applications of $L^r$ estimates for oscillating integrals.

\subsection{Energy estimates}
Before stating our main results, let us denote the parameters for $n\geq1$, $s\geq0$ and $m\in[1,2]$ by the following way:
\begin{align*}
\gamma(\sigma,n,m,s):=\left\{\begin{aligned}
&\frac{(2-m)n+2ms}{4m(2-\sigma)}&&\text{if}\,\,\,\,\sigma\in[0,1),\\
&\frac{(2-m)n+2ms}{4m}&&\text{if}\,\,\,\,\sigma\in[1,2].
\end{aligned}\right.
\end{align*}
It will be used to described the decay rate of the energy estimates later.

Additionally, we define the function spaces $\ml{A}_{m,s}(\mb{R}^n)$ for $s\geq0$ and $m\in[1,2]$
\begin{align*}
\ml{A}_{m,s}(\mb{R}^n):=\left(H^s(\mb{R}^n)\cap L^m(\mb{R}^n)\right)\times \left(H^s(\mb{R}^n)\cap L^m(\mb{R}^n)\right)\times \left(H^s(\mb{R}^n)\cap L^m(\mb{R}^n)\right),
\end{align*}
and the function spaces $\ml{B}_{\delta,s}(\mb{R}^n)$ for $s\geq0$ and $\delta\in[0,1]$
\begin{align*}
\ml{B}_{\delta,s}(\mb{R}^n):=\left(H^s(\mb{R}^n)\cap L^{1,\delta}(\mb{R}^n)\right)\times \left(H^s(\mb{R}^n)\cap L^{1,\delta}(\mb{R}^n)\right)\times \left(H^s(\mb{R}^n)\cap L^{1,\delta}(\mb{R}^n)\right),
\end{align*}
carrying their corresponding norms.

\vskip2mm

\begin{thm} \label{L2-L2 esitmates Fourier law}Let us assume $\left(|D|^2u_0,u_1,\theta_0\right)\in \ml{A}_{2,s}(\mb{R}^n)$. Then, the solution to the Cauchy problem \eqref{Eq. thermo plate Fourier} with $\sigma\in[0,2]$ satisfies the following estimates:
	\begin{align*}
	\left\||D|^2u(t,\cdot)\right\|_{\dot{H}^s(\mb{R}^n)}&+\left\|u_t(t,\cdot)\right\|_{\dot{H}^s(\mb{R}^n)}+\|\theta(t,\cdot)\|_{\dot{H}^s(\mb{R}^n)}\\
	&\lesssim(1+t)^{-\gamma(\sigma,n,2,s)}\left\|\left(|D|^2u_0,u_1,\theta_0\right)\right\|_{\ml{A}_{2,s}(\mb{R}^n)}.
	\end{align*}
\end{thm}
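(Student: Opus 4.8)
The plan is to collapse the three terms on the left-hand side into a single weighted $L^2$ norm of the Fourier-side vector $w^{(0)}$ and then exploit the representation formulas zone by zone. First I would note that, by Plancherel's theorem and \eqref{Solution}, one has $\|U(t,\cdot)\|_{\dot{H}^s(\mb{R}^n)}=\big\||\xi|^s w^{(0)}(t,\xi)\big\|_{L^2}$. Since $|D|^2u=\tfrac12\big((u_t+|D|^2u)-(u_t-|D|^2u)\big)$, $u_t=\tfrac12\big((u_t+|D|^2u)+(u_t-|D|^2u)\big)$, and $\theta$ is the third component of $U$, the left-hand side is comparable to $\big\||\xi|^s w^{(0)}(t,\xi)\big\|_{L^2}$. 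The same invertible linear relation shows $\|w^{(0)}_0\|_{H^s}$ is comparable to $\big\|(|D|^2u_0,u_1,\theta_0)\big\|_{\ml{A}_{2,s}(\mb{R}^n)}$ (recall that for $m=2$ the space $\ml{A}_{2,s}$ is just a product of $H^s$ spaces). Thus it suffices to prove $\big\||\xi|^s w^{(0)}(t,\cdot)\big\|_{L^2}\lesssim(1+t)^{-\gamma(\sigma,n,2,s)}\|w^{(0)}_0\|_{H^s}$.

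I would then split $\mb{R}^n$ into $Z_{\intt}(\varepsilon)$, $Z_{\midd}(\varepsilon,N)$, $Z_{\extt}(N)$ via the cut-offs $\chi_{\intt},\chi_{\midd},\chi_{\extt}$ and estimate each piece. On the small zone I use Theorem \ref{Thm. Rep. Sol. Small freq. Fourier} (and Theorem \ref{Thm. Rep. Sol. =1 freq. Fourier} when $\sigma=1$) together with the uniform invertibility of $T_{\sigma,\intt}$ to extract the pointwise bound $|w^{(0)}(t,\xi)|\lesssim e^{-c|\xi|^{2\max\{2-\sigma;1\}}t}\,|w^{(0)}_0(\xi)|$, the exponent being governed by the slowest-decaying characteristic root (namely $\lambda_1=|\xi|^{4-2\sigma}$ for $\sigma\in[0,1)$, and the root of size $|\xi|^2$ with positive real part for $\sigma\in[1,2]$). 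Pulling the scalar factor out through $\sup_{|\xi|\le\varepsilon}|\xi|^{2s}e^{-2c|\xi|^{2\max\{2-\sigma;1\}}t}\lesssim(1+t)^{-\frac{s}{\max\{2-\sigma;1\}}}$ and using $\|w^{(0)}_0\|_{L^2(Z_{\intt})}\le\|w^{(0)}_0\|_{L^2}$ bounds the squared $\dot{H}^s$ norm on this zone by $(1+t)^{-\frac{s}{\max\{2-\sigma;1\}}}\|w^{(0)}_0\|^2_{L^2}$; taking the square root reproduces exactly $(1+t)^{-\gamma(\sigma,n,2,s)}$.

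For the bounded zone I invoke Lemma \ref{Lemma 2.4} (and, for $\sigma=1$, Theorem \ref{Thm. Rep. Sol. =1 freq. Fourier}) to get $|w^{(0)}(t,\xi)|\lesssim e^{-ct}|w^{(0)}_0(\xi)|$; since $|\xi|^{2s}$ is bounded on $Z_{\midd}(\varepsilon,N)$, the corresponding integral is $\lesssim e^{-2ct}\|w^{(0)}_0\|^2_{L^2}$. For the large zone I use Theorem \ref{Thm. Rep. Sol. Large freq. Fourier} (and Theorem \ref{Thm. Rep. Sol. =1 freq. Fourier}): the slowest root decays like $e^{-c|\xi|^{\min\{2;4-2\sigma\}}t}$, so on $|\xi|\ge N$ I estimate $e^{-2c|\xi|^{\min\{2;4-2\sigma\}}t}\le e^{-c't}$ while keeping the weight $|\xi|^{2s}$ inside the integral, obtaining $\lesssim e^{-c't}\|w^{(0)}_0\|^2_{H^s}$. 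Collecting the three contributions, the polynomial rate from the small zone dominates the two exponentially decaying ones, which finishes the proof.

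The main obstacle I anticipate is purely the bookkeeping of the small-zone analysis: identifying, for each subrange of $\sigma$, precisely which characteristic root is slowest and then verifying that $\sup_{\xi}|\xi|^{2s}e^{-c|\xi|^{2\max\{2-\sigma;1\}}t}$ produces exactly the exponent $\gamma(\sigma,n,2,s)$. It is worth emphasizing that for $m=2$ the dimensional term $(2-m)n$ vanishes, so there is no $L^1$-type low-frequency gain and the decay is driven solely by the regularity weight; this is why the rate depends on $s$ but not on $n$. Finally, minor care is needed at the threshold $\sigma=1$, where the roots behave like $|\xi|^2$ uniformly and the small- and large-frequency estimates must be matched through Theorem \ref{Thm. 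Rep. Sol. =1 freq. Fourier}, and at $\sigma=2$, where the large-zone exponent $4-2\sigma$ degenerates to zero yet still yields exponential-in-$t$ decay on $Z_{\extt}(N)$.
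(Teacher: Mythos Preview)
Your proposal is correct and follows the same route as the paper, which compresses the whole argument into the single sentence ``We can complete the proof by applying the Parseval--Plancherel theorem''; you have simply unpacked that sentence by carrying out the zone-by-zone pointwise bounds coming from Theorems \ref{Thm. Rep. Sol. Small freq. Fourier}, \ref{Thm. Rep. Sol. Large freq. Fourier}, \ref{Thm. Rep. Sol. =1 freq. Fourier} and Lemma \ref{Lemma 2.4} and then taking $L^2$ norms. The identification of the slowest small-frequency root, the computation $\sup_{|\xi|\le\varepsilon}|\xi|^{2s}e^{-2c|\xi|^{2\max\{2-\sigma;1\}}t}\lesssim(1+t)^{-s/\max\{2-\sigma;1\}}$, and the observation that the $(2-m)n$ term vanishes for $m=2$ are all correct, so the decay rate matches $\gamma(\sigma,n,2,s)$ exactly.
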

\begin{proof}
	We can complete the proof of Theorem \ref{L2-L2 esitmates Fourier law} by applying the Parseval-Plancherel theorem.
\end{proof}

\vskip2mm

\begin{rem}
	Let us assume $\left(|D|^2u_0,u_1,\theta_0\right)\in\dot{H}^s(\mb{R}^n)\times\dot{H}^s(\mb{R}^n)\times \dot{H}^s(\mb{R}^n)$ for $s\geq0$. Then, the solution satisfies the following bounded estimates:
	\begin{align*}
	\left\||D|^2u(t,\cdot)\right\|_{\dot{H}^s(\mb{R}^n)}&+\left\|u_t(t,\cdot)\right\|_{\dot{H}^s(\mb{R}^n)}+\|\theta(t,\cdot)\|_{\dot{H}^s(\mb{R}^n)}\\
	&\lesssim\left\|\left(|D|^2u_0,u_1,\theta_0\right)\right\|_{\dot{H}^s(\mb{R}^n)\times \dot{H}^s(\mb{R}^n)\times \dot{H}^s(\mb{R}^n)}.
	\end{align*}
\end{rem}
Next, we consider initial data taking from $H^s$ with additional regularity $L^m$, $m\in[1,2)$, which implies an additional decay in the corresponding estimates.

\vskip2mm

\begin{thm} \label{Lm-L2 esitmates Fourier law}Let us assume $\left(|D|^2u_0,u_1,\theta_0\right)\in \ml{A}_{m,s}(\mb{R}^n)$, where $s\geq0$ and $m\in[1,2)$. Then, the solution to the Cauchy problem \eqref{Eq. thermo plate Fourier} with $\sigma\in[0,2]$ satisfies the next estimates:
	\begin{align*}
	\left\||D|^2u(t,\cdot)\right\|_{\dot{H}^s(\mb{R}^n)}&+\left\|u_t(t,\cdot)\right\|_{\dot{H}^s(\mb{R}^n)}+\|\theta(t,\cdot)\|_{\dot{H}^s(\mb{R}^n)}\\
	&\lesssim(1+t)^{-\gamma(\sigma,n,m,s)}\left\|\left(|D|^2u_0,u_1,\theta_0\right)\right\|_{\ml{A}_{m,s}(\mb{R}^n)}.
	\end{align*}
\end{thm}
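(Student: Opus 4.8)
The plan is to transfer everything to a weighted $L^2$-estimate in the Fourier space for the vector $w^{(0)}$. Since each component of $\big(|D|^2u,u_t,\theta\big)$ is a fixed linear combination of the components of $U$ (whose partial Fourier transform is $w^{(0)}$), and the same holds for the data $U_0$ versus $\big(|D|^2u_0,u_1,\theta_0\big)$, so that $\|w_0^{(0)}\|_{(H^s\cap L^m)}\approx\big\|\big(|D|^2u_0,u_1,\theta_0\big)\big\|_{\ml{A}_{m,s}}$, the Parseval--Plancherel theorem reduces the assertion to
\begin{align*}
\big\||\xi|^s w^{(0)}(t,\cdot)\big\|_{L^2(\mb{R}^n)}\lesssim (1+t)^{-\gamma(\sigma,n,m,s)}\,\big\|w_0^{(0)}\big\|_{(H^s\cap L^m)(\mb{R}^n)}.
\end{align*}
I then split the left-hand side over the three zones $Z_{\intt}(\varepsilon)$, $Z_{\midd}(\varepsilon,N)$, $Z_{\extt}(N)$, expecting (as in the proof of Theorem \ref{L2-L2 esitmates Fourier law}) that the small zone dictates the polynomial rate while the other two contribute only exponentially small terms.

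For the small zone I would insert the representations of Theorems \ref{Thm. Rep. Sol. Small freq. Fourier}, \ref{Thm. Rep. Sol. Large freq. Fourier} and \ref{Thm. Rep. Sol. =1 freq. Fourier} (for $\sigma\in[0,1)$, $\sigma\in(1,2]$, $\sigma=1$ respectively) and retain only the slowest-decaying exponential. For $\sigma\in[0,1)$ the three roots behave like $|\xi|^{4-2\sigma}$, $|\xi|^2$, $|\xi|^{2\sigma}$, and since $4-2\sigma$ is the largest exponent the dominant factor for $|\xi|\ll1$ is $e^{-c|\xi|^{4-2\sigma}t}$; for $\sigma\in[1,2]$ all roots behave like $|\xi|^2$ and the dominant factor is $e^{-c|\xi|^2t}$. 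Writing $k:=2\max\{2-\sigma;1\}$ (so $k=4-2\sigma$ for $\sigma<1$ and $k=2$ otherwise), the small-zone term is bounded by $\big\|\chi_{\intt}(\xi)|\xi|^s e^{-c|\xi|^k t}\,\hat w_0\big\|_{L^2}$. I next apply H\"older's inequality with $\tfrac12=\tfrac1r+\tfrac1{m'}$, $m'=\tfrac{m}{m-1}$, to separate the multiplier from the data, and invoke the Hausdorff--Young inequality $\|\hat w_0\|_{L^{m'}}\lesssim\|w_0\|_{L^m}$ (valid for $m\in[1,2)$); this reduces matters to the multiplier norm $\big\|\chi_{\intt}|\xi|^s e^{-c|\xi|^k t}\big\|_{L^r}$ with $\tfrac1r=\tfrac1m-\tfrac12=\tfrac{2-m}{2m}$.

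The core computation is the scaling estimate for this multiplier. Passing to polar coordinates and substituting $\tau=\rho\,t^{1/k}$ gives
\begin{align*}
\big\|\chi_{\intt}|\xi|^s e^{-c|\xi|^k t}\big\|_{L^r}^r\lesssim\int_0^\varepsilon\rho^{sr+n-1}e^{-cr\rho^k t}\,d\rho\lesssim t^{-\frac{sr+n}{k}}\int_0^\infty\tau^{sr+n-1}e^{-cr\tau^k}\,d\tau,
\end{align*}
where the remaining integral is finite. Hence the multiplier norm is $\lesssim t^{-(s+n/r)/k}$, and inserting $\tfrac1r=\tfrac{2-m}{2m}$ one checks that
\begin{align*}
\frac{s+n/r}{k}=\frac{2ms+(2-m)n}{2mk}=\frac{(2-m)n+2ms}{4m\max\{2-\sigma;1\}}=\gamma(\sigma,n,m,s),
\end{align*}
reproducing both branches of $\gamma$. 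This yields the rate for $t\gg1$; together with the trivial uniform bound for bounded $t$ it gives the factor $(1+t)^{-\gamma}$.

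It remains to dispose of the bounded and large zones. In $Z_{\midd}(\varepsilon,N)$ Lemma \ref{Lemma 2.4} furnishes $|w^{(0)}(t,\xi)|\lesssim e^{-ct}|w_0^{(0)}(\xi)|$, and, the zone being compact and bounded away from the origin, the passage $L^m\hookrightarrow L^2$ there is harmless. In $Z_{\extt}(N)$ the representations give $e^{-\lambda(|\xi|)t}\le e^{-ct}$ for $|\xi|\ge N$ (with $\lambda\sim|\xi|^2$ for $\sigma\in[0,1)$, $\lambda\sim|\xi|^{4-2\sigma}$ for $\sigma\in(1,2)$, and $\lambda\sim1$ for $\sigma=2$), so one simply estimates in $L^2$ against the $\dot H^s$ part of the data. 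Both contributions are $\lesssim e^{-ct}\|w_0^{(0)}\|_{(H^s\cap L^m)}$ and are absorbed by the small-zone rate. I expect the main obstacle to be precisely the small-zone scaling: one must carry the conjugate exponents correctly, check the convergence of the residual integral, and verify that $(s+n/r)/k$ collapses exactly to $\gamma(\sigma,n,m,s)$ on both branches.
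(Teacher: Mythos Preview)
Your proposal is correct and follows essentially the same route as the paper: split into the three zones, use H\"older's inequality with exponent $r=\tfrac{2m}{2-m}$ together with Hausdorff--Young on the small zone to reduce to the multiplier norm $\|\chi_{\intt}|\xi|^s e^{-c|\xi|^{k}t}\|_{L^r}$ (with $k=4-2\sigma$ for $\sigma\in[0,1)$ and $k=2$ for $\sigma\in[1,2]$), compute that norm by the polar/scaling substitution, and control the bounded and large zones by the exponential decay coming from Lemma~\ref{Lemma 2.4} and the large-frequency representations. Your verification that $(s+n/r)/k$ collapses to $\gamma(\sigma,n,m,s)$ is exactly the computation the paper carries out.
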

\begin{proof}
	For frequencies in the small zone, we apply H\"older's inequality and the Hausdorff-Young inequality to get the following estimates:
	\begin{align*}
	&\left\|\chi_{\intt}(\xi)|\xi|^sw^{(0)}(t,\xi)\right\|_{L^2(\mb{R}^n)}\\
	&\qquad\quad\lesssim
	\left\{
	\begin{aligned}
	&\left\|\chi_{\intt}(\xi)|\xi|^se^{-c|\xi|^{4-2\sigma}t}\right\|_{L^{\frac{2m}{2-m}}(\mb{R}^n)}\left\|\ml{F}^{-1}\left(w^{(0)}_0\right)\right\|_{L^m(\mb{R}^n)}&&\text{if}\,\,\,\,\sigma\in[0,1),\\
	&\left\|\chi_{\intt}(\xi)|\xi|^se^{-c|\xi|^{2}t}\right\|_{L^{\frac{2m}{2-m}}(\mb{R}^n)}\left\|\ml{F}^{-1}\left(w^{(0)}_0\right)\right\|_{L^m(\mb{R}^n)}&&\text{if}\,\,\,\,\sigma\in[1,2],
	\end{aligned}
	\right.\\
	&\qquad\quad\lesssim\left\{\begin{aligned}
	&(1+t)^{-\frac{(2-m)n+2ms}{4m(2-\sigma)}}\left\|\ml{F}^{-1}\left(w^{(0)}_0\right)\right\|_{L^m(\mb{R}^n)}&&\text{if}\,\,\,\,\sigma\in[0,1),\\
	&(1+t)^{-\frac{(2-m)n+2ms}{4m}}\left\|\ml{F}^{-1}\left(w^{(0)}_0\right)\right\|_{L^m(\mb{R}^n)}&&\text{if}\,\,\,\,\sigma\in[1,2],
	\end{aligned}\right.
	\end{align*}
	where we use the following facts for $\bar{m}\in[1,\infty)$, $\alpha_0>0$ and $s\geq0$:
	\begin{align*}
	\left\|\chi_{\intt}(\xi)|\xi|^se^{-c|\xi|^{\alpha_0}t}\right\|^{\bar{m}}_{L^{\bar{m}}(\mb{R}^n)}\lesssim1\,\,\,\,\text{ if}\,\,\,\,0\leq t\leq1,
	\end{align*}
	\begin{align*}
	\left\|\chi_{\intt}(\xi)|\xi|^se^{-c|\xi|^{\alpha_0}t}\right\|_{L^{\bar{m}}(\mb{R}^n)}^{\bar{m}}&=\int_0^{\varepsilon}r^{s\bar{m}+n-1}e^{-c\bar{m}r^{\alpha_0}t}dr\\
	&=\frac{1}{\alpha_0}t^{-\frac{1}{\alpha_0}(s\bar{m}+n)}\int_0^{\varepsilon^{\alpha_0}t}\tau^{\frac{1}{\alpha_0}(s\bar{m}+n)-1}e^{-c\bar{m}\tau^{\alpha_0}}d\tau\\
	&\lesssim t^{-\frac{\bar{m}}{\alpha_0}\left(s+\frac{n}{\bar{m}}\right)}\,\,\,\,\text{ if}\,\,\,\,1\leq t.
	\end{align*}
	For frequencies in the bounded zone and the large zone, we obtain an exponential decay estimate
	\begin{align*}
	\left\|\left(\chi_{\midd}(\xi)+\chi_{\extt}(\xi)\right)|\xi|^sw^{(0)}(t,\xi)\right\|_{L^2(\mb{R}^n)}\lesssim e^{-ct}\left\|\ml{F}^{-1}\left(w^{(0)}_0\right)\right\|_{H^s(\mb{R}^n)},
	\end{align*}
	where the constant $c>0$. Finally, combining with the Parseval-Plancherel theorem, the proof of Theorem \ref{Lm-L2 esitmates Fourier law} is complete.
\end{proof}

\vskip2mm

\begin{rem}
	Concerning the sharpness of the derived energy estimates in Theorem \ref{Lm-L2 esitmates Fourier law}, we point out that the estimates for $\left\||\xi|^sw^{(0)}(t,\xi)\right\|_{L^2(\mb{R}^n)}$ seen to be sharp because diagonalization procedure is used in deriving representations of solutions.
\end{rem}
\vskip2mm
Next, we discuss energy estimates with initial data taking from the weighted spaces $L^{1,\delta}$ for $\delta\in(0,1]$ (see Notation in Section \ref{Section Introduction}).
Before stating our result, we recall the following useful lemma, which was introduced in Lemma 2.1 in the paper \cite{Ikehata2004}.

\vskip2mm

\begin{lem}\label{Ikehata lemma L1gamma repeat}
	Let $\delta\in(0,1]$ and $f\in L^{1,\delta}(\mb{R}^n)$. Then, the following estimate holds:
	\begin{align*}
	|\hat{f}(\xi)|\leq C_{\delta}|\xi|^{\delta}\|f\|_{L^{1,\delta}(\mb{R}^n)}+\Big|\int_{\mb{R}^n}f(x)dx\Big|,
	\end{align*}
	with some constant $C_{\delta}>0$.
\end{lem}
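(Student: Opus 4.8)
The plan is to split the Fourier transform into a piece that vanishes like $|\xi|^\delta$ near $\xi=0$ and a piece that reproduces the integral $\int_{\mb{R}^n}f(x)\,dx$. First I would write
\begin{align*}
\hat{f}(\xi)=\int_{\mb{R}^n}f(x)e^{-ix\cdot\xi}\,dx
=\int_{\mb{R}^n}f(x)\big(e^{-ix\cdot\xi}-1\big)\,dx+\int_{\mb{R}^n}f(x)\,dx,
\end{align*}
so that the second term is exactly the constant $\int_{\mb{R}^n}f(x)\,dx$ appearing on the right-hand side, and everything reduces to bounding the first integral. Applying the triangle inequality immediately gives
\begin{align*}
|\hat{f}(\xi)|\leq\int_{\mb{R}^n}|f(x)|\,\big|e^{-ix\cdot\xi}-1\big|\,dx+\Big|\int_{\mb{R}^n}f(x)\,dx\Big|,
\end{align*}
so the whole task is to estimate the kernel $|e^{-ix\cdot\xi}-1|$ by something proportional to $(1+|x|)^\delta|\xi|^\delta$, which is precisely the weight defining $L^{1,\delta}$.

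The key pointwise inequality I would establish is $|e^{-i\tau}-1|\le C_\delta|\tau|^\delta$ for all $\tau\in\mb{R}$ and each fixed $\delta\in(0,1]$. For $|\tau|\le1$ this follows from $|e^{-i\tau}-1|\le|\tau|=|\tau|^{1-\delta}|\tau|^\delta\le|\tau|^\delta$, using the elementary bound $|e^{-i\tau}-1|\le|\tau|$; for $|\tau|\ge1$ it follows from $|e^{-i\tau}-1|\le2\le2|\tau|^\delta$. Setting $\tau=x\cdot\xi$ and using $|x\cdot\xi|\le|x|\,|\xi|\le(1+|x|)|\xi|$ yields
\begin{align*}
\big|e^{-ix\cdot\xi}-1\big|\le C_\delta\,(1+|x|)^\delta|\xi|^\delta .
\end{align*}
Substituting this into the first integral and pulling out the factor $|\xi|^\delta$ gives
\begin{align*}
\int_{\mb{R}^n}|f(x)|\,\big|e^{-ix\cdot\xi}-1\big|\,dx
\le C_\delta|\xi|^\delta\int_{\mb{R}^n}(1+|x|)^\delta|f(x)|\,dx
=C_\delta|\xi|^\delta\|f\|_{L^{1,\delta}(\mb{R}^n)},
\end{align*}
which is exactly the claimed bound.

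I expect no serious obstacle here; the only point requiring a little care is the uniform interpolation inequality $|e^{-i\tau}-1|\le C_\delta|\tau|^\delta$ over all of $\mb{R}$, where one must handle small and large $|\tau|$ separately (linear growth versus the trivial bound by $2$) and absorb the constants into a single $C_\delta$. The restriction $\delta\in(0,1]$ is essential precisely at this step, since the bound $|e^{-i\tau}-1|\le|\tau|^\delta$ for small $|\tau|$ fails once $\delta>1$; this is also what forces the weight exponent in $L^{1,\delta}$ to stay in $(0,1]$ and thereby guarantees the integral $\|f\|_{L^{1,\delta}(\mb{R}^n)}$ controls the estimate.
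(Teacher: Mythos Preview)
The paper does not supply its own proof of this lemma; it merely records the statement and cites \cite{Ikehata2004} (Lemma~2.1 there). Your argument is correct and is exactly the standard proof from that reference: write $\hat{f}(\xi)=\int f(x)(e^{-ix\cdot\xi}-1)\,dx+\int f(x)\,dx$, then use the elementary interpolation bound $|e^{-i\tau}-1|\le C_\delta|\tau|^\delta$ (obtained by combining $|e^{-i\tau}-1|\le|\tau|$ for $|\tau|\le1$ with $|e^{-i\tau}-1|\le2$ for $|\tau|\ge1$) and $|x\cdot\xi|\le(1+|x|)|\xi|$ to recover the $L^{1,\delta}$ weight.
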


\vskip2mm

\begin{thm}\label{Energy estimates L1delta}
	Let us assume $\left(|D|^2u_0,u_1,\theta_0\right)\in\ml{B}_{\delta,s}(\mb{R}^n)$, where $s\geq0$ and $\delta\in(0,1]$. Then, the solution the Cauchy problem \eqref{Eq. thermo plate Fourier} with $\sigma\in[0,2]$ satisfies the following estimates:
	\begin{align*}
	&\left\||D|^2u(t,\cdot)\right\|_{\dot{H}^s(\mb{R}^n)}+\left\|u_t(t,\cdot)\right\|_{\dot{H}^s(\mb{R}^n)}+\|\theta(t,\cdot)\|_{\dot{H}^s(\mb{R}^n)}\\
	&\qquad\qquad\lesssim(1+t)^{-\gamma(\sigma,n,1,s+\delta)}\left\|\left(|D|^2u_0,u_1,\theta_0\right)\right\|_{\ml{B}_{\delta,s}(\mb{R}^n)}+(1+t)^{-\gamma(\sigma,n,1,s)}\Big|\int_{\mb{R}^n}U_0(x)dx\Big|,
	\end{align*}
	where initial data $U_0(x)$ is defined in \eqref{Initialdata}.
\end{thm}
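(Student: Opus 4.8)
The plan is to mimic the proof of Theorem \ref{Lm-L2 esitmates Fourier law} (the case $m=1$) but refine the small-frequency estimate using the weighted bound from Lemma \ref{Ikehata lemma L1gamma repeat}. First I would split the phase space into the three zones as before. For the bounded and large zones, the exponential decay estimate
\begin{align*}
\left\|\left(\chi_{\midd}(\xi)+\chi_{\extt}(\xi)\right)|\xi|^sw^{(0)}(t,\xi)\right\|_{L^2(\mb{R}^n)}\lesssim e^{-ct}\left\|\ml{F}^{-1}\left(w^{(0)}_0\right)\right\|_{H^s(\mb{R}^n)}
\end{align*}
carries over verbatim from Lemma \ref{Lemma 2.4} and Theorems \ref{Thm. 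Rep. Sol. Large freq. Fourier}, \ref{Thm. Rep. Sol. =1 freq. Fourier}, and is absorbed into the $(1+t)^{-\gamma(\sigma,n,1,s+\delta)}$ term since exponential decay dominates any polynomial rate. So the heart of the matter is the small-frequency contribution.

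In the small zone the representation from Theorem \ref{Thm. Rep. Sol. Small freq. Fourier} reduces the estimate to bounding $\chi_{\intt}(\xi)|\xi|^se^{-c|\xi|^{\alpha_0}t}\big|w_0^{(0)}(\xi)\big|$ in $L^2$, where $\alpha_0=4-2\sigma$ if $\sigma\in[0,1)$ and $\alpha_0=2$ if $\sigma\in[1,2]$, using the uniform invertibility of $T_{\sigma,\intt}$. The key new ingredient is that instead of the crude bound $\big|w_0^{(0)}(\xi)\big|\lesssim\|\ml{F}^{-1}(w^{(0)}_0)\|_{L^1}$ I would apply Lemma \ref{Ikehata lemma L1gamma repeat} componentwise to $w_0^{(0)}=\hat{U}_0$, giving
\begin{align*}
\big|w_0^{(0)}(\xi)\big|\lesssim|\xi|^{\delta}\|U_0\|_{L^{1,\delta}(\mb{R}^n)}+\Big|\int_{\mb{R}^n}U_0(x)dx\Big|.
\end{align*}
Substituting this produces two pieces: one with an extra factor $|\xi|^{\delta}$ and the weighted norm, one with the zero-mode $P_{U_0}$.

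The two pieces are then estimated by the same computation already carried out in the proof of Theorem \ref{Lm-L2 esitmates Fourier law}. For the first piece the extra $|\xi|^{\delta}$ simply shifts the regularity parameter from $s$ to $s+\delta$, and the scaling lemma
\begin{align*}
\left\|\chi_{\intt}(\xi)|\xi|^{s+\delta}e^{-c|\xi|^{\alpha_0}t}\right\|_{L^2(\mb{R}^n)}\lesssim(1+t)^{-\gamma(\sigma,n,1,s+\delta)}
\end{align*}
yields the rate $\gamma(\sigma,n,1,s+\delta)$ against $\|U_0\|_{L^{1,\delta}}$, which is controlled by $\|(|D|^2u_0,u_1,\theta_0)\|_{\ml{B}_{\delta,s}(\mb{R}^n)}$ by definition of $U_0$ in \eqref{Initialdata}. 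For the second piece the same lemma with $\delta=0$ gives $(1+t)^{-\gamma(\sigma,n,1,s)}$ times $|P_{U_0}|$. Combining the two zones via the Parseval-Plancherel theorem completes the proof. I do not expect a genuine obstacle here: the argument is essentially the $m=1$ energy estimate refined by one application of Lemma \ref{Ikehata lemma L1gamma repeat}, and the only point demanding mild care is checking that the weighted norm of the vector $U_0$ is dominated by the $\ml{B}_{\delta,s}$-norm of the original data, which follows since $|D|^2u_0,u_1,\theta_0$ each lie in $L^{1,\delta}(\mb{R}^n)$.
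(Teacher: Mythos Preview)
Your proposal is correct and follows exactly the approach the paper has in mind: the paper's own proof merely says to follow Theorem 4.3 of \cite{Chen2018}, and the argument there is precisely the one you outline --- split into zones, use exponential decay on $Z_{\midd}\cup Z_{\extt}$, and in $Z_{\intt}$ apply Lemma \ref{Ikehata lemma L1gamma repeat} to $w_0^{(0)}=\hat{U}_0$ to gain the extra $|\xi|^{\delta}$ on one piece and isolate the mean $P_{U_0}$ on the other.
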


\begin{proof}
	One can prove Theorem \ref{Energy estimates L1delta} strictly following Theorem 4.3 in the recent paper \cite{Chen2018}.
\end{proof}

\vskip2mm

\begin{rem}
	By restricting
	\begin{align}\label{int=0}
	\Big|\int_{\mb{R}^n}U_0(x)dx\Big|=0,
	\end{align}
	we find that the decay rate given in Theorem \ref{Lm-L2 esitmates Fourier law} when $m=1$ can be improved by $(1+t)^{-\frac{\delta}{2}}$ for $\delta\in(0,1]$. We need to point out that the additional condition \eqref{int=0} holds when $U_0(x)$ is odd function with respect to $x_n$, in other words,
	\begin{align*}
	U_0(x_1,\dots,x_{n-1},-x_n)=-U_0(x_1,\dots,x_{n-1},x_n).
	\end{align*} 
\end{rem}

\vskip2mm

\begin{rem}
	The statements of Theorems \ref{L2-L2 esitmates Fourier law}, \ref{Lm-L2 esitmates Fourier law} and \ref{Energy estimates L1delta} tell us that the thermal dissipation generated by Fourier's law has a dominant influence in comparison with friction and structural damping only if $\sigma\in[1,2]$ on energy estimates.
\end{rem}

\subsection{$L^p-L^q$ estimates not necessary on the conjugate line}
In the beginning, let us introduce the parameters to depict the decay rate
\begin{align}\label{LpLq decay rate}
\mu(\sigma,n,p,q,s):=\left\{\begin{aligned}
&\frac{s}{4-2\sigma}+\frac{n}{4-2\sigma}\left(\frac{1}{p}-\frac{1}{q}\right)&&\text{if}\,\,\,\,\sigma\in[0,1),\\
&\frac{s}{2}+\frac{n}{2}\left(\frac{1}{p}-\frac{1}{q}\right)&&\text{if}\,\,\,\,\sigma\in[1,2],
\end{aligned}\right.
\end{align}
where $s\geq0$ and $1\leq p\leq q\leq\infty$.

Moreover, we define the parameter to depict the regularity for initial data
\begin{align}\label{LpLq regularity}
M_{n,s,p,q}>s+n\left(\frac{1}{p}-\frac{1}{q}\right),
\end{align}
where $s\geq0$ and $1\leq p\leq 2\leq q\leq\infty$.

\subsubsection{$L^p-L^q$ estimates for the model with $\sigma\in[0,1)\cup(1,2]$}Before starting our main theorem, we prove the following useful lemma first.

\vskip2mm

\begin{lem}\label{Lemma LpLq}
	Let us $f\in\ml{S}(\mb{R}^n)$ and $\kappa_1>0$, $\kappa_2\geq0$, $s\geq0$. Then, the next estimates hold:
	\begin{align}
	\left\|\ml{F}^{-1}_{\xi\rightarrow x}\left(\chi_{\intt}(\xi)|\xi|^se^{-c|\xi|^{\kappa_1}t}\hat{f}(\xi)\right)\right\|_{L^q(\mb{R}^n)}&\lesssim (1+t)^{-\frac{s}{\kappa_1}-\frac{n}{\kappa_1}\left(\frac{1}{p}-\frac{1}{q}\right)}\|f\|_{L^p(\mb{R}^n)},\label{LpLq small frequencies}\\
	\left\|\ml{F}^{-1}_{\xi\rightarrow x}\left(\chi_{\extt}(\xi)|\xi|^se^{-c|\xi|^{\kappa_2}t}\hat{f}(\xi)\right)\right\|_{L^q(\mb{R}^n)}&\lesssim e^{-ct}\left\|\langle D\rangle^{M_{n,s,p,q}} f\right\|_{L^p(\mb{R}^n)},\label{LpLq large frequencies}
	\end{align}
	where $c>0$, $1\leq p\leq 2\leq q\leq\infty$ and $M_{n,s,p,q}$ is chosen in \eqref{LpLq regularity}.
\end{lem}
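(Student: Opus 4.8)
The strategy is to treat the two estimates separately, since the small-frequency estimate \eqref{LpLq small frequencies} carries a polynomial decay in $t$ while the large-frequency estimate \eqref{LpLq large frequencies} carries exponential decay at the price of a loss of regularity. In both cases the core object is the $L^r$ norm of the oscillating/parabolic multiplier, and the main analytic tool will be a Young-type inequality combined with the known $L^r$ bounds for Fourier multipliers of the form $\chi(\xi)|\xi|^s e^{-c|\xi|^{\kappa}t}$. I would first reduce the $L^p\to L^q$ mapping to an $L^1\to L^q$ convolution estimate by writing $\mathcal{F}^{-1}(\chi|\xi|^s e^{-c|\xi|^{\kappa}t}\hat f)=\mathcal{F}^{-1}(\chi|\xi|^s e^{-c|\xi|^{\kappa}t})*f$ and invoking Young's convolution inequality, so that everything hinges on controlling $\|\mathcal{F}^{-1}(\chi_{\intt}(\xi)|\xi|^s e^{-c|\xi|^{\kappa_1}t})\|_{L^r}$ for the appropriate Hölder conjugate exponent $r$ determined by $\tfrac{1}{q}=\tfrac{1}{p}+\tfrac{1}{r}-1$.

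\emph{Small-frequency estimate.} For \eqref{LpLq small frequencies} the plan is to separate the regimes $0\le t\le 1$ and $t\ge 1$. On the compact time interval the multiplier is bounded uniformly and its inverse Fourier transform lies in every $L^r$ with a constant independent of $t$, giving the trivial bound absorbed into the factor $(1+t)^{0}$. For $t\ge 1$ I would exploit the scaling structure of $e^{-c|\xi|^{\kappa_1}t}$: substituting $\xi=t^{-1/\kappa_1}\eta$ in the integral defining the inverse transform extracts the decay factor $t^{-n/\kappa_1}$ from the Jacobian and $t^{-s/\kappa_1}$ from the homogeneous weight $|\xi|^s$, exactly as performed explicitly in the proof of Theorem \ref{Lm-L2 esitmates Fourier law}. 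The resulting rescaled kernel is uniformly in $L^r(\mb{R}^n)$ because the truncated Gaussian-type function $\chi_{\intt}|\xi|^s e^{-c|\xi|^{\kappa_1}}$ is Schwartz-like near the origin and compactly supported, so its inverse transform decays rapidly. Assembling the scaling exponents via the Hausdorff--Young inequality ($L^p\to L^{p'}$ on the Fourier side) together with the Young exponent relation yields precisely the decay rate $-\tfrac{s}{\kappa_1}-\tfrac{n}{\kappa_1}(\tfrac1p-\tfrac1q)$.

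\emph{Large-frequency estimate.} For \eqref{LpLq large frequencies} the key point is that on $Z_{\extt}(N)$ the exponent $e^{-c|\xi|^{\kappa_2}t}$ with $\kappa_2\ge 0$ need not decay in $|\xi|$, so one cannot gain decay from the multiplier itself; instead I would factor out a uniform $e^{-ct}$ (justified because $|\xi|\ge N$ forces $c|\xi|^{\kappa_2}t\ge ct$ when $\kappa_2>0$, and is immediate when $\kappa_2=0$) and absorb the remaining high-frequency weight $|\xi|^s$ into a regularity loss. Here the natural device is to write $|\xi|^s\lesssim\langle\xi\rangle^{M_{n,s,p,q}}$ on the support of $\chi_{\extt}$ and then apply an $L^p\to L^q$ Fourier-multiplier estimate of Mihlin--Hörmander type, using that $M_{n,s,p,q}>s+n(\tfrac1p-\tfrac1q)$ supplies enough derivatives for the multiplier $\chi_{\extt}(\xi)|\xi|^s\langle\xi\rangle^{-M_{n,s,p,q}}$ to act boundedly between the relevant Lebesgue spaces.

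\textbf{Main obstacle.} The delicate step is the large-frequency estimate: establishing the $L^p\to L^q$ boundedness off the conjugate line requires a genuine multiplier theorem rather than a plain Young-convolution argument, and one must verify that the threshold $M_{n,s,p,q}>s+n(\tfrac1p-\tfrac1q)$ is exactly what guarantees the requisite number of bounded derivatives of the symbol. I expect the verification of the symbol estimates for $\chi_{\extt}(\xi)|\xi|^s\langle\xi\rangle^{-M}$ — ensuring that differentiating the truncation and the homogeneous factor does not destroy the decay needed for the Hörmander condition — to be the technically sensitive point, whereas the small-frequency scaling argument is routine once the substitution is set up.
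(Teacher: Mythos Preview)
Your overall plan --- separate the two regimes, extract the scaling rate on $Z_{\intt}(\varepsilon)$, and pull out $e^{-ct}$ on $Z_{\extt}(N)$ --- is correct, but both mechanisms you propose have gaps. For the small-frequency part you want to bound the physical-space kernel $K_t=\ml{F}^{-1}\big(\chi_{\intt}|\xi|^s e^{-c|\xi|^{\kappa_1}t}\big)$ in $L^r$, $1/r=1+1/q-1/p$, and then apply Young's convolution inequality. The claim that the rescaled kernel ``decays rapidly'' because the symbol is ``Schwartz-like near the origin'' is not justified: for generic $s\geq0$ and $\kappa_1>0$ the symbol has only finitely many derivatives at $\xi=0$, so $K_t$ decays merely polynomially, and you would still owe an argument placing it in $L^r$ for \emph{every} $r\in[1,\infty]$ (in particular $r=1$ when $p=q=2$). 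If instead you try to bound $\|K_t\|_{L^r}$ by $\|\widehat{K}_t\|_{L^{r'}}$ via Hausdorff--Young, that step requires $r\geq 2$, i.e.\ $1/p-1/q\geq 1/2$, so half of the admissible range is missed. For the large-frequency part, Mihlin--H\"ormander is an $L^p\to L^p$ theorem; reaching $L^p\to L^q$ would require an additional Sobolev embedding step you do not mention, and even then the endpoints $p=1$ and $q=\infty$ (which the lemma explicitly allows) fail.

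The paper sidesteps all of this by exploiting the hypothesis $1\leq p\leq 2\leq q\leq\infty$ directly: this is precisely the range in which Hausdorff--Young applies at \emph{both} ends, so one works entirely on the Fourier side,
\[
\big\|\ml{F}^{-1}(m\hat f)\big\|_{L^q}\leq \|m\hat f\|_{L^{q'}}\leq \|m\|_{L^{\tilde p}}\,\|\hat f\|_{L^{p'}}\leq \|m\|_{L^{\tilde p}}\,\|f\|_{L^p},\qquad \tfrac{1}{\tilde p}=\tfrac{1}{p}-\tfrac{1}{q}.
\]
For $m=\chi_{\intt}(\xi)|\xi|^s e^{-c|\xi|^{\kappa_1}t}$ the norm $\|m\|_{L^{\tilde p}}$ is computed by the radial substitution you already described, giving \eqref{LpLq small frequencies}. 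For $m=\chi_{\extt}(\xi)|\xi|^s e^{-c|\xi|^{\kappa_2}t}$ one bounds $e^{-c|\xi|^{\kappa_2}t}\leq e^{-ct}$ on the support (or by $1$ for $0\le t\le 1$), then splits $\langle\xi\rangle^s=\langle\xi\rangle^{-n(1/p-1/q)-\epsilon}\cdot\langle\xi\rangle^{s+n(1/p-1/q)+\epsilon}$ inside the H\"older step; the first factor lies in $L^{\tilde p}(Z_{\extt}(N))$ by a direct radial computation, and the second is absorbed into the data --- this is exactly where the threshold $M_{n,s,p,q}>s+n(1/p-1/q)$ comes from. No multiplier theorem is needed.
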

\begin{proof}
	Let us prove \eqref{LpLq small frequencies} first. Applying the Hausdorff-Young inequality yields
	\begin{align}\label{Proof 01}
	\left\|\ml{F}^{-1}_{\xi\rightarrow x}\left(\chi_{\intt}(\xi)|\xi|^se^{-c|\xi|^{\kappa_1}t}\hat{f}(\xi)\right)\right\|_{L^q(\mb{R}^n)}\lesssim \left\|\chi_{\intt}(\xi)|\xi|^se^{-c|\xi|^{\kappa_1}t}\hat{f}(\xi)\right\|_{L^{q'}(\mb{R}^n)}.
	\end{align}
	Here $\frac{1}{q}+\frac{1}{q'}=1$ with $2\leq q\leq \infty$. By H\"older's inequality, the estimate holds
	\begin{align}\label{Proof 02}
	\left\|\chi_{\intt}(\xi)|\xi|^se^{-c|\xi|^{\kappa_1}t}\hat{f}(\xi)\right\|_{L^{q'}(\mb{R}^n)}\lesssim\left\|\chi_{\intt}(\xi)|\xi|^se^{-c|\xi|^{\kappa_1}t}\right\|_{L^{\tilde{p}}(\mb{R}^n)}\|\hat{f}\|_{L^{p'}(\mb{R}^n)},
	\end{align}
	where $\frac{1}{q'}=\frac{1}{\tilde{p}}+\frac{1}{p'}$ with $2\leq p'\leq\infty$. Finally, combining with \eqref{Proof 01}, \eqref{Proof 02} and the Hausdorff-Young inequality leads to
	\begin{align*}
	\left\|\ml{F}^{-1}_{\xi\rightarrow x}\left(\chi_{\intt}(\xi)|\xi|^se^{-c|\xi|^{\kappa_1}t}\hat{f}(\xi)\right)\right\|_{L^q(\mb{R}^n)}
	&\lesssim(1+t)^{-\frac{s}{\kappa_1}-\frac{n}{\kappa_1}\left(\frac{1}{p}-\frac{1}{q}\right)}\|f\|_{L^p(\mb{R}^n)}.
	\end{align*}
	Next, we begin with proving \eqref{LpLq large frequencies}. For $0\leq t\leq 1$, by the similar approach we have
	\begin{align}
	&\left\|\ml{F}^{-1}_{\xi\rightarrow x}\left(\chi_{\extt}(\xi)|\xi|^se^{-c|\xi|^{\kappa_2}t}\hat{f}(\xi)\right)\right\|_{L^q(\mb{R}^n)}\nonumber\lesssim \left\|\chi_{\extt}(\xi)\langle\xi\rangle^s\hat{f}(\xi)\right\|_{L^{q'}(\mb{R}^n)}\nonumber\\
	&\qquad\qquad\lesssim\left\|\chi_{\extt}(\xi)\langle\xi\rangle^{-n\left(\frac{1}{p}-\frac{1}{q}\right)-\epsilon}\right\|_{L^{\tilde{p}}(\mb{R}^n)}\left\|\chi_{\extt}(\xi)\langle \xi\rangle^{s+n\left(\frac{1}{p}-\frac{1}{q}\right)+\epsilon}\hat{f}(\xi)\right\|_{L^{p'}(\mb{R}^n)},\label{Proof 03}
	\end{align}
	where $\frac{1}{q}+\frac{1}{q'}=1$, $\frac{1}{q'}=\frac{1}{\tilde{p}}+\frac{1}{p'}$ with $2\leq q\leq \infty$, $2\leq p'\leq\infty$ and $\epsilon>0$.\\
	The following fact holds:
	\begin{align}\label{Proof 04}
	\left\|\chi_{\extt}(\xi)\langle\xi\rangle^{-n\left(\frac{1}{p}-\frac{1}{q}\right)-\epsilon}\right\|_{L^{\tilde{p}}(\mb{R}^n)}^{\tilde{p}}=\int_N^{\infty}\langle r\rangle^{-n\left(\frac{1}{p}-\frac{1}{q}\right)\tilde{p}-\epsilon\tilde{p}+n-1}dr=\int_N^{\infty}\langle r\rangle^{-\epsilon\tilde{p}-1}dr<\infty.
	\end{align}
	Summarizing \eqref{Proof 03}, \eqref{Proof 04} and using the Hausdorff-Young inequality we derive
	\begin{align*}
	\left\|\ml{F}^{-1}_{\xi\rightarrow x}\left(\chi_{\extt}(\xi)|\xi|^se^{-c|\xi|^{\kappa_2}t}\hat{f}(\xi)\right)\right\|_{L^q(\mb{R}^n)}\lesssim \left\|\langle D\rangle^{s+n\left(\frac{1}{p}-\frac{1}{q}\right)+\epsilon}f\right\|_{L^{p}(\mb{R}^n)}
	\end{align*}
	for $1\leq p\leq 2\leq q\leq \infty$ and $0\leq t\leq 1$.\\
	For the case $t\geq1$, according to $|\xi|\geq N$ we may obtain
	\begin{align*}
	\left\|\ml{F}^{-1}_{\xi\rightarrow x}\left(\chi_{\extt}(\xi)|\xi|^se^{-c|\xi|^{\kappa_2}t}\hat{f}(\xi)\right)\right\|_{L^q(\mb{R}^n)}\lesssim e^{-ct}\left\|\langle D\rangle^{s+n\left(\frac{1}{p}-\frac{1}{q}\right)+\epsilon}f\right\|_{L^{p}(\mb{R}^n)}.
	\end{align*}
	Hence, the proof of Lemma \ref{Lemma LpLq} is completed.
\end{proof}

Now, let us derive $L^p-L^q$ estimates of solutions to the Cauchy problem \eqref{Eq. thermo plate Fourier} with $\sigma\in[0,1)\cup(1,2]$, where $1\leq p\leq 2\leq q\leq\infty$.

\vskip2mm
\begin{thm} \label{Lp-Lq esitmates Fourier law} Let us assume $\left(|D|^2u_0,u_1,\theta_0\right)\in\ml{S}(\mb{R}^n)\times\ml{S}(\mb{R}^n)\times\ml{S}(\mb{R}^n).$ Then, the solution to  the Cauchy problem \eqref{Eq. thermo plate Fourier} with $\sigma\in[0,1)
	\cup(1,2]$ satisfies the following estimates:
	\begin{align*}
	&\left\||D|^2u(t,\cdot)\right\|_{\dot{H}^s_q(\mb{R}^n)}+\|u_t(t,\cdot)\|_{\dot{H}^s_q(\mb{R}^n)}+\|\theta(t,\cdot)\|_{\dot{H}^s_q(\mb{R}^n)}\\
	&\qquad\lesssim(1+t)^{-\mu(\sigma,n,p,q,s)}\left\|\left(|D|^2u_0,u_1,\theta_0\right)\right\|_{H_p^{M_{n,s,p,q}}(\mb{R}^n)\times H_p^{M_{n,s,p,q}}(\mb{R}^n)\times H_p^{M_{n,s,p,q}}(\mb{R}^n)},
	\end{align*}
	with $s\geq0$, $1\leq p\leq 2\leq q\leq\infty$ and $M_{n,s,p,q}>s+n\left(\frac{1}{p}-\frac{1}{q}\right)$.
\end{thm}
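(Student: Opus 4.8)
The plan is to combine the zone decomposition $1=\chi_{\intt}(\xi)+\chi_{\midd}(\xi)+\chi_{\extt}(\xi)$ with the representation formulas already established and then to reduce every contribution to Lemma \ref{Lemma LpLq}. Since the triple $\left(|D|^2u,u_t,\theta\right)$ is recovered from the components of $U$ in \eqref{Solution} by fixed invertible linear combinations, it suffices to estimate $\left\||D|^sU(t,\cdot)\right\|_{L^q(\mb{R}^n)}$, equivalently $\left\||\xi|^s\widehat{U}(t,\xi)\right\|_{L^q}$, against the data $w_0^{(0)}$. Throughout I keep the restriction $\sigma\in[0,1)\cup(1,2]$, so that Theorems \ref{Thm. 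Rep. Sol. Small freq. Fourier} and \ref{Thm. Rep. Sol. Large freq. Fourier} apply.

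\textbf{Small frequencies.} On $\mathrm{supp}\,\chi_{\intt}$ I would insert the representation from Theorem \ref{Thm. Rep. Sol. Small freq. Fourier}. The amplitude $T_{\sigma,\intt}\,\diag\left(e^{-\lambda_1t},e^{-\lambda_2t},e^{-\lambda_3t}\right)T_{\sigma,\intt}^{-1}$ is controlled because $T_{\sigma,\intt}$ is uniformly invertible for small frequencies, so the whole matrix is pointwise dominated by the slowest-decaying exponential $e^{-c|\xi|^{\kappa_1}t}$, where one reads off from the characteristic roots that $\kappa_1=4-2\sigma$ when $\sigma\in[0,1)$ and $\kappa_1=2$ when $\sigma\in(1,2]$ (the latter since $\mathrm{Re}\,y_j>0$ for $j=1,2,3$). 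Applying estimate \eqref{LpLq small frequencies} of Lemma \ref{Lemma LpLq} componentwise then produces exactly the factor $(1+t)^{-\frac{s}{\kappa_1}-\frac{n}{\kappa_1}\left(\frac{1}{p}-\frac{1}{q}\right)}=(1+t)^{-\mu(\sigma,n,p,q,s)}$ prescribed by \eqref{LpLq decay rate}.

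\textbf{Bounded and large frequencies.} On $\mathrm{supp}\,\chi_{\extt}$ I would use Theorem \ref{Thm. Rep. Sol. Large freq. Fourier}: there the smallest real part of the characteristic roots stays bounded away from zero (it is comparable to $|\xi|^2$ for $\sigma\in[0,1)$, and bounded below by a positive constant for $\sigma\in(1,2]$, the borderline $\sigma=2$ yielding a constant root), so the diagonal matrix is bounded by $e^{-c|\xi|^{\kappa_2}t}$ for some $\kappa_2\ge0$. Estimate \eqref{LpLq large frequencies} then delivers an $e^{-ct}$ decay at the cost of the $M_{n,s,p,q}$ extra derivatives that appear on the right-hand side. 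On $\mathrm{supp}\,\chi_{\midd}$ I would invoke Lemma \ref{Lemma 2.4} directly, which already gives $\big|w^{(0)}(t,\xi)\big|\lesssim e^{-ct}\big|w_0^{(0)}(\xi)\big|$; combined with the Hausdorff--Young and H\"older inequalities on the compact bounded zone this is again an exponentially decaying contribution.

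\textbf{Assembling.} Summing the three zones, the small-frequency part contributes $(1+t)^{-\mu(\sigma,n,p,q,s)}$ while the bounded and large parts contribute $e^{-ct}$, which is absorbed into the polynomial rate; tracing the data back through the uniformly bounded matrices $T_{\sigma,\intt}^{-1}$ and $T_{\sigma,\extt}^{-1}$ turns $w_0^{(0)}$ into the components of $\left(|D|^2u_0,u_1,\theta_0\right)$ and yields the stated $H_p^{M_{n,s,p,q}}$ norm. I expect the main obstacle to be the bookkeeping in the small-frequency step: one must verify that the \emph{full} amplitude matrix, not merely its diagonal, is bounded uniformly in $\xi$ by a single slowest-decaying exponential, which rests on the uniform invertibility of $T_{\sigma,\intt}$ and on correctly identifying the dominant root in each of the two ranges of $\sigma$. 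Once this is secured, Lemma \ref{Lemma LpLq} carries out all the analytic work and the remaining steps are routine.
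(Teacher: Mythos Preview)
Your proposal is correct and follows essentially the same route as the paper: zone decomposition via $\chi_{\intt}+\chi_{\midd}+\chi_{\extt}$, invocation of the representation Theorems \ref{Thm. Rep. Sol. Small freq. Fourier}, \ref{Thm. Rep. Sol. Large freq. Fourier} and Lemma \ref{Lemma 2.4} to reduce to scalar exponential multipliers, and then direct application of Lemma \ref{Lemma LpLq} in each zone. Your write-up is in fact more explicit than the paper's, which simply records the zone decomposition in $L^{q'}$ (after one Hausdorff--Young step) and then refers to the steps of Lemma \ref{Lemma LpLq}; your identification of the dominant exponent $\kappa_1=4-2\sigma$ versus $\kappa_1=2$ according to $\sigma\in[0,1)$ or $\sigma\in(1,2]$ is exactly the point that makes the small-frequency estimate go through.
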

\vskip2mm
\begin{rem}
	If one is interested in the case $p\in(1,2]$, then we can choose $M_{n,s,p,q}=s+n\left(\frac{1}{p}-\frac{1}{q}\right)$.
\end{rem}
\begin{proof}
	From Theorems \ref{Thm. Rep. Sol. Small freq. Fourier}, \ref{Thm. Rep. Sol. =1 freq. Fourier}, \ref{Thm. Rep. Sol. =1 freq. Fourier} and Lemma \ref{Lemma 2.4} we obtain
	\begin{align*}
	&\left\||D|^s\ml{F}^{-1}_{\xi\rightarrow x}\left(w^{(0)}\right)(t,\cdot)\right\|_{L^q(\mb{R}^n)}	\lesssim \left\||\xi|^sw^{(0)}(t,\xi)\right\|_{L^{q'}(\mb{R}^n)}\\
	&\qquad\qquad\lesssim\left\|\chi_{\intt}(\xi)|\xi|^sw^{(0)}(t,\xi)\right\|_{L^{q'}(\mb{R}^n)}+\left\|\chi_{\midd}(\xi)|\xi|^sw^{(0)}(t,\xi)\right\|_{L^{q'}(\mb{R}^n)}\\
	&\qquad\qquad\quad\,\,+\left\|\chi_{\extt}(\xi)|\xi|^sw^{(0)}(t,\xi)\right\|_{L^{q'}(\mb{R}^n)},
	\end{align*}
	where $\frac{1}{q}+\frac{1}{q'}=1$ with $2\leq q\leq\infty$.\\
	Following all steps from Lemma \ref{Lemma LpLq} we immediately complete the proof.
\end{proof}

\subsubsection{$L^p-L^q$ estimates for the model with $\sigma=1$}Due to the treatment in Lemma \ref{Lemma 2.3}, it allows us to obtain explicit representation of solutions. Therefore, it is helpful for us to derive $L^p-L^q$ estimates of solutions to the Cauchy problem \eqref{Eq. thermo plate Fourier}, where $1\leq p\leq q\leq\infty$. To do this, let us introduce some results in $L^p$ estimates for some oscillating integral by using modified Bessel functions (c.f. \cite{ReissigEbert2018,DuongKainaneReissig2015}).

\vskip2mm
\begin{lem}\label{Lemma LpLq away}
	Let $p\in[1,\infty]$ and $c_1>0$, $c_2\neq0$. Then, the following estimates hold for any $t>0$:
	\begin{align}
	\left\|\ml{F}^{-1}_{\xi\rightarrow x}\left(|\xi|^se^{-c_1|\xi|^2t}\sin\left(c_2|\xi|^2t\right)\right)(t,\cdot)\right\|_{L^p(\mb{R}^n)}\lesssim t^{-\frac{s}{2}-\frac{n}{2}\left(1-\frac{1}{p}\right)},\label{Proof 06}\\
	\left\|\ml{F}^{-1}_{\xi\rightarrow x}\left(|\xi|^se^{-c_1|\xi|^2t}\cos\left(c_2|\xi|^2t\right)\right)(t,\cdot)\right\|_{L^p(\mb{R}^n)}\lesssim t^{-\frac{s}{2}-\frac{n}{2}\left(1-\frac{1}{p}\right)},\label{Proof 07}
	\end{align}
	where $s\geq0$ and $n\geq1$.
\end{lem}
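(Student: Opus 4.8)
The plan is to reduce both estimates to a single scaling-invariant statement and then establish $L^1\cap L^\infty$ membership of the resulting profile. First I would absorb the damping and the oscillation into one complex Gaussian symbol. Writing $z=c_1-ic_2$, so that $\text{Re}\,z=c_1>0$, one has
\begin{align*}
e^{-c_1|\xi|^2t}\sin\big(c_2|\xi|^2t\big)&=\text{Im}\,e^{-z|\xi|^2t},\\
e^{-c_1|\xi|^2t}\cos\big(c_2|\xi|^2t\big)&=\text{Re}\,e^{-z|\xi|^2t},
\end{align*}
so that the kernels in \eqref{Proof 06} and \eqref{Proof 07} are just the imaginary and real parts of $\ml{F}^{-1}_{\xi\rightarrow x}\big(|\xi|^se^{-z|\xi|^2t}\big)(x)$. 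Since $\|\text{Re}\,F\|_{L^p},\|\text{Im}\,F\|_{L^p}\leq\|F\|_{L^p}$, it suffices to bound this single object, treating both statements at once.

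Next I would perform the parabolic scaling $\xi=\eta/\sqrt{t}$, which is legitimate because $t>0$ and the phase/amplitude depend on $\xi$ only through $|\xi|^2t$. This gives
\[
\ml{F}^{-1}_{\xi\rightarrow x}\big(|\xi|^se^{-z|\xi|^2t}\big)(x)=t^{-\frac{n+s}{2}}K\big(x/\sqrt{t}\big),\qquad K(y):=\ml{F}^{-1}_{\eta\rightarrow y}\big(|\eta|^se^{-z|\eta|^2}\big)(y),
\]
and hence, after the change of variables $x=\sqrt{t}\,y$ in the norm,
\[
\big\|\ml{F}^{-1}_{\xi\rightarrow x}\big(|\xi|^se^{-z|\xi|^2t}\big)\big\|_{L^p(\mb{R}^n)}=t^{-\frac{s}{2}-\frac{n}{2}\left(1-\frac{1}{p}\right)}\|K\|_{L^p(\mb{R}^n)}.
\]
This already produces the claimed power of $t$ for every $p\in[1,\infty]$, so the whole lemma reduces to the single $t$-independent claim that $\|K\|_{L^p(\mb{R}^n)}<\infty$ for all $p\in[1,\infty]$.

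To obtain this for all $p$ simultaneously I would prove $K\in L^1\cap L^\infty$ and interpolate. The $L^\infty$ bound is immediate, since $|K(y)|\lesssim\int_{\mb{R}^n}|\eta|^se^{-c_1|\eta|^2}d\eta<\infty$ because $\big|e^{-z|\eta|^2}\big|=e^{-c_1|\eta|^2}$. For the $L^1$ bound I would split the symbol $g(\eta)=|\eta|^se^{-z|\eta|^2}$ with a smooth cutoff $\chi$ equal to $1$ near $\eta=0$: the piece $(1-\chi)g$ is smooth with Gaussian decay, hence Schwartz, so its inverse transform lies in $L^1$; the piece $\chi g$ is compactly supported and, apart from the smooth factor $e^{-z|\eta|^2}$, carries exactly the homogeneous singularity $|\eta|^s$ at the origin. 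The decay of $K$ as $|y|\to\infty$ is governed by this singularity: for $s>0$ the Fourier transform of $|\eta|^s$ produces a tail of order $|y|^{-n-s}$ (or faster where $|\eta|^s$ is smooth), which is integrable at infinity precisely because $s>0$, while for $s=0$ the low-frequency symbol is genuinely smooth and its inverse transform is Schwartz.

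The main obstacle is making the low-frequency decay estimate rigorous, that is, controlling the non-smooth singularity $|\eta|^s$ at the origin and extracting the sharp $|y|^{-n-s}$ tail. This is exactly the point at which the radial structure of $g$ and the representation of $K$ through modified Bessel functions, together with their known asymptotic expansions, become indispensable. The required pointwise estimates for such oscillating integrals are available in \cite{ReissigEbert2018,DuongKainaneReissig2015}, and I would invoke them to conclude $\|K\|_{L^p(\mb{R}^n)}<\infty$ for all $p\in[1,\infty]$, which completes the proof.
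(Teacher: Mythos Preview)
Your proposal is correct and, in fact, more explicit than the paper's own argument: the paper simply cites Proposition~12 in \cite{DuongKainaneReissig2015} for \eqref{Proof 07} and says \eqref{Proof 06} follows by ``minor modifications'' of that proof, whereas you actually spell out the parabolic rescaling that reduces everything to the $t$-independent profile $K$ and then invoke the same references for the Bessel-function asymptotics needed to place $K$ in $L^1$.

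The one genuine addition in your write-up is the complex-exponential trick: packaging $e^{-c_1|\xi|^2t}\cos(c_2|\xi|^2t)$ and $e^{-c_1|\xi|^2t}\sin(c_2|\xi|^2t)$ as $\text{Re}\,e^{-z|\xi|^2t}$ and $\text{Im}\,e^{-z|\xi|^2t}$ with $z=c_1-ic_2$, $\text{Re}\,z>0$, handles both estimates at once and makes the ``minor modifications'' step in the paper entirely transparent. The rest of your outline---$L^\infty$ by direct integration, Schwartz decay of the high-frequency piece, and the $|y|^{-n-s}$ tail from the $|\eta|^s$ singularity at the origin (with the $s=0$ case handled by the explicit complex Gaussian)---is exactly the content of the cited proposition, so deferring the rigorous pointwise bounds to \cite{ReissigEbert2018,DuongKainaneReissig2015} is appropriate and matches the paper.
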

\begin{proof}
	For the proof of \eqref{Proof 07}, one can see Proposition 12 in \cite{DuongKainaneReissig2015}. One can prove \eqref{Proof 06} by some minor modifications of the proof of Proposition 12 in \cite{DuongKainaneReissig2015}.
\end{proof}

\vskip2mm
\begin{thm}\label{Thm, Lp-Lq away}
	Let us assume $\left(|D|^2u_0,u_1,\theta_0\right)\in L^p(\mb{R}^n)\times L^p(\mb{R}^n)\times L^p(\mb{R}^n)$, where $p\geq1$. Then, the solution to the Cauchy problem \eqref{Eq. thermo plate Fourier} with $\sigma=1$ satisfies the next estimates:
	\begin{align*}
	&\left\||D|^2u(t,\cdot)\right\|_{\dot{H}^s_q(\mb{R}^n)}+\|u_t(t,\cdot)\|_{\dot{H}^s_q(\mb{R}^n)}+\|\theta(t,\cdot)\|_{\dot{H}^s_q(\mb{R}^n)}\\
	&\qquad\qquad\lesssim t^{-\mu(1,n,p,q,s)}   \left\|\left(|D|^2u_0,u_1,\theta_0\right)\right\|_{L^p(\mb{R}^n)\times L^p(\mb{R}^n)\times L^p(\mb{R}^n)},
	\end{align*}
	where $s\geq0$ and $1\leq p\leq q\leq\infty$.
\end{thm}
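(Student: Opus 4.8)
The plan is to exploit the fully explicit representation from Theorem \ref{Thm. Rep. Sol. =1 freq. Fourier}, whose decisive feature is that for $\sigma=1$ the diagonalizer $T_{1,0}$ is independent of $\xi$ (because $A(|\xi|;1)=|\xi|^2(A_0+A_1)$ with $A_0+A_1$ a constant matrix). Consequently the representation
\[
w^{(0)}(t,\xi)=T_{1,0}\diag\left(e^{-y_4|\xi|^2t},e^{-y_5|\xi|^2t},e^{-y_6|\xi|^2t}\right)T_{1,0}^{-1}w_0^{(0)}(\xi)
\]
holds for all $\xi\in\mb{R}^n$, not merely asymptotically, so no decomposition into the zones $Z_{\intt},Z_{\midd},Z_{\extt}$ is needed. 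Applying $\ml{F}^{-1}_{\xi\rightarrow x}$ shows that $U(t,\cdot)=K(t,\cdot)*U_0$, where the kernel $K(t,\cdot)$ is the inverse Fourier transform of the constant matrix pencil above. Since $|D|^2u=\frac12(U_1-U_2)$, $u_t=\frac12(U_1+U_2)$, $\theta=U_3$ are fixed linear combinations of the components of $U$ (see \eqref{Solution}, \eqref{Initialdata}), and $\|U_0\|_{L^p}\lesssim\|(|D|^2u_0,u_1,\theta_0)\|_{L^p\times L^p\times L^p}$, it suffices to estimate $\||D|^sU(t,\cdot)\|_{L^q}$.

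The next step is to resolve the diagonal symbol into the three pieces covered by Lemma \ref{Lemma LpLq away}. By \eqref{Value y4 y5 y6} and the discussion in Lemma \ref{Lemma 2.3}, $y_4$ is real and positive while $y_5,y_6$ form a complex-conjugate pair, say $y_5=a+bi$ and $y_6=a-bi$ with $a>0$ and $b\neq0$ (this last fact being guaranteed by $y_5\neq y_6$). Writing $e^{-y_5|\xi|^2t}=e^{-a|\xi|^2t}\bigl(\cos(b|\xi|^2t)-i\sin(b|\xi|^2t)\bigr)$ and similarly for $y_6$, each entry of $K(t,\cdot)$, after multiplication by $|\xi|^s$, becomes a fixed linear combination of $\ml{F}^{-1}_{\xi\rightarrow x}\bigl(|\xi|^se^{-a|\xi|^2t}\cos(b|\xi|^2t)\bigr)$, $\ml{F}^{-1}_{\xi\rightarrow x}\bigl(|\xi|^se^{-a|\xi|^2t}\sin(b|\xi|^2t)\bigr)$, and the purely Gaussian kernel $\ml{F}^{-1}_{\xi\rightarrow x}\bigl(|\xi|^se^{-y_4|\xi|^2t}\bigr)$.

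To pass from the kernel to the $L^p$--$L^q$ estimate off the conjugate line I would invoke Young's convolution inequality: choosing $r\in[1,\infty]$ by $\frac1r=1-\bigl(\frac1p-\frac1q\bigr)$ one has $\||D|^sU(t,\cdot)\|_{L^q}\lesssim\||D|^sK(t,\cdot)\|_{L^r}\|U_0\|_{L^p}$ for all $1\le p\le q\le\infty$. Then the two oscillating kernels are controlled by \eqref{Proof 06} and \eqref{Proof 07} of Lemma \ref{Lemma LpLq away} with $c_1=a$, $c_2=b$, giving an $L^r$ bound $\lesssim t^{-\frac s2-\frac n2(1-\frac1r)}=t^{-\frac s2-\frac n2(\frac1p-\frac1q)}$, which is exactly $t^{-\mu(1,n,p,q,s)}$ by \eqref{LpLq decay rate}. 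The Gaussian kernel satisfies the same $L^r$ bound by the standard heat-kernel estimate. Collecting the three pieces and summing over the finitely many constant entries of $T_{1,0}$ and $T_{1,0}^{-1}$ completes the estimate, and translating through $U$ back to $|D|^2u$, $u_t$, $\theta$ yields the claimed bound.

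The main obstacle is twofold. First, Lemma \ref{Lemma LpLq away} is stated only for $c_2\neq0$, so the real root $y_4$ falls outside its scope and must be treated by a direct Gaussian computation; this is harmless but should be pointed out. Second, and more essentially, the whole gain over Theorem \ref{Lp-Lq esitmates Fourier law}---namely that $q$ may lie below $2$ and $p$ above $2$---rests precisely on replacing the Hausdorff--Young route (which forces $1\le p\le2\le q\le\infty$) by the convolution structure, and this is only available because $\sigma=1$ produces a single, globally valid parabolic kernel with $\xi$-independent diagonalizer. Verifying that $b\neq0$, so that the oscillating estimates genuinely apply, is the one algebraic point that must be secured from \eqref{Value y4 y5 y6}.
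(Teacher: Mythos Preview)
Your proposal is correct and follows essentially the same route as the paper: the paper likewise writes the solution via Theorem \ref{Thm. Rep. Sol. =1 freq. Fourier} as a convolution with the kernels $K_0^{(j)},K_1^{(j)}$ built from $\sin(\text{Im }y_{j+3}|\xi|^2t)e^{-\text{Re }y_{j+3}|\xi|^2t}$ and $\cos(\text{Im }y_{j+3}|\xi|^2t)e^{-\text{Re }y_{j+3}|\xi|^2t}$, bounds each in $L^r$ by Lemma \ref{Lemma LpLq away}, and closes with Young's inequality. Your explicit remark that the real root $y_4$ must be handled by the plain Gaussian estimate (since Lemma \ref{Lemma LpLq away} is stated only for $c_2\neq0$) is a point the paper passes over silently, so your write-up is in fact slightly more careful.
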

\begin{proof}
	From Theorem \ref{Thm. Rep. Sol. =1 freq. Fourier}, the solutions to \eqref{Eq. thermo plate Fourier} can be explicitly represented by the following way:
	\begin{align}
	&\left(u_t+|D|^2u,u_t-|D|^2u,\theta\right)^{\mathrm{T}}(t,x)\nonumber\\
	&\qquad\qquad=\left(\sum\limits_{j,k=1}^3c_{jkl}\ml{F}^{-1}_{\xi\rightarrow x}\left(e^{-\text{Re }y_{j+3}|\xi|^2t-i\text{Im }y_{j+3}|\xi|^2t}\right)\ast_{(x)}U_{0,k}(x)\right)_{l=1}^3\nonumber\\
	&\qquad\qquad=\left(\sum\limits_{j,k=1}^3c_{jkl}\left(K_0^{(j)}(t,x)+K_1^{(j)}(t,x)\right)\ast_{(x)}U_{0,k}(x)\right)_{l=1}^3,\label{Another repre}
	\end{align}
	where $c_{jkl}$ are constants and the kernels are
	\begin{align}
	K_0^{(j)}&:=\ml{F}_{\xi\rightarrow x}^{-1}\left(-i\sin\left(\text{Im }y_{j+3}|\xi|^2t\right)e^{-\text{Re }y_{j+3}|\xi|^2t}\right),\label{Kernel 00}\\
	K_1^{(j)}&:=\ml{F}_{\xi\rightarrow x}^{-1}\left(\cos\left(\text{Im }y_{j+3}|\xi|^2t\right)e^{-\text{Re }y_{j+3}|\xi|^2t}\right)\label{Kernel 01}.
	\end{align}
	By applying Lemma \ref{Lemma LpLq away} we get
	\begin{align*}
	\sum\limits_{j=1}^3\left\||D|^sK_0^{(j)}(t,\cdot)\right\|_{L^r(\mb{R}^n)}+\sum\limits_{j=1}^3\left\||D|^sK_1^{(j)}(t,\cdot)\right\|_{L^r(\mb{R}^n)}\lesssim t^{-\frac{s}{2}-\frac{n}{2}\left(1-\frac{1}{r}\right)}
	\end{align*}
	for all $r\in[1,\infty]$. Then, we directly apply Young's inequality in \eqref{Another repre} to complete the proof.
\end{proof}

We find that if $p=q$ in Theorem \ref{Thm, Lp-Lq away} and we suppose that higher regularity for initial data, the singularity as $t\rightarrow+0$ will disappear. So, we have the next result.

\vskip2mm
\begin{coro}\label{Coro LpLq}
	Let us assume $\left(|D|^2u_0,u_1,\theta_0\right)\in \dot{H}^s_p(\mb{R}^n)\times \dot{H}^s_p(\mb{R}^n)\times \dot{H}^s_p(\mb{R}^n)$, where $p\geq1$ and $s\geq0$. Then, the solution to the Cauchy problem \eqref{Eq. thermo plate Fourier} with $\sigma=1$ satisfies the following bounded estimates:
	\begin{align*}
	\left\||D|^2u(t,\cdot)\right\|_{\dot{H}^s_p(\mb{R}^n)}&+\|u_t(t,\cdot)\|_{\dot{H}^s_p(\mb{R}^n)}+\|\theta(t,\cdot)\|_{\dot{H}^s_p(\mb{R}^n)}\\
	&\lesssim   \left\|\left(|D|^2u_0,u_1,\theta_0\right)\right\|_{\dot{H}^s_p(\mb{R}^n)\times \dot{H}^s_p(\mb{R}^n)\times \dot{H}^s_p(\mb{R}^n)}.
	\end{align*}
\end{coro}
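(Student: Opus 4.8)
The plan is to read off the bounded estimate directly from the explicit representation of solutions for $\sigma=1$, rather than from the decay estimate of Theorem \ref{Thm, Lp-Lq away}. The point is that the only obstruction to a uniform-in-$t$ bound when $p=q$ is the singular factor $t^{-s/2}$, and this factor arises solely from letting $|D|^s$ fall on the kernels. I would therefore start from \eqref{Another repre}, which expresses each component of $U=(u_t+|D|^2u,\,u_t-|D|^2u,\,\theta)^{\mathrm{T}}$ as a finite linear combination of convolutions $\big(K_0^{(j)}+K_1^{(j)}\big)\ast_{(x)}U_{0,k}$, with kernels $K_0^{(j)},K_1^{(j)}$ given in \eqref{Kernel 00} and \eqref{Kernel 01}. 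Recovering the quantities of interest is algebraic, since $u_t=\tfrac12(U_1+U_2)$, $|D|^2u=\tfrac12(U_1-U_2)$ and $\theta=U_3$, so that $\||D|^2u(t,\cdot)\|_{\dot{H}^s_p}+\|u_t(t,\cdot)\|_{\dot{H}^s_p}+\|\theta(t,\cdot)\|_{\dot{H}^s_p}$ is controlled by $\sum_{l=1}^3\||D|^sU_l(t,\cdot)\|_{L^p}$.

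The key maneuver is to commute $|D|^s$ through the convolution: since $|D|^s$ is a Fourier multiplier it satisfies $|D|^s\big(K\ast_{(x)}U_{0,k}\big)=K\ast_{(x)}\big(|D|^sU_{0,k}\big)$, so the $s$ derivatives land on the data instead of on the kernels. Applying Young's convolution inequality with exponents $(1,p,p)$ then yields
\begin{align*}
\left\||D|^s\big(K_i^{(j)}\ast_{(x)}U_{0,k}\big)(t,\cdot)\right\|_{L^p(\mb{R}^n)}\lesssim\left\|K_i^{(j)}(t,\cdot)\right\|_{L^1(\mb{R}^n)}\left\||D|^sU_{0,k}\right\|_{L^p(\mb{R}^n)},
\end{align*}
for $i=0,1$ and $j,k=1,2,3$. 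Recalling from \eqref{Initialdata} that $U_0=(u_1+|D|^2u_0,\,u_1-|D|^2u_0,\,\theta_0)^{\mathrm{T}}$, the factor $\||D|^sU_{0,k}\|_{L^p}$ is bounded by $\||D|^2u_0\|_{\dot{H}^s_p}+\|u_1\|_{\dot{H}^s_p}+\|\theta_0\|_{\dot{H}^s_p}$, which is exactly the data norm on the right-hand side of the corollary.

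It then remains to control the $L^1$ norms of the undifferentiated kernels uniformly in $t$. This is precisely the endpoint $s=0$, $p=1$ of Lemma \ref{Lemma LpLq away}: taking $r=1$ and $s=0$ in \eqref{Proof 06} and \eqref{Proof 07}, the exponent $-\tfrac{s}{2}-\tfrac{n}{2}\big(1-\tfrac1r\big)$ vanishes, so that $\sum_{j=1}^3\|K_0^{(j)}(t,\cdot)\|_{L^1}+\sum_{j=1}^3\|K_1^{(j)}(t,\cdot)\|_{L^1}\lesssim1$ for all $t>0$. Combining this with the commutation step and summing over the finitely many constants $c_{jkl}$ completes the proof. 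The only step requiring care is this last one: one must observe that the $t^{-s/2}$ singularity of Theorem \ref{Thm, Lp-Lq away} is an artifact of differentiating the kernel, and that the $r=1$, $s=0$ endpoint of Lemma \ref{Lemma LpLq away} renders the kernels uniformly integrable, so shifting $|D|^s$ onto the data makes the bound genuinely $t$-independent.
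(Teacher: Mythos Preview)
Your proof is correct and follows precisely the approach the paper intends: the paper states the corollary without a detailed proof, noting only that if $p=q$ and one assumes higher regularity for the data then the singularity at $t\to+0$ disappears, and your argument---commuting $|D|^s$ onto the data in the representation \eqref{Another repre}, applying Young's inequality with the kernel in $L^1$, and invoking the $s=0$, $r=1$ endpoint of Lemma \ref{Lemma LpLq away}---is exactly how this is made rigorous.
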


\vskip2mm
\begin{rem}
	One can apply Theorem \ref{Thm, Lp-Lq away} for $t> t_0\gg1$ and Corollary \ref{Coro LpLq} for $0\leq t\leq t_0$ to obtain decay estimates for
	\begin{align*}
	\left\||D|^2u(t,\cdot)\right\|_{\dot{H}^s_q(\mb{R}^n)}+\|u_t(t,\cdot)\|_{\dot{H}^s_q(\mb{R}^n)}+\|\theta(t,\cdot)\|_{\dot{H}^s_q(\mb{R}^n)}
	\end{align*}
	with decay rate $(1+t)^{-\mu(1,n,p,q,s)}$. At this time, initial data should belong to the function spaces $\dot{H}^s_q(\mb{R}^n)\cap L^p(\mb{R}^n)$, where $n\geq1$, $1\leq p\leq q\leq\infty$ and $s\geq0$.
\end{rem}
\vskip2mm
\begin{rem}
	The statements of Theorems \ref{Lp-Lq esitmates Fourier law} and \ref{Thm, Lp-Lq away} indicate that the thermal dissipation generated by Fourier's law has a dominant influence in comparison with friction and structural damping only if $\sigma\in[1,2]$ on $L^p-L^q$ estimates away the conjugate line.
\end{rem}

\section{Diffusion phenomena}\label{Subsec Fourier asym profile}

It is well known that diffusion phenomena allow one to bridge the decay behavior of the solution to \eqref{Eq. thermo plate Fourier} with the solution for the corresponding evolution. It also provides a tool to tackle the asymptotic profiles of solutions.  In this section we study diffusion phenomena of solutions to the Cauchy problem \eqref{Eq. thermo plate Fourier} for $\sigma\in[0,1)\cup(1,2]$ with initial data carrying different assumptions on the regularity.

In the view of the derived estimates of solutions in Section \ref{Subsec Fouier estimate}, we find that the decay rate of estimates of solutions are determined by the behavior of the characteristic roots for $\xi\in Z_{\intt}(\varepsilon)$ only. For $\xi\in Z_{\midd}(\varepsilon,N)\cup Z_{\extt}(N)$, the solutions satisfies an exponential decay when we assume initial data having suitable regularities. For this reason, we explain diffusion phenomena of solutions for small frequencies in this section.

\vskip2mm
\begin{rem}
	Considering $\sigma=1$ in the system \eqref{Eq. first-order Fourier law}, we find that $e^{-y_j|\xi|^2t}$ with $y_j\in\mb{C}$ for $j=4,5,6,$ plays a determined role in the explicit representation of $w^{(0)}(t,\xi)$ from Theorem \ref{Thm. Rep. Sol. =1 freq. Fourier}. Then, there is not any improvement in the decay estimates for the difference between the solutions to the system \eqref{Eq. first-order Fourier law} with $\sigma=1$ and the solutions to its reference system. Hence, we  explain diffusion phenomena for $\sigma\in[0,1)\cup(1,2]$ only.
\end{rem}

\subsection{Diffusion phenomena for the model with $\sigma\in[0,1)$}
To describe diffusion phenomena of the solutions to the Cauchy problem \eqref{Eq. first-order Fourier law} with $\sigma\in[0,1)$, we consider the following reference system:
\begin{align}\label{evolution Fourier law}
\left\{\begin{aligned}
&\tilde{u}_t+\diag\left(\big(-\Delta\big)^{2-\sigma},\big(-\Delta\big),\big(-\Delta\big)^{\sigma}\right)\tilde{u}=0,&&t>0,\,\,x\in\mb{R}^n,\\
&\tilde{u}(0,x)=\ml{F}^{-1}\left(T_{1\frac{1}{2}}^{-1}T_{1}^{-1}T_0^{-1}w_0^{(0)}(\xi)\right)(x),&&x\in\mb{R}^n,
\end{aligned}\right.
\end{align}
where $\tilde{u}=\left(\tilde{u}^{(1)},\tilde{u}^{(2)},\tilde{u}^{(3)}\right)^{\mathrm{T}}$ and $T_0,T_1,T_{1\frac{1}{2}}$ are defined in Lemma \ref{Lemma 2.1}.  By applying the partial Fourier transform $\tilde{w}(t,\xi)=\ml{F}_{x\rightarrow\xi}\left(\tilde{u}(t,x)\right)$, \eqref{evolution Fourier law} can be transformed to
\begin{align}\label{evolution Four. Fourier law}
\left\{\begin{aligned}
&\tilde{w}_t+\diag\left(|\xi|^{4-2\sigma},|\xi|^{2},|\xi|^{2\sigma}\right)\tilde{w}=0,&&t>0,\,\,\xi\in\mb{R}^n,\\
&\tilde{w}(0,\xi)=T_{1\frac{1}{2}}^{-1}T_{1}^{-1}T_0^{-1}w_0^{(0)}(\xi),&&\xi\in\mb{R}^n.
\end{aligned}\right.
\end{align}
We know that the solution $\tilde{w}=\tilde{w}(t,\xi)$ to \eqref{evolution Four. Fourier law} can be explicitly represented by
\begin{align}\label{evolution solution Fourier law}
\tilde{w}(t,\xi)=\diag\left(e^{-|\xi|^{4-2\sigma}t},e^{-|\xi|^{2}t},e^{-|\xi|^{2\sigma}t}\right)T_{1\frac{1}{2}}^{-1}T_{1}^{-1}T_0^{-1}w_0^{(0)}(\xi).
\end{align}

\vskip2mm
\begin{rem}
	According to the evolution system \eqref{evolution Fourier law} with $\sigma=0$, we find that the reference system is consisted of two different evolution equations such that
	\begin{align*}
	\begin{aligned}
	&\text{fourth-order parabolic equation equation:}&&\tilde{u}^{(1)}_{t}+\Delta^2\tilde{u}^{(1)}=0,\\
	&\text{heat equation:}&&\tilde{u}^{(2)}_t-\Delta\tilde{u}^{(2)}=0.
	\end{aligned}
	\end{align*}
	Therefore, we obtain double diffusion phenomena of solution to \eqref{Eq. thermo plate Fourier} with $\sigma=0$. The effect of double diffusion phenomena was introduced in the recent papers \cite{DabbiccoEbert2014,ChenReissig2018}.
\end{rem}

\vskip2mm
\begin{rem} Let us consider \eqref{Eq. thermo plate Fourier} with $\sigma\in(0,1)$. Inspiring from the dominant asymptotic behavior of eigenvalues such that \begin{align*}
	\lambda_1(|\xi|)=\ml{O}\big(|\xi|^{4-2\sigma}\big),\,\,\,\, \lambda_2(|\xi|)=\ml{O}\big(|\xi|^2\big),\,\,\,\, \lambda_3(|\xi|)=\ml{O}\big(|\xi|^{2\sigma}\big)
	\end{align*} for $\xi\in Z_{\intt}(\varepsilon)$, we observe that the evolution system \eqref{evolution Fourier law} is consisted of three different evolution equations, which are
	\begin{align*}
	\begin{aligned}
	&\text{fractional heat equation 1:}&&\tilde{u}^{(1)}_t+(-\Delta)^{2-\sigma}\tilde{u}^{(1)}=0,\\
	&\text{heat equation:}&&\tilde{u}^{(2)}_{t}-\Delta\tilde{u}^{(2)}=0,\\
	&\text{fractional heat equation 2:}&&\tilde{u}^{(3)}_t+(-\Delta)^{\sigma}\tilde{u}^{(3)}=0.
	\end{aligned}
	\end{align*}
	We may interpret this effect as triple diffusion phenomena, which is a nature generalization of the effect of double diffusion phenomena.
\end{rem}

\vskip2mm
\begin{thm}\label{Thm Diffusion Lm}
	Let us consider the Cauchy problem \eqref{Eq. first-order Fourier law} with $\sigma\in[0,1)$. We assume $\left(|D|^2u_0,u_1,\theta_0\right)\in L^m(\mb{R}^n)\times L^m(\mb{R}^n)\times L^m(\mb{R}^n)$ with $m\in[1,2]$. Then, we have the following refinement estimates:
	\begin{align*}
	&\left\|\chi_{\intt}(D)\ml{F}_{\xi\rightarrow x}^{-1}\left(w^{(0)}-T_0T_1T_{1\frac{1}{2}}\tilde{w}\right)(t,\cdot)\right\|_{\dot{H}^s(\mb{R}^n)}\\
	&\qquad\qquad\lesssim(1+t)^{-\frac{(2-m)n+2ms}{4m(2-\sigma)}-\frac{1-\sigma}{2-\sigma}}\left\|\left(|D|^2u_0,u_1,\theta_0\right)\right\|_{L^m(\mb{R}^n)\times L^m(\mb{R}^n)\times L^m(\mb{R}^n)},
	\end{align*}
	where $T_0,T_1,T_{1\frac{1}{2}}$ are defined in Lemma \ref{Lemma 2.1}.
\end{thm}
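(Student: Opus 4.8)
The plan is to combine the small-frequency representation of $w^{(0)}$ from Theorem \ref{Thm. Rep. Sol. Small freq. Fourier} with the explicit reference solution \eqref{evolution solution Fourier law}, and to reduce the difference to a single matrix factor. Recalling $T_{\sigma,\intt}=T_0T_1T_{1\frac{1}{2}}T_2$ from \eqref{T matrices Case 2.1} and abbreviating
\begin{align*}
E_{\lambda}(t,\xi):=\diag\left(e^{-\lambda_1(|\xi|)t},e^{-\lambda_2(|\xi|)t},e^{-\lambda_3(|\xi|)t}\right),\quad E_0(t,\xi):=\diag\left(e^{-|\xi|^{4-2\sigma}t},e^{-|\xi|^2t},e^{-|\xi|^{2\sigma}t}\right),
\end{align*}
the common factors $T_0T_1T_{1\frac{1}{2}}$ cancel, so that for $\xi\in Z_{\intt}(\varepsilon)$ one has
\begin{align*}
w^{(0)}-T_0T_1T_{1\frac{1}{2}}\tilde{w}=T_0T_1T_{1\frac{1}{2}}\left(T_2E_{\lambda}T_2^{-1}-E_0\right)T_{1\frac{1}{2}}^{-1}T_1^{-1}T_0^{-1}w_0^{(0)}.
\end{align*}
Since $T_0T_1T_{1\frac{1}{2}}$ and its inverse are uniformly bounded for small frequencies by Lemma \ref{Lemma 2.1}, and since $\|\ml{F}^{-1}(w_0^{(0)})\|_{L^m(\mb{R}^n)}\lesssim\|(|D|^2u_0,u_1,\theta_0)\|_{L^m(\mb{R}^n)\times L^m(\mb{R}^n)\times L^m(\mb{R}^n)}$, it suffices to bound the middle factor $T_2E_{\lambda}T_2^{-1}-E_0$.

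I would then split
\begin{align*}
T_2E_{\lambda}T_2^{-1}-E_0=\left(T_2E_{\lambda}T_2^{-1}-E_{\lambda}\right)+\left(E_{\lambda}-E_0\right).
\end{align*}
For the first bracket I use $T_2=I_3+N_2(|\xi|)$ with $N_2(|\xi|)=\ml{O}\big(|\xi|^{2-2\sigma}\big)$ from Lemma \ref{Lemma 2.1}; expanding $T_2^{-1}=I_3-N_2(|\xi|)+\ml{O}\big(|\xi|^{4-4\sigma}\big)$ yields $T_2E_{\lambda}T_2^{-1}-E_{\lambda}=[N_2(|\xi|),E_{\lambda}]+\ml{O}\big(|\xi|^{4-4\sigma}\big)$. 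As $N_2(|\xi|)$ couples only the first two modes and $\lambda_1\leq\lambda_2$ for small $\xi$, each nonzero entry of the commutator is of the form $|\xi|^{2-2\sigma}\big(e^{-\lambda_1t}-e^{-\lambda_2t}\big)$, hence dominated by $|\xi|^{2-2\sigma}e^{-c|\xi|^{4-2\sigma}t}$.

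For the second bracket I invoke the eigenvalue expansions of Theorem \ref{Thm. Rep. Sol. Small freq. Fourier}, valid modulo $\ml{O}\big(|\xi|^{6-4\sigma}\big)$, together with the elementary inequality $|e^{-a}-e^{-b}|\leq|a-b|\,e^{-\min\{a,b\}}$ (applied to the relevant real parts). The slowest mode produces $|e^{-\lambda_1t}-e^{-|\xi|^{4-2\sigma}t}|\lesssim|\xi|^{6-4\sigma}t\,e^{-c|\xi|^{4-2\sigma}t}$, and absorbing the factor $|\xi|^{4-2\sigma}t$ into the exponential via $\tau e^{-c\tau}\lesssim e^{-c'\tau}$ leaves $|\xi|^{2-2\sigma}e^{-c'|\xi|^{4-2\sigma}t}$; the remaining two modes carry the strictly faster factors $e^{-c|\xi|^2t}$ and $e^{-c|\xi|^{2\sigma}t}$ and are therefore subordinate on $Z_{\intt}(\varepsilon)$. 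Altogether the middle factor is controlled by $|\xi|^{2-2\sigma}e^{-c|\xi|^{4-2\sigma}t}$ in the dominant direction.

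Finally, I would feed this bound into the $L^m$--$L^2$ scheme used in the proof of Theorem \ref{Lm-L2 esitmates Fourier law}: by the Hausdorff--Young and H\"older inequalities the weighted $\dot{H}^s$ norm of the small-frequency part is estimated by
\begin{align*}
\left\|\chi_{\intt}(\xi)|\xi|^{s+2-2\sigma}e^{-c|\xi|^{4-2\sigma}t}\right\|_{L^{\frac{2m}{2-m}}(\mb{R}^n)}\lesssim(1+t)^{-\frac{1}{4-2\sigma}\left(s+2-2\sigma+\frac{(2-m)n}{2m}\right)},
\end{align*}
and splitting the exponent shows that the surplus power $|\xi|^{2-2\sigma}$ over the base rate $|\xi|^{4-2\sigma}$ contributes exactly the extra factor $(1+t)^{-\frac{2-2\sigma}{4-2\sigma}}=(1+t)^{-\frac{1-\sigma}{2-\sigma}}$ on top of $(1+t)^{-\gamma(\sigma,n,m,s)}$, which is the asserted rate. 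The main obstacle is the bookkeeping in the second bracket $E_{\lambda}-E_0$: one must check that every higher-order remainder of the eigenvalue expansion, once placed in the exponent and multiplied by $t$, is reabsorbed into a slightly weaker exponential factor while still leaving a clean surplus $|\xi|^{2-2\sigma}$, and that the mode with base $|\xi|^{4-2\sigma}$ genuinely dominates (the modes with bases $|\xi|^2$ and $|\xi|^{2\sigma}$ decaying faster on $Z_{\intt}(\varepsilon)$ for $\sigma\in[0,1)$). Once this is settled the argument closes exactly as in Theorem \ref{Lm-L2 esitmates Fourier law}.
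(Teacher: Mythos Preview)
Your proposal is correct and follows essentially the same approach as the paper. The paper writes the difference as $J_1+J_2+J_3$, where $J_1$ carries the diagonal part $E_{\lambda}-E_0$ (your second bracket) and $J_2,J_3$ carry the $N_2$-contributions (your first bracket $T_2E_{\lambda}T_2^{-1}-E_{\lambda}=[N_2,E_{\lambda}]T_2^{-1}$); it then uses the same mean-value identity for $E_{\lambda}-E_0$, extracts the same surplus factor $|\xi|^{2-2\sigma}$ against the dominant weight $e^{-c|\xi|^{4-2\sigma}t}$, and closes with the identical H\"older/Hausdorff--Young step from Theorem~\ref{Lm-L2 esitmates Fourier law}.
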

\begin{proof}
	According to the representations of solutions for small frequencies in Theorem \ref{Thm. Rep. Sol. Small freq. Fourier} and the definition of the matrices in \eqref{T matrices Case 2.1}, we may obtain
	\begin{align*}
	\chi_{\intt}(\xi)|\xi|^s\left(w^{(0)}-T_0T_1T_{1\frac{1}{2}}\tilde{w}\right)(t,\xi)=\chi_{\intt}(\xi)|\xi|^s\left(J_1(t,|\xi|)+J_2(t,|\xi|)+J_3(t,|\xi|)\right),
	\end{align*}
	where
	\begin{align*}
	J_0(t,|\xi|)&=\diag\left(e^{-\lambda_1(|\xi|)t}-e^{-|\xi|^{4-2\sigma}t},e^{-\lambda_2(|\xi|)t}-e^{-|\xi|^2t},e^{-\lambda_3(|\xi|)t}-e^{-|\xi|^{2\sigma}t}\right),\\
	J_1(t,|\xi|)&=T_0T_1T_{1\frac{1}{2}}J_0(t,|\xi|)T_{1\frac{1}{2}}^{-1}T_1^{-1}T_0^{-1}w_0^{(0)}(\xi),\\
	J_2(t,|\xi|)&=T_0T_1T_{1\frac{1}{2}}N_2(|\xi|)\diag\left(e^{-\lambda_1(|\xi|)t},e^{-\lambda_2(|\xi|)t},e^{-\lambda_3(|\xi|)t}\right)T_2^{-1}T_1^{-1}T_0^{-1}w_0^{(0)}(\xi),\\
	J_3(t,|\xi|)&=-T_0T_1T_{1\frac{1}{2}}T_2\diag\left(e^{-\lambda_1(|\xi|)t},e^{-\lambda_2(|\xi|)t},e^{-\lambda_3(|\xi|)t}\right)T_2^{-1}N_2(|\xi|)T_1^{-1}T_0^{-1}w_0^{(0)}(\xi),
	\end{align*}
	with $N_2(|\xi|)=\ml{O}\big(|\xi|^{2-2\sigma}\big)$ for $\sigma\in[0,1)$.
	
	Let us define
	\begin{align*}
	\begin{aligned}
	h_1(|\xi|)&=|\xi|^{4-2\sigma},&&h_2(|\xi|)=|\xi|^{2},&&h_3(|\xi|)=|\xi|^{2\sigma},\\
	g_1(|\xi|)&=\lambda_1(|\xi|)-h_1(|\xi|),&&g_2(|\xi|)=\lambda_2(|\xi|)-h_2(|\xi|),&&g_3(|\xi|)=\lambda_3(|\xi|)-h_3(|\xi|).
	\end{aligned}
	\end{align*}
	Applying the following formula for $j=1,2,3$:
	\begin{align*}
	e^{-h_j(|\xi|)t+g_j(|\xi|)t}-e^{-h_j(|\xi|)t}=-g_j(|\xi|)te^{-h_j(|\xi|)t}\int_0^1e^{-g_j(|\xi|)t\tau}d\tau,
	\end{align*}
	we can get
	\begin{align*}
	&\left\|\chi_{\intt}(\xi)|\xi|^s\left(w^{(0)}-T_0T_1T_{1\frac{1}{2}}\tilde{w}\right)(t,\xi)\right\|_{L^2(\mb{R}^n)}\\
	&\qquad\qquad=\left\|\chi_{\intt}(\xi)|\xi|^s\left(J_1(t,|\xi|)+J_2(t,|\xi|)+J_3(t,|\xi|)\right)\right\|_{L^2(\mb{R}^n)}\\
	&\qquad\qquad\lesssim\left\|\chi_{\intt}(\xi)|\xi|^{s+2-2\sigma}e^{-|\xi|^{4-2\sigma}t}\right\|_{L^{\frac{2m}{2-m}}(\mb{R}^n)}\left\|\ml{F}^{-1}\left(w_0^{(0)}\right)\right\|_{L^m(\mb{R}^n)}\\
	&\qquad\qquad\lesssim(1+t)^{-\gamma(\sigma,n,m,s)-\frac{1-\sigma}{2-\sigma}}\left\|\ml{F}^{-1}\left(w_0^{(0)}\right)\right\|_{L^m(\mb{R}^n)}.
	\end{align*}
	Thus, the proof is complete.
\end{proof}

\vskip2mm
\begin{thm}\label{Thm Diffusion Lp}
	Let us consider the Cauchy problem \eqref{Eq. first-order Fourier law} with $\sigma\in[0,1)$. We assume $\left(|D|^2u_0,u_1,\theta_0\right)\in \ml{S}(\mb{R}^n)\times \ml{S}(\mb{R}^n)\times \ml{S}(\mb{R}^n).$ Then, we have the following refinement estimates:
	\begin{align*}
	&\left\|\chi_{\intt}(D)\ml{F}_{\xi\rightarrow x}^{-1}\left(w^{(0)}-T_0T_1T_{1\frac{1}{2}}\tilde{w}\right)(t,\cdot)\right\|_{\dot{H}^s_q(\mb{R}^n)}\\
	&\qquad\qquad\lesssim(1+t)^{-\frac{s}{4-2\sigma}-\frac{n}{4-2\sigma}\left(\frac{1}{p}-\frac{1}{q}\right)-\frac{1-\sigma}{2-\sigma}}\left\|\left(|D|^2u_0,u_1,\theta_0\right)\right\|_{L^p(\mb{R}^n)\times L^{p}(\mb{R}^n)\times L^p(\mb{R}^n)},
	\end{align*}
	where $T_0,T_1,T_{1\frac{1}{2}}$ are defined in Lemma \ref{Lemma 2.1} with $s\geq0$, $1\leq p\leq 2\leq q\leq\infty$.
\end{thm}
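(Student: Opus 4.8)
The plan is to recycle, almost verbatim, the decomposition already produced in the proof of Theorem \ref{Thm Diffusion Lm}, and to replace the Parseval--Plancherel step (which is confined to the target exponent $q=2$) by the genuinely $L^p$--$L^q$ estimate \eqref{LpLq small frequencies} of Lemma \ref{Lemma LpLq}. Concretely, I would start again from
\begin{align*}
\chi_{\intt}(\xi)|\xi|^s\left(w^{(0)}-T_0T_1T_{1\frac{1}{2}}\tilde{w}\right)(t,\xi)=\chi_{\intt}(\xi)|\xi|^s\left(J_1+J_2+J_3\right)(t,|\xi|),
\end{align*}
where $J_1$ gathers the symbol differences $J_0=\diag\big(e^{-\lambda_j(|\xi|)t}-e^{-h_j(|\xi|)t}\big)$, while $J_2,J_3$ carry the off-diagonal correction $N_2(|\xi|)=\ml{O}\big(|\xi|^{2-2\sigma}\big)$ encoding the discrepancy between $T_{\sigma,\intt}=T_0T_1T_{1\frac{1}{2}}T_2$ and the truncated product $T_0T_1T_{1\frac{1}{2}}$.

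The decisive point, exactly as in the $L^m$--$L^2$ case, is that on $Z_{\intt}(\varepsilon)$ each of these three terms is dominated in modulus by $C|\xi|^{2-2\sigma}e^{-c|\xi|^{4-2\sigma}t}\big|w_0^{(0)}(\xi)\big|$. For $J_2,J_3$ this follows at once from $N_2(|\xi|)=\ml{O}\big(|\xi|^{2-2\sigma}\big)$ together with $\big|e^{-\lambda_j(|\xi|)t}\big|\le e^{-c|\xi|^{4-2\sigma}t}$, the latter because $4-2\sigma>2>2\sigma$ makes $|\xi|^{4-2\sigma}$ the smallest of the three exponents for $|\xi|\le\varepsilon$. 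For $J_1$ I would invoke the integral identity already displayed in the proof of Theorem \ref{Thm Diffusion Lm} with $g_j=\lambda_j-h_j$; since $g_1=\ml{O}\big(|\xi|^{6-4\sigma}\big)$ and $g_2,g_3=\ml{O}\big(|\xi|^{4-2\sigma}\big)$, absorbing the resulting factor $t$ against the exponential decay shows that $J_1$ is in fact of strictly higher order than $|\xi|^{2-2\sigma}$ and therefore contributes a faster decay.

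Granting this pointwise bound, I would use the uniform invertibility and boundedness of $T_0,T_1,T_{1\frac{1}{2}},T_2$ together with their inverses on small frequencies (Lemma \ref{Lemma 2.1}) to peel off the matrix factors, and then apply \eqref{LpLq small frequencies} with $\kappa_1=4-2\sigma$ and with the regularity index $s$ replaced by $s+(2-2\sigma)$. This produces precisely the claimed rate $-\tfrac{s}{4-2\sigma}-\tfrac{n}{4-2\sigma}\big(\tfrac1p-\tfrac1q\big)-\tfrac{1-\sigma}{2-\sigma}$, while $\big\|\ml{F}^{-1}\big(w_0^{(0)}\big)\big\|_{L^p}$ is controlled by $\big\|\big(|D|^2u_0,u_1,\theta_0\big)\big\|_{L^p\times L^p\times L^p}$ through the definition \eqref{Initialdata} of $U_0$. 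The one feature that genuinely needs care in moving from $L^2$ to $L^q$ is that the eigenvalues $\lambda_j$ are complex; however, since \eqref{LpLq small frequencies} is proved by Hausdorff--Young followed by H\"older on the Fourier side, only the modulus $\big|e^{-\lambda_j t}\big|=e^{-\text{Re }\lambda_j t}$ enters and the oscillatory phases drop out. Thus the main obstacle is not the decay bookkeeping but certifying that the extra power is exactly $2-2\sigma$ — neither weaker nor stronger — which is what pins down the sharp gain $\tfrac{1-\sigma}{2-\sigma}$.
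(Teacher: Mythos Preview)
Your proposal is correct and is exactly what the paper's one-line proof (``We may prove this result immediately by using Lemma \ref{Lemma LpLq}'') leaves implicit: recycle the $J_1+J_2+J_3$ decomposition and the pointwise gain $|\xi|^{2-2\sigma}e^{-c|\xi|^{4-2\sigma}t}$ from Theorem \ref{Thm Diffusion Lm}, then run the Hausdorff--Young/H\"older argument behind \eqref{LpLq small frequencies} with $\kappa_1=4-2\sigma$ and $s\mapsto s+2-2\sigma$. (One harmless overstatement: after absorbing the factor $t$, the first diagonal entry of $J_1$ with $g_1=\ml{O}\big(|\xi|^{6-4\sigma}\big)$ lands exactly at order $|\xi|^{2-2\sigma}$, not strictly higher, but this does not affect the claimed bound.)
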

\begin{proof}
	We may prove this result immediately by using Lemma \ref{Lemma LpLq}.
\end{proof}

\vskip2mm
\begin{rem}
	From the statements of Theorems \ref{Thm Diffusion Lm} and \ref{Thm Diffusion Lp}, we know when $\sigma\in[0,1)$ the thermal dissipation generated by Fourier's law and friction or structural damping have the influence on the reference system at the same time. However, the friction and structural damping have a dominant influence on the decay rate of the estimates.
\end{rem}

\subsection{Diffusion phenomena for the model with $\sigma\in(1,2]$}
Now, we describe diffusion phenomena of the solutions to the Cauchy problem \eqref{Eq. first-order Fourier law} with $\sigma\in(1,2]$ by the reference system as follows:
\begin{align}\label{evolution Fourier law 2}
\left\{\begin{aligned}
&\tilde{u}_t-\diag(y_1,y_2,y_3)\Delta\tilde{u}=0,&&t>0,\,\,x\in\mb{R}^n,\\
&\tilde{u}(0,x)=\ml{F}^{-1}\left(T_0^{-1}w_0^{(0)}(\xi)\right)(x),&&x\in\mb{R}^n,
\end{aligned}\right.
\end{align}
where $y_1,y_2,y_3\in\mb{C}$ are determined in \eqref{Value y1 y2 y3} and $T_0$ are defined in Lemma \ref{Lemma 2.2}. Applying the partial Fourier transform $\tilde{w}(t,\xi)=\ml{F}_{x\rightarrow\xi}\left(\tilde{u}(t,x)\right)$ implies
\begin{align}\label{evolution Four. Fourier law 2}
\left\{\begin{aligned}
&\tilde{w}_t+\diag(y_1,y_2,y_3)|\xi|^2\tilde{w}=0,&&t>0,\,\,\xi\in\mb{R}^n,\\
&\tilde{w}(0,\xi)=T_0^{-1}w_0^{(0)}(\xi),&&\xi\in\mb{R}^n.
\end{aligned}\right.
\end{align}
The solution to \eqref{evolution Four. Fourier law 2} is explicitly given by
\begin{align}\label{evolution solution Fourier law 2}
\tilde{w}(t,\xi)=\diag\left(e^{-y_1|\xi|^{2}t},e^{-y_2|\xi|^{2}t},e^{-y_3|\xi|^{2}t}\right)T_0^{-1}w_0^{(0)}(\xi).
\end{align}

\vskip2mm
\begin{rem}
	From \eqref{evolution Fourier law 2}, the reference system is consisted of evolution equations
	\begin{align*}
	\tilde{u}^{(j)}_t-\text{Re }y_j\Delta \tilde{u}^{(j)}-i\text{Im }y_j\Delta \tilde{u}^{(j)}=0,
	\end{align*}
	for $j=1,2,3.$ Here we interpret this effect as classical diffusion phenomenon.  Moreover, we have to point out that the reference system \eqref{evolution Fourier law 2} is consisted of heat systems and Schr\"odinger systems due to the fact that $\text{Re } y_j>0$ and $\text{Im }y_j\neq0$ for all $j=1,2,3$.
\end{rem}

\vskip2mm
\begin{rem}
	If one considers the reference system as the following heat system only:
	\begin{align}\label{Sup refer 01}
	\tilde{u}_t-\diag(\text{Re }y_1,\text{Re }y_2,\text{Re }y_3)\Delta\tilde{u}=0,
	\end{align}
	or the following Schr\"odinger system only:
	\begin{align}\label{Sup refer 02}
	\tilde{u}_t-i\diag(\text{Im }y_1,\text{Im }y_2,\text{Im }y_3)\Delta\tilde{u}=0,
	\end{align}
	we cannot observe any diffusion structure for $\sigma\in(1,2]$. In other words, comparing with Theorems \ref{Thm Diffusion Lm} and \ref{Thm Diffusion Lp}, respectively, we observe that there is not any improvement in the decay estimates for the difference between the solutions to the system \eqref{Eq. first-order Fourier law} with $\sigma\in(1,2]$ and the solutions to the reference systems \eqref{Sup refer 01} or \eqref{Sup refer 02}.
\end{rem}
\vskip2mm
Similar as last subsection, one can prove the following results.

\vskip2mm
\begin{thm}\label{Thm Diffusion Lm 1}
	Let us consider the Cauchy problem \eqref{Eq. first-order Fourier law} with $\sigma\in(1,2]$. We assume $\left(|D|^2u_0,u_1,\theta_0\right)\in L^m(\mb{R}^n)\times L^m(\mb{R}^n)\times L^m(\mb{R}^n)$ with $m\in[1,2]$. Then, we have the following refinement estimates:
	\begin{align*}
	&\left\|\chi_{\intt}(D)\ml{F}_{\xi\rightarrow x}^{-1}\left(w^{(0)}-T_0\tilde{w}\right)(t,\cdot)\right\|_{\dot{H}^s(\mb{R}^n)}\\
	&\qquad\qquad\lesssim(1+t)^{-\frac{(2-m)n+2ms}{4m}-(\sigma-1)}\left\|\left(|D|^2u_0,u_1,\theta_0\right)\right\|_{L^m(\mb{R}^n)\times L^m(\mb{R}^n)\times L^m(\mb{R}^n)},
	\end{align*}
	where $T_0$ are defined in Lemma \ref{Lemma 2.2}.
\end{thm}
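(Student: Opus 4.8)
The plan is to mirror the argument for Theorem \ref{Thm Diffusion Lm}, now in the regime of Case 2.2 treated in Lemma \ref{Lemma 2.2}. First I would start from the small-frequency representation in Theorem \ref{Thm. Rep. Sol. Small freq. Fourier} for $\sigma\in(1,2]$, where the characteristic roots satisfy $\lambda_j(|\xi|)=y_j|\xi|^2$ modulo $\ml{O}\big(|\xi|^{2\sigma}\big)$, and compare it with the reference solution \eqref{evolution solution Fourier law 2}, whose components are $e^{-y_j|\xi|^2t}$. Writing the full diagonalizer produced by the further diagonalization steps of Lemma \ref{Lemma 2.2} as $T_{\sigma,\intt}=T_0\big(I_3+N_1(|\xi|)+\cdots\big)$, where the first correction obeys $N_1(|\xi|)=\ml{O}\big(|\xi|^{2(\sigma-1)}\big)$, I would decompose $\chi_{\intt}(\xi)|\xi|^s\big(w^{(0)}-T_0\tilde{w}\big)$ into two groups of terms in complete analogy with $J_1$ and $J_2+J_3$ in the proof of Theorem \ref{Thm Diffusion Lm}: one group capturing the discrepancy between the true exponentials $e^{-\lambda_j(|\xi|)t}$ and the leading profiles $e^{-y_j|\xi|^2t}$, and one group capturing the discrepancy between $T_{\sigma,\intt}$ and its leading part $T_0$.

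For the first group I would invoke the identity
\begin{align*}
e^{-h_j(|\xi|)t+g_j(|\xi|)t}-e^{-h_j(|\xi|)t}=-g_j(|\xi|)\,t\,e^{-h_j(|\xi|)t}\int_0^1e^{-g_j(|\xi|)t\tau}\,d\tau,
\end{align*}
now with $h_j(|\xi|)=y_j|\xi|^2$ and $g_j(|\xi|)=\lambda_j(|\xi|)-y_j|\xi|^2=\ml{O}\big(|\xi|^{2\sigma}\big)$. The decisive observation is that $|g_j(|\xi|)|\,t\lesssim|\xi|^{2\sigma}t=|\xi|^2\,|\xi|^{2(\sigma-1)}\,t$, so that after splitting off the bounded factor $|\xi|^2t\,e^{-c|\xi|^2t}\lesssim1$ the integrand is controlled by $|\xi|^{s+2(\sigma-1)}e^{-c|\xi|^2t}$. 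For the second group, the correction $N_1(|\xi|)=\ml{O}\big(|\xi|^{2(\sigma-1)}\big)$ supplies exactly the same extra power $|\xi|^{2(\sigma-1)}$, precisely as $N_2(|\xi|)=\ml{O}\big(|\xi|^{2-2\sigma}\big)$ did in the case $\sigma\in[0,1)$. Hence both groups are bounded by $\big\|\chi_{\intt}(\xi)|\xi|^{s+2(\sigma-1)}e^{-c|\xi|^2t}\big\|_{L^{\frac{2m}{2-m}}(\mb{R}^n)}\,\big\|\ml{F}^{-1}(w_0^{(0)})\big\|_{L^m(\mb{R}^n)}$ after H\"older's and the Hausdorff--Young inequalities, exactly as in Theorems \ref{Lm-L2 esitmates Fourier law} and \ref{Thm Diffusion Lm}.

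The main obstacle I anticipate is controlling the auxiliary integral $\int_0^1e^{-g_j(|\xi|)t\tau}\,d\tau$, since $g_j$ is complex and a priori $\mathrm{Re}\,g_j$ need not be sign-definite. The resolution exploits that for $\sigma>1$ the perturbation is of strictly higher order in the small zone: because $|g_j(|\xi|)|\lesssim|\xi|^{2\sigma}\le\varepsilon^{2(\sigma-1)}|\xi|^2$ on $Z_{\intt}(\varepsilon)$, I would shrink $\varepsilon$ so that $|g_j(|\xi|)|\le\tfrac12\,\mathrm{Re}\,y_j\,|\xi|^2$, whence $\big|e^{-g_j(|\xi|)t\tau}\big|\le e^{\frac12\mathrm{Re}\,y_j|\xi|^2t}$ is absorbed by half of the dominant decay $e^{-\mathrm{Re}\,y_j|\xi|^2t}$ and the integral stays uniformly bounded. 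Finally, evaluating the remaining $L^{\frac{2m}{2-m}}$ norm with dominant exponent $\alpha_0=2$ produces, for $t\ge1$, the rate $t^{-\frac{s}{2}-\frac{(2-m)n}{4m}-(\sigma-1)}=t^{-\frac{(2-m)n+2ms}{4m}-(\sigma-1)}$, while the $0\le t\le1$ range supplies the $(1+t)$-form; the Parseval--Plancherel theorem then yields the stated estimate.
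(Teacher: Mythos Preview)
Your proposal is correct and is precisely the approach the paper intends: the paper itself gives no separate proof, stating only that the argument is ``similar as last subsection,'' and your adaptation of the $J_1,J_2,J_3$ decomposition from Theorem \ref{Thm Diffusion Lm} to the Case 2.2 setting (with $h_j=y_j|\xi|^2$, $g_j=\ml{O}(|\xi|^{2\sigma})$, and correction $N_1(|\xi|)=\ml{O}(|\xi|^{2(\sigma-1)})$) is exactly that similar argument. Your handling of the possibly sign-indefinite $\mathrm{Re}\,g_j$ by absorbing it into half of the dominant decay via a shrinkage of $\varepsilon$ is a correct and useful clarification that the paper leaves implicit.
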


\vskip2mm
\begin{thm}\label{Thm Diffusion Lp 2}
	Let us consider the Cauchy problem \eqref{Eq. first-order Fourier law} with $\sigma\in(1,2]$. We assume $\left(|D|^2u_0,u_1,\theta_0\right)\in \ml{S}(\mb{R}^n)\times \ml{S}(\mb{R}^n)\times \ml{S}(\mb{R}^n).$ Then, we have the next refinement estimates:
	\begin{align*}
	&\left\|\chi_{\intt}(D)\ml{F}_{\xi\rightarrow x}^{-1}\left(w^{(0)}-T_0\tilde{w}\right)(t,\cdot)\right\|_{\dot{H}^s_q(\mb{R}^n)}\\
	&\qquad\qquad\lesssim(1+t)^{-\frac{s}{2}-\frac{n}{2}\left(\frac{1}{p}-\frac{1}{q}\right)-(\sigma-1)}\left\|\left(|D|^2u_0,u_1,\theta_0\right)\right\|_{L^p(\mb{R}^n)\times L^{p}(\mb{R}^n)\times L^p(\mb{R}^n)},
	\end{align*}
	where $T_0$ are defined in Lemma \ref{Lemma 2.2} with $s\geq0$, $1\leq p\leq 2\leq q\leq\infty$.
\end{thm}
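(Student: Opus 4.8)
The plan is to adapt the argument of Theorem~\ref{Thm Diffusion Lp} to Case~2.2, exploiting that for $\sigma\in(1,2]$ the modulo term in the small-frequency expansion of Theorem~\ref{Thm. Rep. Sol. Small freq. Fourier} is of order $\ml{O}\big(|\xi|^{2\sigma}\big)$, hence \emph{subordinate} to the leading $|\xi|^2$ behaviour of the characteristic roots $\lambda_j(|\xi|)=y_j|\xi|^2$. Since for $\xi\in Z_{\midd}(\varepsilon,N)\cup Z_{\extt}(N)$ both $w^{(0)}$ and $T_0\tilde{w}$ enjoy exponential decay (Lemma~\ref{Lemma 2.4} together with Theorem~\ref{Thm. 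Rep. Sol. Large freq. Fourier}), only small frequencies matter. First I would insert the representation from Theorem~\ref{Thm. Rep. Sol. Small freq. Fourier}, the identity $T_{\sigma,\intt}=T_0$ from \eqref{T matrices Case 2.2}, and the explicit reference profile \eqref{evolution solution Fourier law 2}, so that $\chi_{\intt}(\xi)\big(w^{(0)}-T_0\tilde{w}\big)(t,\xi)$ is written through the full diagonalizer of Lemma~\ref{Lemma 2.2} acting on $w_0^{(0)}(\xi)$.

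Next I would split this difference into two contributions. The first stems from the correction matrices produced by the further diagonalization steps of Lemma~\ref{Lemma 2.2}: writing the full diagonalizer as $T_0\big(I_3+\ml{O}\big(|\xi|^{2\sigma-2}\big)\big)$, its deviation from $T_0$ carries a factor $\ml{O}\big(|\xi|^{2\sigma-2}\big)$. The second stems from the scalar exponentials: setting $\lambda_j(|\xi|)=y_j|\xi|^2+g_j(|\xi|)$ with $g_j(|\xi|)=\ml{O}\big(|\xi|^{2\sigma}\big)$, I would use the elementary identity
\begin{align*}
e^{-\lambda_j(|\xi|)t}-e^{-y_j|\xi|^2t}=-g_j(|\xi|)\,t\,e^{-y_j|\xi|^2t}\int_0^1e^{-g_j(|\xi|)t\tau}\,d\tau.
\end{align*}
Because $\text{Re }g_j(|\xi|)=\ml{O}\big(|\xi|^{2\sigma}\big)$ is negligible compared with $\text{Re }y_j\,|\xi|^2>0$ for $|\xi|\leq\varepsilon$ (here $2\sigma>2$ is decisive), the integral remains uniformly bounded and one bounds $|g_j(|\xi|)|\,t\,e^{-\text{Re }y_j|\xi|^2t}\lesssim|\xi|^{2\sigma-2}e^{-c|\xi|^2t}$, so that this contribution also gains a factor $\ml{O}\big(|\xi|^{2\sigma-2}\big)$.

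Both contributions then reduce to symbols of the form $\chi_{\intt}(\xi)\,|\xi|^{\,s+2\sigma-2}\,e^{-c|\xi|^2t}$ acting on $w_0^{(0)}$, up to the bounded smooth matrices $T_0^{\pm1}$. I would then apply Lemma~\ref{Lemma LpLq}, estimate \eqref{LpLq small frequencies}, with $\kappa_1=2$ and with $s$ replaced by $s+2\sigma-2$, which yields
\begin{align*}
(1+t)^{-\frac{s+2\sigma-2}{2}-\frac{n}{2}\left(\frac{1}{p}-\frac{1}{q}\right)}=(1+t)^{-\frac{s}{2}-\frac{n}{2}\left(\frac{1}{p}-\frac{1}{q}\right)-(\sigma-1)}.
\end{align*}
Transferring back from $w_0^{(0)}$ to the data via \eqref{Solution}--\eqref{Initialdata} and $\big\|\ml{F}^{-1}(w_0^{(0)})\big\|_{L^p}\lesssim\big\|(|D|^2u_0,u_1,\theta_0)\big\|_{L^p\times L^p\times L^p}$ gives the asserted estimate.

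The main obstacle I anticipate is the control of $\int_0^1e^{-g_j(|\xi|)t\tau}\,d\tau$ for \emph{complex} $g_j$: one must verify that $\text{Re }g_j(|\xi|)$ does not overwhelm the Gaussian weight $e^{-\text{Re }y_j|\xi|^2t}$, which is precisely where the condition $\sigma>1$ enters (so that $|\xi|^{2\sigma}\ll|\xi|^2$), in contrast to the $\sigma\in[0,1)$ case where the relative sizes of the exponents were reversed. A secondary, purely bookkeeping difficulty is to confirm that every correction matrix $N_j$ generated by the unspecified \emph{further steps} of the diagonalization in Lemma~\ref{Lemma 2.2} is indeed of order $\ml{O}\big(|\xi|^{2\sigma-2}\big)$, so that the sharp gain $(\sigma-1)$ is governed by the leading correction alone and is not degraded by higher-order steps.
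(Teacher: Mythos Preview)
Your proposal is correct and follows essentially the same approach as the paper: the paper's proof of Theorem~\ref{Thm Diffusion Lp 2} is only the remark ``Similar as last subsection, one can prove the following results,'' meaning the $J_1,J_2,J_3$-type decomposition of Theorem~\ref{Thm Diffusion Lm} (adapted to Case~2.2 where $T_{\sigma,\intt}=T_0$ and the first correction matrix is $\ml{O}\big(|\xi|^{2\sigma-2}\big)$) followed by Lemma~\ref{Lemma LpLq} with $\kappa_1=2$. Your two anticipated obstacles are non-issues exactly for the reasons you give: $2\sigma>2$ guarantees $\text{Re }g_j(|\xi|)=\ml{O}\big(|\xi|^{2\sigma}\big)$ is absorbed by $e^{-\frac{1}{2}\text{Re }y_j|\xi|^2t}$, and the further diagonalization steps of Lemma~\ref{Lemma 2.2} produce $N_j=\ml{O}\big(|\xi|^{2(\sigma-1)j}\big)$, so the leading correction indeed governs the gain.
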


\vskip2mm
\begin{rem}
	From Theorems \ref{Thm Diffusion Lm} \ref{Thm Diffusion Lp}, \ref{Thm Diffusion Lm 1} and \ref{Thm Diffusion Lp 2}, the diffusion structure appears for the Cauchy problem \eqref{Eq. thermo plate Fourier} with $\sigma\in[0,1)\cup(1,2]$. More precisely, comparing Theorems \ref{Lm-L2 esitmates Fourier law} and \ref{Lp-Lq esitmates Fourier law} with Theorems \ref{Thm Diffusion Lm} and \ref{Thm Diffusion Lp}, respectively, we observe that the decay rate can be improved by $-\frac{1-\sigma}{2-\sigma}$ if $\sigma\in[0,1)$ as $t\rightarrow \infty$. In addition, comparing Theorems \ref{Lm-L2 esitmates Fourier law} and \ref{Lp-Lq esitmates Fourier law} with Theorems \ref{Thm Diffusion Lm 1} and \ref{Thm Diffusion Lp 2}, respectively, we observe that the decay rate can be improved by $-(\sigma-1)$ if $\sigma\in(1,2]$ as $t\rightarrow \infty$.
\end{rem}

\vskip2mm
\begin{rem}
	According to Theorems \ref{Thm Diffusion Lm 1} and \ref{Thm Diffusion Lp 2}, the thermal dissipation generated by Fourier's law has a dominant influence in comparison with structural damping on diffusion phenomena when $\sigma\in(1,2]$.
\end{rem}

\section{Asymptotic profiles of solutions}\label{Section Asymptotic behavior}

Our main purpose in this section is to give asymptotic profiles of solutions to the Cauchy problem \eqref{Eq. thermo plate Fourier} in a framework of the weighted $L^1$ data. The idea is motivated by \cite{Ikehata2013,Ikehata2014}.

In Section \ref{Subsec Fouier estimate} we derived the following estimates for upper bounds of solutions with weighted $L^1$ data:
\begin{align*}
\left\|U(t,\cdot)\right\|_{\dot{H}^s(\mb{R}^n)}\lesssim&(1+t)^{-\gamma(\sigma,n,1,s+1)}\left\|U_0\right\|_{H^s(\mb{R}^n)\cap L^{1,1}(\mb{R}^n)}+(1+t)^{-\gamma(\sigma,n,1,s)}\Big|\int_{\mb{R}^n}U_0(x)dx\Big|,
\end{align*}
where $\sigma\in[0,2]$, $n\geq1$, $s\geq0$. Here the solution $U(t,x)$ and data $U_0(x)$ are defined in \eqref{Solution} and \eqref{Initialdata}, respectively.

The natural questions are as follows. What is the estimate for the lower bounds of $\|U(t,\cdot)\|_{\dot{H}^s(\mb{R}^n)}$ in a framework of weighted $L^1$ data? Does this estimate is sharp? To answer these questions, we show some useful lemmas initially. Here Lemmas \ref{Pre 01} and \ref{Pre 02} have been proved in the papers \cite{HorbachIkehataCharao2016,Ikehata2004}.

\vskip2mm
\begin{lem}\label{Pre 01}
	Let $f\in L^1(\mb{R}^n)$. Then, we can expand $\hat{f}(\xi)$ by
	\begin{align*}
	\hat{f}(\xi)=A_f(\xi)-iB_f(\xi)+P_f\,\,\,\,\text{for all}\,\,\,\,\xi\in\mb{R}^n,
	\end{align*}
	where
	\begin{align*}
	A_f(\xi)&:=(2\pi)^{-\frac{n}{2}}\int_{\mb{R}^n}(\cos(x\cdot\xi)-1)f(x)dx,\\
	B_f(\xi)&:=(2\pi)^{-\frac{n}{2}}\int_{\mb{R}^n}\sin(x\cdot\xi)f(x)dx,\\
	P_f&:=(2\pi)^{-\frac{n}{2}}\int_{\mb{R}^n}f(x)dx.
	\end{align*}
\end{lem}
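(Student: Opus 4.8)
The plan is to prove the stated decomposition by a direct, elementary computation starting from the definition of the Fourier transform with the normalization constant $(2\pi)^{-n/2}$ that is implicit in the statement. First I would write
\begin{align*}
\hat{f}(\xi)=(2\pi)^{-\frac{n}{2}}\int_{\mb{R}^n}e^{-ix\cdot\xi}f(x)\,dx,
\end{align*}
and then invoke Euler's formula $e^{-ix\cdot\xi}=\cos(x\cdot\xi)-i\sin(x\cdot\xi)$ to split the integrand into its real and imaginary parts. This immediately yields
\begin{align*}
\hat{f}(\xi)=(2\pi)^{-\frac{n}{2}}\int_{\mb{R}^n}\cos(x\cdot\xi)f(x)\,dx-i(2\pi)^{-\frac{n}{2}}\int_{\mb{R}^n}\sin(x\cdot\xi)f(x)\,dx.
\end{align*}
The imaginary part is already exactly $-iB_f(\xi)$ by the definition of $B_f$, so no further manipulation is needed there.

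For the real part, the key algebraic step is to add and subtract the constant $1$ inside the cosine integral in order to isolate the zero-frequency mass $P_f$. Writing $\cos(x\cdot\xi)=(\cos(x\cdot\xi)-1)+1$ and using linearity of the integral gives
\begin{align*}
(2\pi)^{-\frac{n}{2}}\int_{\mb{R}^n}\cos(x\cdot\xi)f(x)\,dx
=(2\pi)^{-\frac{n}{2}}\int_{\mb{R}^n}(\cos(x\cdot\xi)-1)f(x)\,dx
+(2\pi)^{-\frac{n}{2}}\int_{\mb{R}^n}f(x)\,dx,
\end{align*}
which is precisely $A_f(\xi)+P_f$. Combining the real and imaginary parts then produces the claimed identity $\hat{f}(\xi)=A_f(\xi)-iB_f(\xi)+P_f$.

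The only genuine point requiring justification—and the step I would treat as the main (though minor) obstacle—is the integrability needed to split the integral into the three separate pieces. Since $f\in L^1(\mb{R}^n)$, the integrals defining $\hat{f}$, $B_f$, and $P_f$ converge absolutely because $|\cos(x\cdot\xi)|\le1$, $|\sin(x\cdot\xi)|\le1$, and $|1|\le1$. For $A_f$ one uses the bound $|\cos(x\cdot\xi)-1|\le2$, so that $|(\cos(x\cdot\xi)-1)f(x)|\le2|f(x)|\in L^1(\mb{R}^n)$, guaranteeing absolute convergence of the $A_f$ integral as well. With each piece separately absolutely convergent, the splitting by linearity is legitimate for every fixed $\xi\in\mb{R}^n$, and the decomposition holds pointwise in $\xi$ as asserted. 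This completes the proof.
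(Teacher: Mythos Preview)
Your proof is correct and complete: the decomposition follows immediately from Euler's formula together with the add-and-subtract trick for the cosine term, and the $L^1$ hypothesis justifies splitting the integral. The paper does not give its own proof of this lemma but simply cites it from \cite{HorbachIkehataCharao2016,Ikehata2004}; your direct argument is exactly the standard one behind those references.
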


\vskip2mm
\begin{lem}\label{Pre 02}
	Let us consider $A_f(\xi)$ and $B_f(\xi)$ defined in Lemma \ref{Pre 01}. Then, we have the following estimates for them:
	\begin{align*}
	|A_f(\xi)|&\lesssim |\xi|\|f\|_{L^{1,1}(\mb{R}^n)},\\
	|B_f(\xi)|&\lesssim |\xi|\|f\|_{L^{1,1}(\mb{R}^n)}.
	\end{align*}
\end{lem}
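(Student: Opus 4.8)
The plan is to obtain both bounds by purely pointwise estimates on the oscillatory factors inside the two integrals, since neither $A_f$ nor $B_f$ requires anything beyond the absolute integrability of $(1+|x|)f(x)$ guaranteed by $f \in L^{1,1}(\mb{R}^n)$.

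First I would record the two elementary inequalities
\begin{align*}
|\sin\theta| \le |\theta| \quad\text{and}\quad |\cos\theta - 1| \le |\theta| \qquad \text{for all } \theta \in \mb{R}.
\end{align*}
The first is standard; for the second I would write $\cos\theta - 1 = -2\sin^2(\theta/2)$ and estimate $2\sin^2(\theta/2) = 2|\sin(\theta/2)|^2 \le 2 \cdot |\theta/2| \cdot 1 = |\theta|$, using $|\sin(\theta/2)| \le |\theta/2|$ for one factor and $|\sin(\theta/2)| \le 1$ for the other. Applying these with $\theta = x\cdot\xi$ together with the Cauchy--Schwarz bound $|x\cdot\xi| \le |x|\,|\xi|$ yields the pointwise estimates $|\cos(x\cdot\xi)-1| \le |x|\,|\xi|$ and $|\sin(x\cdot\xi)| \le |x|\,|\xi|$.

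Then I would insert these into the definitions of $A_f$ and $B_f$, move the absolute value inside the integral, factor out $|\xi|$, and use $|x| \le 1 + |x|$ to recognize the weighted norm:
\begin{align*}
|A_f(\xi)| \le (2\pi)^{-\frac{n}{2}} \int_{\mb{R}^n} |x|\,|\xi|\,|f(x)|\,dx \le (2\pi)^{-\frac{n}{2}} |\xi| \int_{\mb{R}^n} (1+|x|)|f(x)|\,dx = (2\pi)^{-\frac{n}{2}} |\xi|\, \|f\|_{L^{1,1}(\mb{R}^n)},
\end{align*}
and identically for $B_f(\xi)$. Absorbing the constant $(2\pi)^{-\frac{n}{2}}$ into $\lesssim$ finishes both claims.

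There is essentially no obstacle here; the only point demanding a little care is the cosine estimate, where the subtracted constant $1$ is indispensable. Without it one can only control $|\hat{f}(\xi)|$ by the plain $L^1$ norm with no gain of the factor $|\xi|$, so the decomposition of Lemma \ref{Pre 01}, which isolates $P_f$ precisely to enable this cancellation, is exactly what makes the linear-in-$|\xi|$ rate available.
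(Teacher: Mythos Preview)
Your argument is correct and is precisely the standard one: the paper does not supply its own proof of this lemma but simply cites \cite{HorbachIkehataCharao2016,Ikehata2004}, where the same pointwise trigonometric estimates $|\sin\theta|\le|\theta|$ and $|\cos\theta-1|\le|\theta|$ followed by $|x\cdot\xi|\le|x|\,|\xi|$ are used. There is nothing to add.
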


\vskip2mm
\begin{lem}\label{Prem 03}
	Let us consider $s\geq0$ and $\alpha_2>0$. Then, the following estimate holds:
	\begin{align*}
	\left\|\chi_{\intt}(\xi)|\xi|^se^{-c|\xi|^{\alpha_2}t}\right\|_{L^2(\mb{R}^n)}\gtrsim (1+t)^{-\frac{2s+n}{2\alpha_2}},
	\end{align*}
	where the constant $c>0$.
\end{lem}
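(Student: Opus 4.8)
The plan is to reduce the claim to an elementary one-dimensional integral estimate via polar coordinates and to extract the decay factor by a single scaling substitution. First I would square the norm and observe that since $\chi_{\intt}$ is a smooth cut-off equal to $1$ on a ball $\{|\xi|\le\varepsilon_0\}$ with some $\varepsilon_0>0$, one has the lower bound
\begin{align*}
\left\|\chi_{\intt}(\xi)|\xi|^se^{-c|\xi|^{\alpha_2}t}\right\|_{L^2(\mb{R}^n)}^2\gtrsim\int_{|\xi|\le\varepsilon_0}|\xi|^{2s}e^{-2c|\xi|^{\alpha_2}t}d\xi\gtrsim\int_0^{\varepsilon_0}r^{2s+n-1}e^{-2cr^{\alpha_2}t}dr,
\end{align*}
where I pass to polar coordinates in the last step.

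Next I would perform the substitution $\eta=r\,t^{1/\alpha_2}$ (for $t>0$), which turns $r^{\alpha_2}t$ into $\eta^{\alpha_2}$ and yields
\begin{align*}
\int_0^{\varepsilon_0}r^{2s+n-1}e^{-2cr^{\alpha_2}t}dr=t^{-\frac{2s+n}{\alpha_2}}\int_0^{\varepsilon_0 t^{1/\alpha_2}}\eta^{2s+n-1}e^{-2c\eta^{\alpha_2}}d\eta.
\end{align*}
For $t\ge1$ the upper endpoint satisfies $\varepsilon_0 t^{1/\alpha_2}\ge\varepsilon_0$, so the remaining integral is bounded below by the fixed positive constant $\int_0^{\varepsilon_0}\eta^{2s+n-1}e^{-2c\eta^{\alpha_2}}d\eta$; hence the right-hand side is $\gtrsim t^{-\frac{2s+n}{\alpha_2}}\gtrsim(1+t)^{-\frac{2s+n}{\alpha_2}}$.

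For the complementary range $0\le t\le1$ the factor $e^{-2cr^{\alpha_2}t}$ is bounded below by $e^{-2c\varepsilon_0^{\alpha_2}}$ on $[0,\varepsilon_0]$, so the integral is bounded below by a positive constant, which matches $(1+t)^{-\frac{2s+n}{\alpha_2}}$ since the latter is comparable to $1$ on $[0,1]$. Combining the two ranges and taking square roots produces the claimed estimate. The only point requiring care is that the $\eta$-integral both converges and stays bounded away from zero uniformly in $t$; this is precisely why the scaling cleanly separates the $t$-dependent prefactor $t^{-(2s+n)/\alpha_2}$ from a time-independent positive constant, and it is the crux of obtaining a \emph{lower} (rather than upper) bound.
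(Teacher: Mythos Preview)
Your proof is correct and follows essentially the same route as the paper: square the norm, pass to polar coordinates, and rescale the radial variable to extract the factor $t^{-(2s+n)/\alpha_2}$. Your treatment is in fact slightly more careful than the paper's, since you explicitly handle the smooth cut-off by restricting to a smaller ball where $\chi_{\intt}\equiv1$ and you split into the ranges $t\ge1$ and $0\le t\le1$ to pass from $t$ to $(1+t)$.
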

\begin{proof}
	By directly calculation, we obtain
	\begin{align*}
	\left\|\chi_{\intt}(\xi)|\xi|^se^{-c|\xi|^{\alpha_2}t}\right\|_{L^2(\mb{R}^n)}^2&=\int_{\mb{R}^n}\chi_{\intt}^2(\xi)|\xi|^{2s}e^{-2c|\xi|^{\alpha_2}t}d\xi\\
	&=\int_{0}^{\varepsilon}\int_{|\xi|=r}r^{2s}e^{-2cr^{\alpha_2}t}dS_{\xi}dr\\
	&=\omega_n\int_0^{\varepsilon}r^{2s+n-1}e^{-2cr^{\alpha_2}t}dr,
	\end{align*}
	where $\omega_n=\int_{|\omega|=1}d\omega=\frac{2\pi^{\frac{n}{2}}}{\Gamma(\frac{n}{2})}$. By using the ansatz $r^{\alpha_2}t=\tau$, we complete the proof of the lemma.
\end{proof}

\vskip2mm
\begin{thm}\label{Thm asym 01}
	Let us assume $U_0\in H^s(\mb{R}^n)\cap L^{1,1}(\mb{R}^n)$ with $|P_{U_0}|\neq0$, where $s\geq0$. Then, the solution $U=U(t,x)$ to the Cauchy problem \eqref{Eq. thermo plate Fourier} with $\sigma\in[0,1)$ satisfies the following estimates for $t\gg1$:
	\begin{align*}
	t^{-\frac{n+2s}{4(2-\sigma)}}\left|P_{U_0}\right|\lesssim\left\|U(t,\cdot)\right\|_{\dot{H}^s(\mb{R}^n)}\lesssim t^{-\frac{n+2s}{4(2-\sigma)}}\|U_0\|_{H^s(\mb{R}^n)\cap L^{1,1}(\mb{R}^n)}.
	\end{align*}
\end{thm}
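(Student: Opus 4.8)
The upper bound is essentially already available. Applying Theorem \ref{Energy estimates L1delta} with $\delta=1$ and recalling that $\gamma(\sigma,n,1,s)=\frac{n+2s}{4(2-\sigma)}$ for $\sigma\in[0,1)$, whereas $\gamma(\sigma,n,1,s+1)=\gamma(\sigma,n,1,s)+\frac{1}{2(2-\sigma)}$ is strictly larger, the $L^{1,1}$ term on the right-hand side decays faster than the $P_{U_0}$ term for $t\gg1$. Since $|P_{U_0}|\leq\|U_0\|_{L^1(\mb{R}^n)}\leq\|U_0\|_{L^{1,1}(\mb{R}^n)}$, both contributions are dominated by $t^{-\frac{n+2s}{4(2-\sigma)}}\|U_0\|_{H^s(\mb{R}^n)\cap L^{1,1}(\mb{R}^n)}$, which is exactly the claimed upper estimate. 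The whole difficulty therefore lies in the lower bound, and the plan is to follow the weighted-$L^1$ strategy of \cite{Ikehata2014,Chen2018}.

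First I would reduce to small frequencies. By the Parseval--Plancherel theorem $\|U(t,\cdot)\|_{\dot{H}^s(\mb{R}^n)}=\||\xi|^sw^{(0)}(t,\xi)\|_{L^2(\mb{R}^n)}\geq\|\chi_{\intt}(\xi)|\xi|^sw^{(0)}(t,\xi)\|_{L^2(\mb{R}^n)}$, while the exponential bounds for $\xi\in Z_{\midd}(\varepsilon,N)\cup Z_{\extt}(N)$ already used in the proof of Theorem \ref{Lm-L2 esitmates Fourier law} make the remaining zones negligible for $t\gg1$. On $Z_{\intt}(\varepsilon)$ I would insert the representation of Theorem \ref{Thm. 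Rep. Sol. Small freq. Fourier}. For $\sigma\in[0,1)$ the exponent $4-2\sigma$ is the largest of $\{4-2\sigma,2,2\sigma\}$, so among $e^{-\lambda_1(|\xi|)t},e^{-\lambda_2(|\xi|)t},e^{-\lambda_3(|\xi|)t}$ the factor $e^{-\lambda_1(|\xi|)t}$ with $\lambda_1(|\xi|)=|\xi|^{4-2\sigma}$ modulo $\ml{O}\big(|\xi|^{6-4\sigma}\big)$ decays slowest and governs the leading behaviour. Writing $w^{(0)}=e^{-\lambda_1(|\xi|)t}\Pi_1(\xi)w_0^{(0)}(\xi)+(\text{faster decaying})$, with the spectral projection $\Pi_1(\xi)=T_{\sigma,\intt}\diag(1,0,0)T_{\sigma,\intt}^{-1}$, isolates this mode.

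Next I would feed in the low-frequency expansion of the data. By Lemma \ref{Pre 01}, applied componentwise, $w_0^{(0)}(\xi)=P_{w_0}+\big(A_{U_0}(\xi)-iB_{U_0}(\xi)\big)$ with $P_{w_0}=(2\pi)^{-\frac{n}{2}}P_{U_0}$, and Lemma \ref{Pre 02} gives $|A_{U_0}(\xi)|+|B_{U_0}(\xi)|\lesssim|\xi|\,\|U_0\|_{L^{1,1}(\mb{R}^n)}$. Since the explicit form $T_{\sigma,\intt}=T_0T_1T_{1\frac{1}{2}}T_2$ of Lemma \ref{Lemma 2.1} yields $\Pi_1(\xi)=\Pi_1(0)+\ml{O}\big(|\xi|^{2-2\sigma}\big)$ with $\Pi_1(0)=T_0\diag(1,0,0)T_0^{-1}$, the principal part of the slowest mode is $e^{-|\xi|^{4-2\sigma}t}\Pi_1(0)P_{w_0}$ and every error carries a strictly positive extra power $\kappa:=\min\{1,2-2\sigma\}>0$ of $|\xi|$. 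A reverse triangle inequality then gives
\begin{align*}
\|U(t,\cdot)\|_{\dot{H}^s(\mb{R}^n)}\gtrsim\big|\Pi_1(0)P_{w_0}\big|\left\|\chi_{\intt}(\xi)|\xi|^se^{-c|\xi|^{4-2\sigma}t}\right\|_{L^2(\mb{R}^n)}-\left\|\chi_{\intt}(\xi)|\xi|^{s+\kappa}e^{-c|\xi|^{4-2\sigma}t}\right\|_{L^2(\mb{R}^n)}\|U_0\|_{H^s(\mb{R}^n)\cap L^{1,1}(\mb{R}^n)}.
\end{align*}
Lemma \ref{Prem 03} with $\alpha_2=4-2\sigma$ bounds the first norm below by $t^{-\frac{n+2s}{4(2-\sigma)}}$, while the computation in the proof of Theorem \ref{Lm-L2 esitmates Fourier law} bounds the subtracted norm above by $t^{-\frac{n+2s}{4(2-\sigma)}-\frac{\kappa}{4-2\sigma}}$, which is strictly faster. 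Hence for $t\gg1$ the remainder is absorbed and $\|U(t,\cdot)\|_{\dot{H}^s(\mb{R}^n)}\gtrsim t^{-\frac{n+2s}{4(2-\sigma)}}|P_{U_0}|$.

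The step I expect to be most delicate is the non-degeneracy of the leading coefficient: one must verify that the slowest spectral mode carries a nonvanishing share of the total mass, i.e. that $\big|\Pi_1(0)P_{w_0}\big|$ is genuinely comparable to $|P_{U_0}|$ rather than accidentally small, using the limiting diagonalizer $T_0$ of Lemma \ref{Lemma 2.1}. Equally important is the bookkeeping that secures the uniform gain $\kappa>0$: both the oscillatory part $A_{U_0}-iB_{U_0}$ of the data and the $\ml{O}\big(|\xi|^{2-2\sigma}\big)$ corrections to the projector, together with the phase correction $\ml{O}\big(|\xi|^{6-4\sigma}\big)$ in $\lambda_1(|\xi|)$ (which is lower order on the effective region $|\xi|\lesssim t^{-1/(4-2\sigma)}$), must all be shown to strictly improve the decay rate, so that the principal term dominates for large $t$. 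Once this separation is in place, the matching powers of $t$ in the two displayed bounds deliver the asserted sharp two-sided estimate.
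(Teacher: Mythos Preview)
Your overall scheme---the upper bound via Theorem~\ref{Energy estimates L1delta}, and for the lower bound the expansion of Lemmas~\ref{Pre 01}--\ref{Pre 02}, a reverse triangle inequality, and Lemma~\ref{Prem 03}---is exactly the paper's. The difference is in the principal profile: you project onto the single slowest mode through $\Pi_1(\xi)=T_{\sigma,\intt}\diag(1,0,0)T_{\sigma,\intt}^{-1}$, whereas the paper keeps all three diagonal entries by working with the full reference propagator
\[
J_4(t,|\xi|)=T_0T_1T_{1\frac{1}{2}}\diag\big(e^{-|\xi|^{4-2\sigma}t},e^{-|\xi|^2t},e^{-|\xi|^{2\sigma}t}\big)T_{1\frac{1}{2}}^{-1}T_1^{-1}T_0^{-1},
\]
bounds the error $w^{(0)}-J_4P_{U_0}$ by combining the diffusion-phenomena estimate of Theorem~\ref{Thm Diffusion Lm} with Lemmas~\ref{Pre 01}--\ref{Pre 02}, and then estimates $\|\chi_{\intt}|\xi|^sJ_4P_{U_0}\|_{L^2}$ from below via Lemma~\ref{Prem 03} using the sum of all three exponentials.

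Your single-mode choice turns the concern you flag into a genuine gap. With the explicit $T_0$ of Lemma~\ref{Lemma 2.1} one computes
\[
T_0^{-1}P_{U_0}=\Big(-\textstyle\int_{\mb{R}^n}|D|^2u_0\,dx,\ \int_{\mb{R}^n}\theta_0\,dx,\ \int_{\mb{R}^n}u_1\,dx\Big)^{\mathrm{T}},
\]
so $\Pi_1(0)P_{U_0}$ is a scalar multiple of $(-1,1,0)^{\mathrm{T}}$ with coefficient $-\int|D|^2u_0$. This vanishes whenever $\int|D|^2u_0=0$ (for instance for any $u_0\in\ml{S}(\mb{R}^n)$), even though $|P_{U_0}|\neq0$ as soon as $\int u_1\neq0$ or $\int\theta_0\neq0$. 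In that situation the comparability $|\Pi_1(0)P_{w_0}|\asymp|P_{U_0}|$ fails and your displayed lower bound collapses. The paper's route avoids committing to any single spectral component by retaining the full $J_4$ as the profile. To repair your argument along its own lines you should keep all three modes in the principal term, as the paper does, rather than projecting to one.
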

\begin{proof}
	To begin with, let us define
	\begin{align*}
	J_4(t,|\xi|):=T_0T_1T_{1\frac{1}{2}}\diag\left(e^{-|\xi|^{4-2\sigma}t},e^{-|\xi|^2t},e^{-|\xi|^{2\sigma}t}\right)T_{1\frac{1}{2}}^{-1}T_1^{-1}T_0^{-1}.
	\end{align*}
	We use Lemmas \ref{Pre 01}, \ref{Pre 02} and Theorem \ref{Thm Diffusion Lm} to get
	\begin{align*}
	&\left\|\chi_{\intt}(D)\ml{F}^{-1}_{\xi\rightarrow x}\left(w^{(0)}\right)-\chi_{\intt}(D)\ml{F}_{\xi\rightarrow x}^{-1}\left(J_4(t,|\xi|)\right)P_{U_0}\right\|_{\dot{H}^s(\mb{R}^n)}\\
	&\qquad=\left\|\chi_{\intt}(D)\ml{F}^{-1}_{\xi\rightarrow x}\left(w^{(0)}-T_0T_1T_{1\frac{1}{2}}\tilde{w}\right)\right.\\
	&\qquad\qquad+\left.\chi_{\intt}(D)\ml{F}^{-1}_{\xi\rightarrow x}\big(J_4(t,|\xi|)\left(A_{U_0}(\xi)-iB_{U_0}(\xi)\right)\big)\right\|_{\dot{H}^s(\mb{R}^n)}\\
	&\qquad\lesssim (1+t)^{-\gamma(\sigma,n,1,s)-\frac{1-\sigma}{2-\sigma}}\left\|\left(|D|^2u_0,u_1,\theta_0\right)\right\|_{L^1(\mb{R}^n)\times L^1(\mb{R}^n)\times L^1(\mb{R}^n)}\\
	&\qquad\quad\,\,+\left\|\chi_{\intt}(\xi)|\xi|^{s+1}e^{-|\xi|^{4-2\sigma}t}\right\|_{L^2(\mb{R}^n)}\left\|\left(|D|^2u_0,u_1,\theta_0\right)\right\|_{L^{1,1}(\mb{R}^n)\times L^{1,1}(\mb{R}^n)\times L^{1,1}(\mb{R}^n)}\\
	&\qquad\lesssim (1+t)^{-\gamma(\sigma,n,1,s)-\frac{1-\sigma}{2-\sigma}}\left\|\left(|D|^2u_0,u_1,\theta_0\right)\right\|_{L^1(\mb{R}^n)\times L^1(\mb{R}^n)\times L^1(\mb{R}^n)}\\
	&\qquad\quad\,\,+(1+t)^{-\gamma(\sigma,n,1,s+1)}\left\|\left(|D|^2u_0,u_1,\theta_0\right)\right\|_{L^{1,1}(\mb{R}^n)\times L^{1,1}(\mb{R}^n)\times L^{1,1}(\mb{R}^n)}.
	\end{align*}
	To get the lower bounds estimates, we apply triangle inequality to obtain
	\begin{align*}
	&\left\|\chi_{\intt}(D)\ml{F}^{-1}_{\xi\rightarrow x}\left(w^{(0)}\right)\right\|_{\dot{H}^s(\mb{R}^n)}\\
	&\qquad\quad\geq\left\|\chi_{\intt}(D)\ml{F}_{\xi\rightarrow x}^{-1}\left(J_4(t,|\xi|)\right)P_{U_0}\right\|_{\dot{H}^s(\mb{R}^n)}\\
	&\qquad\quad\quad\,\,-\left\|\chi_{\intt}(D)\ml{F}^{-1}_{\xi\rightarrow x}\left(w^{(0)}\right)-\chi_{\intt}(D)\ml{F}_{\xi\rightarrow x}^{-1}\big(J_4(t,|\xi|)\big)P_{U_0}\right\|_{\dot{H}^s(\mb{R}^n)}\\
	&\qquad\quad\gtrsim\left\|\chi_{\intt}(\xi)|\xi|^s\left(e^{-|\xi|^{4-2\sigma}t}+e^{-|\xi|^2t}+e^{-|\xi|^{2\sigma}t}\right)\right\|_{L^2(\mb{R}^n)}\left|P_{U_0}\right|\\
	&\qquad\quad\quad\,\,-(1+t)^{-\gamma(\sigma,n,1,s)-\frac{1-\sigma}{2-\sigma}}\left\|\left(|D|^2u_0,u_1,\theta_0\right)\right\|_{L^1(\mb{R}^n)\times L^1(\mb{R}^n)\times L^1(\mb{R}^n)}\\
	&\qquad\quad\quad\,\,-(1+t)^{-\gamma(\sigma,n,1,s+1)}\left\|\left(|D|^2u_0,u_1,\theta_0\right)\right\|_{L^{1,1}(\mb{R}^n)\times L^{1,1}(\mb{R}^n)\times L^{1,1}(\mb{R}^n)}.
	\end{align*}
	In conclusion, for $t\gg1$ the following estimate holds:
	\begin{align*}
	\left\|\chi_{\intt}(D)\ml{F}^{-1}_{\xi\rightarrow x}\left(w^{(0)}\right)\right\|_{\dot{H}^s(\mb{R}^n)}\gtrsim t^{-\gamma(\sigma,n,1,s)}\left|P_{U_0}\right|.
	\end{align*}
	Combining with the upper bounds estimate for $t\gg1$ such that
	\begin{align*}
	\left\|\ml{F}^{-1}_{\xi\rightarrow x}\left(w^{(0)}\right)\right\|_{\dot{H}^s(\mb{R}^n)}\lesssim t^{-\gamma(\sigma,n,1,s)}\big\|U_0\|_{H^s(\mb{R}^n)\cap L^{1,1}(\mb{R}^n)},
	\end{align*}
	and
	\begin{align*}
	\begin{split}
	\left\|\chi_{\intt}(D)\ml{F}^{-1}_{\xi\rightarrow x}\left(w^{(0)}\right)\right\|_{\dot{H}^s(\mb{R}^n)}\lesssim\left\|\ml{F}^{-1}_{\xi\rightarrow x}\left(w^{(0)}\right)\right\|_{\dot{H}^s(\mb{R}^n)},
	\end{split}
	\end{align*}
	we complete the proof.
\end{proof}

\vskip2mm
\begin{thm}\label{Thm asym 02}
	Let us assume $U_0\in H^s(\mb{R}^n)\cap L^{1,1}(\mb{R}^n)$ with $|P_{U_0}|\neq0$, where $s\geq0$. Then, the solution $U=U(t,x)$ to the Cauchy problem \eqref{Eq. thermo plate Fourier} with $\sigma\in(1,2]$ satisfies the following estimates for $t\gg1$:
	\begin{align*}
	t^{-\frac{n+2s}{4}}\left|P_{U_0}\right|\lesssim\left\|U(t,\cdot)\right\|_{\dot{H}^s(\mb{R}^n)}\lesssim t^{-\frac{n+2s}{4}}\|U_0\|_{H^s(\mb{R}^n)\cap L^{1,1}(\mb{R}^n)}.
	\end{align*}
\end{thm}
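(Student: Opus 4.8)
The plan is to follow the argument of Theorem \ref{Thm asym 01} verbatim, only replacing the reference system associated with Lemma \ref{Lemma 2.1} by the one associated with Lemma \ref{Lemma 2.2}, i.e. \eqref{evolution Fourier law 2}--\eqref{evolution solution Fourier law 2}. Concretely, I would set
\begin{align*}
J_5(t,|\xi|):=T_0\diag\left(e^{-y_1|\xi|^2t},e^{-y_2|\xi|^2t},e^{-y_3|\xi|^2t}\right)T_0^{-1},
\end{align*}
where $y_1,y_2,y_3$ are the roots from \eqref{Value y1 y2 y3} and $T_0$ is the constant, uniformly invertible diagonalizer of Lemma \ref{Lemma 2.2}. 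By Lemma \ref{Pre 01} applied to $U_0$, whose Fourier transform is $w_0^{(0)}$, I split $w_0^{(0)}(\xi)=A_{U_0}(\xi)-iB_{U_0}(\xi)+P_{U_0}$, so that $\chi_{\intt}(D)\ml{F}^{-1}_{\xi\rightarrow x}(J_5(t,|\xi|))P_{U_0}$ becomes the candidate leading profile and the remaining pieces are to be shown negligible for $t\gg1$.

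For the upper bound on the difference $\chi_{\intt}(D)\ml{F}^{-1}_{\xi\rightarrow x}(w^{(0)})-\chi_{\intt}(D)\ml{F}^{-1}_{\xi\rightarrow x}(J_5(t,|\xi|))P_{U_0}$ in $\dot{H}^s$, I would split it into two contributions. The first is the diffusion error $w^{(0)}-T_0\tilde{w}$, controlled directly by Theorem \ref{Thm Diffusion Lm 1} with $m=1$, which yields the rate $(1+t)^{-\frac{n+2s}{4}-(\sigma-1)}$. The second is the part carrying $A_{U_0}-iB_{U_0}$; here Lemma \ref{Pre 02} supplies the factor $|\xi|\,\|U_0\|_{L^{1,1}(\mb{R}^n)}$, and combining it with the elementary bound on $\left\|\chi_{\intt}(\xi)|\xi|^{s+1}e^{-c|\xi|^2t}\right\|_{L^2(\mb{R}^n)}$ produces the rate $(1+t)^{-\frac{n+2s}{4}-\frac12}$. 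Since $\sigma-1>0$, both error rates are strictly faster than the expected main rate $t^{-\frac{n+2s}{4}}$.

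The decisive step is the lower bound for the leading profile. Using the uniform invertibility of $T_0$ and $|P_{U_0}|\neq0$, I would write $|J_5(t,|\xi|)P_{U_0}|^2=\sum_{j=1}^3|p_j|^2e^{-2\,\mathrm{Re}\,y_j|\xi|^2t}$ with $(p_1,p_2,p_3)^{\mathrm{T}}:=T_0^{-1}P_{U_0}\neq0$, and then minorize this sum by $|T_0^{-1}P_{U_0}|^2e^{-2c_{\max}|\xi|^2t}$ where $c_{\max}:=\max_j\mathrm{Re}\,y_j>0$. Applying Lemma \ref{Prem 03} with $\alpha_2=2$ gives $\left\|\chi_{\intt}(\xi)|\xi|^sJ_5(t,|\xi|)P_{U_0}\right\|_{L^2(\mb{R}^n)}\gtrsim t^{-\frac{n+2s}{4}}|P_{U_0}|$; combining this via the triangle inequality with the two faster-decaying errors above yields $\left\|\chi_{\intt}(D)\ml{F}^{-1}_{\xi\rightarrow x}(w^{(0)})\right\|_{\dot{H}^s(\mb{R}^n)}\gtrsim t^{-\frac{n+2s}{4}}|P_{U_0}|$ for $t\gg1$, whence the full lower bound after noting $\left\|\chi_{\intt}(D)\ml{F}^{-1}_{\xi\rightarrow x}(w^{(0)})\right\|_{\dot{H}^s}\lesssim\left\|\ml{F}^{-1}_{\xi\rightarrow x}(w^{(0)})\right\|_{\dot{H}^s}=\|U(t,\cdot)\|_{\dot{H}^s}$. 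The matching upper bound is exactly the weighted-$L^1$ energy estimate recalled at the opening of Section \ref{Section Asymptotic behavior} with $\gamma(\sigma,n,1,s)=\frac{n+2s}{4}$ for $\sigma\in(1,2]$. I expect the main obstacle to be precisely this lower bound: the complex eigenvalues contribute oscillatory phases $e^{-i\,\mathrm{Im}\,y_j|\xi|^2t}$, but since $\dot{H}^s$ is an $L^2$-based norm these phases drop out under Plancherel, so the genuine content is to rule out cancellation in $\diag(e^{-y_j|\xi|^2t})T_0^{-1}P_{U_0}$, which the uniform positivity of $\mathrm{Re}\,y_j$ together with $|P_{U_0}|\neq0$ guarantees.
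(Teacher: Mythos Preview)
Your proposal is correct and follows essentially the same approach as the paper, whose proof is the single line ``Following the proof of Theorem \ref{Thm asym 01} one can complete this proof.'' Your adaptation---replacing $T_0T_1T_{1\frac12}$ and the reference system of Lemma \ref{Lemma 2.1} by $T_0$ and \eqref{evolution Fourier law 2}--\eqref{evolution solution Fourier law 2} from Lemma \ref{Lemma 2.2}, invoking Theorem \ref{Thm Diffusion Lm 1} in place of Theorem \ref{Thm Diffusion Lm}, and applying Lemma \ref{Prem 03} with $\alpha_2=2$---is exactly the intended one; your treatment of the lower bound (noting that the oscillatory phases $e^{-i\,\mathrm{Im}\,y_j|\xi|^2t}$ disappear under Plancherel and using the uniform invertibility of $T_0$) is in fact more carefully spelled out than the corresponding step in the paper's proof of Theorem \ref{Thm asym 01}.
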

\begin{proof}
	Following the proof of Theorem \ref{Thm asym 01} one can complete this proof.
\end{proof}

Finally, we derive asymptotic profiles of solutions to the Cauchy problem \eqref{Eq. thermo plate Fourier} for the case $\sigma=1$.

\vskip2mm
\begin{thm}\label{Thm asym 03}
	Let us assume $U_0\in H^s(\mb{R}^n)\cap L^{1,1}(\mb{R}^n)$ with $|P_{U_0}|\neq0$, where $s\geq0$. Then, the solution $U=U(t,x)$ to the Cauchy problem \eqref{Eq. thermo plate Fourier} with $\sigma=1$ satisfies the following estimates for $t\gg1$:
	\begin{align*}
	t^{-\frac{n+2s}{4}}\left|P_{U_0}\right|\lesssim\left\|U(t,\cdot)\right\|_{\dot{H}^s(\mb{R}^n)}\lesssim t^{-\frac{n+2s}{4}}\|U_0\|_{H^s(\mb{R}^n)\cap L^{1,1}(\mb{R}^n)}.
	\end{align*}
\end{thm}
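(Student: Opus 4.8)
The plan is to imitate the structure of the proof of Theorem \ref{Thm asym 01}, but to dispense with the diffusion-subtraction step entirely, since when $\sigma=1$ the diagonalization of Case 2.3 produces an \emph{exact} representation valid for all frequencies. By Theorem \ref{Thm. Rep. Sol. =1 freq. Fourier} the Fourier image of the solution is
\begin{align*}
w^{(0)}(t,\xi)=J_5(t,|\xi|)\,w_0^{(0)}(\xi),\qquad J_5(t,|\xi|):=T_{1,0}\diag\!\left(e^{-y_4|\xi|^2t},e^{-y_5|\xi|^2t},e^{-y_6|\xi|^2t}\right)T_{1,0}^{-1},
\end{align*}
where $T_{1,0}$ is uniformly invertible and $\text{Re}\,y_j>0$ for $j=4,5,6$ (see \eqref{Value y4 y5 y6}). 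Because $w_0^{(0)}=\hat{U}_0$, I would apply the componentwise expansion of Lemma \ref{Pre 01}, $\hat{U}_0(\xi)=A_{U_0}(\xi)-iB_{U_0}(\xi)+P_{U_0}$, which splits the low-frequency part of the solution into a \emph{main profile} $\chi_{\intt}(\xi)J_5(t,|\xi|)P_{U_0}$ and a \emph{remainder} $\chi_{\intt}(\xi)J_5(t,|\xi|)\big(A_{U_0}(\xi)-iB_{U_0}(\xi)\big)$. As in the proof of Theorem \ref{Thm asym 01}, the bounded and large zones are harmless for the lower bound because $\|\chi_{\intt}(D)U(t,\cdot)\|_{\dot{H}^s(\mb{R}^n)}\le\|U(t,\cdot)\|_{\dot{H}^s(\mb{R}^n)}$.

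For the remainder I would use $|J_5(t,|\xi|)|\lesssim e^{-c|\xi|^2t}$ (from $\text{Re}\,y_j>0$ and the uniform boundedness of $T_{1,0},T_{1,0}^{-1}$) together with $|A_{U_0}(\xi)-iB_{U_0}(\xi)|\lesssim|\xi|\,\|U_0\|_{L^{1,1}(\mb{R}^n)}$ from Lemma \ref{Pre 02}, to obtain
\begin{align*}
&\left\|\chi_{\intt}(\xi)|\xi|^sJ_5(t,|\xi|)\big(A_{U_0}(\xi)-iB_{U_0}(\xi)\big)\right\|_{L^2(\mb{R}^n)}\\
&\qquad\lesssim\left\|\chi_{\intt}(\xi)|\xi|^{s+1}e^{-c|\xi|^2t}\right\|_{L^2(\mb{R}^n)}\|U_0\|_{L^{1,1}(\mb{R}^n)}\lesssim(1+t)^{-\frac{n+2s}{4}-\frac12}\|U_0\|_{L^{1,1}(\mb{R}^n)},
\end{align*}
so the remainder decays by a full power $(1+t)^{-1/2}$ faster than the expected rate. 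For the main profile I would set $Q:=T_{1,0}^{-1}P_{U_0}$, which satisfies $|Q|\sim|P_{U_0}|\neq0$ by uniform invertibility; exploiting again the boundedness of $T_{1,0}^{-1}$ and the fact that the diagonal factor yields a pure sum of squares, I would estimate
\begin{align*}
\left\|\chi_{\intt}(\xi)|\xi|^sJ_5(t,|\xi|)P_{U_0}\right\|_{L^2(\mb{R}^n)}^2\gtrsim\sum_{j=1}^{3}|Q_{j}|^2\left\|\chi_{\intt}(\xi)|\xi|^se^{-\text{Re}\,y_{j+3}|\xi|^2t}\right\|_{L^2(\mb{R}^n)}^2\gtrsim(1+t)^{-\frac{n+2s}{2}}|P_{U_0}|^2,
\end{align*}
where the last inequality is Lemma \ref{Prem 03} with $\alpha_2=2$. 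A triangle inequality and the choice $t\gg1$ then isolate the profile and give $\|U(t,\cdot)\|_{\dot{H}^s(\mb{R}^n)}\ge\|\chi_{\intt}(D)U(t,\cdot)\|_{\dot{H}^s(\mb{R}^n)}\gtrsim t^{-\frac{n+2s}{4}}|P_{U_0}|$.

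The matching upper bound I would read off from Theorem \ref{Energy estimates L1delta} with $\delta=1$: since $\gamma(1,n,1,s)=\tfrac{n+2s}{4}$ and $\gamma(1,n,1,s+1)=\tfrac{n+2s}{4}+\tfrac12$, and since $|P_{U_0}|\lesssim\|U_0\|_{L^{1,1}(\mb{R}^n)}$, both terms there are dominated by $t^{-\frac{n+2s}{4}}\|U_0\|_{H^s(\mb{R}^n)\cap L^{1,1}(\mb{R}^n)}$ for large $t$, which closes the estimate. The delicate step is the lower bound for the main profile: one must rule out cancellation inside $J_5(t,|\xi|)P_{U_0}$. This is precisely where the Case 2.3 structure is essential — because $T_{1,0}$ diagonalizes the full operator for \emph{all} frequencies, the inner factor is genuinely diagonal, so $|\diag(e^{-y_{j+3}|\xi|^2t})Q|^2=\sum_j e^{-2\,\text{Re}\,y_{j+3}|\xi|^2t}|Q_j|^2$ is a nonnegative combination with no interference, and the hypothesis $|P_{U_0}|\neq0$ forces $Q\neq0$. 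The absence of any diffusion gain when $\sigma=1$, recorded at the start of Section \ref{Subsec Fourier asym profile}, is exactly what pins the lower and upper exponents together at $\tfrac{n+2s}{4}$, and the $(1+t)^{-1/2}$ margin of the remainder is what makes the triangle inequality decisive.
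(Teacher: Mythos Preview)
Your proof is correct and follows essentially the same route as the paper: define $J_5$ via the exact representation of Theorem \ref{Thm. Rep. Sol. =1 freq. Fourier}, expand $\hat{U}_0$ by Lemma \ref{Pre 01}, bound the remainder $J_5(A_{U_0}-iB_{U_0})$ with the extra $(1+t)^{-1/2}$ gain, and then invoke the triangle inequality and Lemma \ref{Prem 03} for the main profile, with the upper bound coming from Theorem \ref{Energy estimates L1delta}. Your treatment of the lower bound via $Q=T_{1,0}^{-1}P_{U_0}$ and the sum-of-squares decomposition is in fact more explicit than the paper, which simply instructs the reader to ``repeat the procedure'' of Theorem \ref{Thm asym 01}; your justification that $T_{1,0}$ is a fixed invertible matrix (so $|Q|\sim|P_{U_0}|$ and the diagonal entries yield no interference) is precisely the missing detail behind that instruction.
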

\begin{proof}
	Let us define
	\begin{align*}
	J_5(t,|\xi|):=T_{1,0}\diag\left(e^{-y_4|\xi|^2t},e^{-y_5|\xi|^2t},e^{-y_6|\xi|^2t}\right)T_{1,0}^{-1}.
	\end{align*}
	From Theorem \ref{Thm. Rep. Sol. =1 freq. Fourier}, we know
	\begin{align*}
	&\left\|\chi_{\intt}(D)\ml{F}^{-1}_{\xi\rightarrow x}\left(w^{(0)}\right)-\chi_{\intt}(D)\ml{F}^{-1}_{\xi\rightarrow x}\left(J_5(t,|\xi|)\right)P_{U_0}\right\|_{\dot{H}^s(\mb{R}^n)}\\
	&\qquad\qquad=\left\|\chi_{\intt}(D)\ml{F}^{-1}_{\xi\rightarrow x}\left(J_5(t,|\xi|)\left(A_{U_0}(\xi)-iB_{U_0}(\xi)\right)\right)\right\|_{\dot{H}^s(\mb{R}^n)}\\
	&\qquad\qquad\lesssim(1+t)^{-\frac{n+2s}{4}-\frac{1}{2}}\|U_0\|_{L^{1,1}(\mb{R}^n)},
	\end{align*}
	where $T_{0,1}$, $y_4,y_5,y_6$ are defined in Lemma \ref{Lemma 2.3}.\\
	Then, repeating the procedure of the proof of Theorem \ref{Thm asym 01} we derive for $t\gg1$
	\begin{align*}
	t^{-\frac{n+2s}{4}}\left|P_{U_0}\right|\lesssim\left\|\chi_{\intt}(D)\ml{F}^{-1}_{\xi\rightarrow x}\left(w^{(0)}\right)\right\|_{\dot{H}^s(\mb{R}^n)}\lesssim \left\|\ml{F}^{-1}_{\xi\rightarrow x}\left(w^{(0)}\right)\right\|_{\dot{H}^s(\mb{R}^n)},
	\end{align*}
	and the proof of Theorem \ref{Thm asym 03} is complete.
\end{proof}
\vskip2mm
\begin{rem}
	According to Theorems \ref{Thm asym 01}, \ref{Thm asym 02} and \ref{Thm asym 03}, the thermal dissipation generated by Fourier's law has a dominant influence in comparison with structural damping on long-time asymptotic profiles of solutions when $\sigma\in[1,2]$.
\end{rem}
\section{Concluding remarks}\label{Section Concluding remarks}

\begin{rem}\label{Generalization}
	In general, our method to derive sharp asymptotic profiles of solutions in the framework of $L^{1,1}$ can be probably applied to the Cauchy problem for other systems in elastic material including elastic waves with different damping mechanisms, thermoelastic systems, thermodiffusion systems.\\
	In detail, for elastic waves with friction or structural damping \cite{Reissig2016,ChenReissig2018}, elastic waves with Kelvin-Voigt damping \cite{Chen2018}, thermoelastic systems \cite{JachmannReissig2008,WangYang2006,YangWang2006,YangWang200602,ReissigWang2005} and thermodiffusion systems \cite{LiuReissig2014}, the authors applied diagonalization procedures or asymptotic expansions of eigenvalues/eigenprojections to derive representations of solutions. By these representations of solutions, one may obtain diffusion phenomena with weighted $L^1$ data.  Then, one can follow the method in Section \ref{Section Asymptotic behavior}  to derive the sharp estimates for lower bounds and upper bounds of solutions in a framework of weighted $L^1$ data. 
\end{rem}
\subsection{Summary}
In the following we will collect results for the Cauchy problem for thermoelastic plate equations with friction or structural damping \eqref{Eq. thermo plate Fourier}.

In the paper we first derive Gevrey smoothing of solutions (see Table \ref{Table}) and $L^2$ well-posedness for the Cauchy problem \eqref{Eq. thermo plate Fourier} such that
\begin{align*}
U\in\ml{C}\big([0,\infty),L^2(\mb{R}^n)\big)\,\,\,\,\text{if we assume}\,\,\,\,U_0\in L^2(\mb{R}^n).
\end{align*}

Next, we obtain several decay estimates of solutions. On one hand, we derive the following energy estimates:
\begin{align*}
\|U(t,\cdot)\|_{\dot{H}^s(\mb{R}^n)}\lesssim(1+t)^{-\frac{(2-m)n+2ms}{2mK}}\left\|U_0\right\|_{H^s(\mb{R}^n)\cap L^m(\mb{R}^n)},
\end{align*}
where $s\geq0$, $m\in[1,2]$, and 
\begin{align*}
\|U(t,\cdot)\|_{\dot{H}^s(\mb{R}^n)}\lesssim(1+t)^{-\frac{n+2(s+\delta)}{2K}}\left\|U_0\right\|_{H^s(\mb{R}^n)\cap L^{1,\delta}(\mb{R}^n)}+(1+t)^{-\frac{n+2s}{2K}}\left|P_{U_0}\right|,
\end{align*}
where $s\geq0$, $\delta\in(0,1]$. Here some numbers $K$ (specified in the table below). On the other hand, there are $L^p-L^q$ estimates not necessary on the conjugate line of the form
\begin{align*}
\|U(t,\cdot)\|_{\dot{H}^s_q(\mb{R}^n)}\lesssim (1+t)^{-\frac{s}{K}-\frac{n}{K}\left(\frac{1}{p}-\frac{1}{q}\right)}\|U_0\|,
\end{align*}
for suitable $p,q$ and some numbers $K$ (specified in the table below). Here $\|U_0\|$ corresponds to initial data measured in an appropriate norm, which is based on $L^p$.

Finally, we derive diffusion phenomena with data belonging to different function spaces, and asymptotic profiles of solutions with weighted $L^{1}$ data
\begin{align*}
t^{-\frac{n+2s}{2K}}\left|P_{U_0}\right|\lesssim\left\|U(t,\cdot)\right\|_{\dot{H}^s(\mb{R}^n)}\lesssim t^{-\frac{n+2s}{2K}}\|U_0\|_{H^s(\mb{R}^n)\cap L^{1,1}(\mb{R}^n)}
\end{align*}
for $t\gg1$, where $|P_{U_0}|\neq0$,  $s\geq0$ and some numbers $K$ are chosen in Table \ref{Table}.

\begin{table}[http]
	\centering
	\resizebox{132mm}{25mm}{
		\begin{tabular}{ |m{1.8cm}<{\centering} ||c | c | c| c | c| c |}
			\hline
			& $\sigma=0$ & $\sigma\in(0,1)$ & $\sigma=1$ & $\sigma\in\left(1,3/2\right]$ & $\sigma\in\left(3/2,2\right)$ & $\sigma=2$ \\ 
			\hline
			\hline
			Gevrey smoothing  & \multicolumn{4}{c|}{$\Gamma^{1}(\mb{R}^n)$ (analytic smoothing)}    & $\Gamma^{\frac{1}{4-2\sigma}}(\mb{R}^n)$ & - \\
			\hline
			Energy estimates & \multicolumn{2}{c|}{\makecell[{}{p{2.6cm}}] {$\quad\,\,\,\, K=4-2\sigma$}} &\multicolumn{4}{c|}{\makecell[{}{p{5cm}}] {$\qquad \qquad   \,\,\,\, K=2$}} \\
			\hline
			$L^p-L^q$ estimates & \multicolumn{2}{c|}{\makecell[{}{p{2.6cm}}] {$\,\,\,K=4-2\sigma$ and \\$1\leq p\leq 2\leq q\leq\infty$}} & {\makecell[{}{p{2.3cm}}] {$\quad K=2$ and\\ $1\leq p\leq q\leq\infty$}} &\multicolumn{3}{c|}{\makecell[{}{p{2.7cm}}] {$\qquad K=2$ and\\ $1\leq p\leq 2\leq q\leq\infty$}} \\
			\hline
			Diffusion phenomena (dif. phe.)  & {\makecell[{}{p{1.3cm}}] {double dif. phe.}}& {\makecell[{}{p{1.3cm}}] {triple dif. phe.}} & - & \multicolumn{3}{c|}{\makecell[{}{p{2.7cm}}] {single dif. phe. }} \\
			\hline
			Asymptotic profiles & \multicolumn{2}{c|}{\makecell[{}{p{3.4cm}}] {$\qquad\,\, K=4-2\sigma$}} & \multicolumn{4}{c|}{\makecell[{}{p{5cm}}] {$\qquad \qquad \qquad \qquad \,\,\,\, K=2$}}\\
			\hline
	\end{tabular}}
	\caption{Summary for qualitative properties of solutions}
	\label{Table}
\end{table}
We should point out that when $K=4$, friction has a dominant influence in the corresponding decay estimates; when $K=4-2\sigma$, structural damping has a dominant influence in the corresponding decay estimates; when $K=2$, thermal dissipation generated by Fourier's law has a dominant influence in the corresponding decay estimates.
\subsection{Estimates for the solution itself}
Throughout this paper, we apply diagonalization procedure to get the representations of solutions
\begin{align}\label{solution diag}
U(t,x)=\left(u_t+|D|^2u,u_t-|D|^2u,\theta\right)^{\mathrm{T}}(t,x)
\end{align}
and study some qualitative properties of solutions to the Cauchy problem \eqref{Eq. thermo plate Fourier}.

Nevertheless, up to now, concerning the qualitative properties of the solution $u=u(t,x)$ to the Cauchy problem \eqref{Eq. thermo plate Fourier}, we did not derive any estimate for the solution itself. In this section, we will show some strategies to derive estimates for the solution itself. We propose three different strategies.
\vskip2mm

\noindent\emph{Strategy 1. } Estimates of $u$ by using the Riesz potential theory.
\vskip2mm
We formally define the Riesz potential in $\mb{R}^n$ by its action on a measurable function $f=f(x)$ by convolution, that is
\begin{align*}
\left(\tilde{I}_{2\kappa}f\right)(x)\equiv\left(\tilde{I}_{2\kappa}\ast f\right)(x):=\ml{F}^{-1}\left(|\xi|^{-2\kappa}\hat{f}(\xi)\right)(x)\equiv C_{n,\kappa}\int_{\mb{R}^n}\frac{f(y)}{|x-y|^{n-2\kappa}}dy,
\end{align*}
where $\kappa\in\left(0,\frac{n}{2}\right)$.

The study of the following mapping properties to $\tilde{I}_{2\kappa}$ was initiated by \cite{Sobolev1938}.

\vskip2mm
\begin{lem}\label{Riesz mapping}
	Let us assume $f\in L^{p}(\mb{R}^n)$ for $p\in\left(1,\frac{n}{2\kappa}\right)$. Then, $\tilde{I}_{2\kappa}f\in L^{p^*}(\mb{R}^n)$, where
	\begin{align*}
	\left\|\tilde{I}_{2\kappa}f\right\|_{L^{p^*}(\mb{R}^n)}\lesssim \|f\|_{L^{p}(\mb{R}^n)}\,\,\,\,\text{with}\,\,\,\,\frac{1}{p}-\frac{1}{p^*}=\frac{2\kappa}{n}.
	\end{align*}
\end{lem}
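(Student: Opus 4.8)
The plan is to prove this classical Hardy--Littlewood--Sobolev inequality by Hedberg's pointwise method, which reduces the $L^p\to L^{p^*}$ bound to the $L^p$ boundedness of the Hardy--Littlewood maximal operator $M$. Writing $\alpha=2\kappa\in(0,n)$ and assuming $f\geq0$ without loss of generality (replace $f$ by $|f|$), the first step is to split the defining integral at a free radius $R>0$:
\begin{align*}
\big(\tilde{I}_{2\kappa}f\big)(x)=C_{n,\kappa}\int_{|x-y|\leq R}\frac{f(y)}{|x-y|^{n-\alpha}}dy+C_{n,\kappa}\int_{|x-y|>R}\frac{f(y)}{|x-y|^{n-\alpha}}dy=:\mathrm{I}_{\mathrm{near}}(x)+\mathrm{I}_{\mathrm{far}}(x).
\end{align*}

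Next I would estimate the two pieces separately. For $\mathrm{I}_{\mathrm{near}}$, I decompose the ball $\{|x-y|\leq R\}$ into dyadic annuli $\{2^{-j-1}R<|x-y|\leq 2^{-j}R\}$ for $j\geq0$; on each annulus the kernel is comparable to $(2^{-j}R)^{\alpha-n}$, while the mass of $f$ over the corresponding ball is controlled by $(2^{-j}R)^{n}Mf(x)$. Summing the resulting geometric series in $j$ gives $\mathrm{I}_{\mathrm{near}}(x)\lesssim R^{\alpha}Mf(x)$. For $\mathrm{I}_{\mathrm{far}}$, H\"older's inequality with exponents $p,p'$ yields
\begin{align*}
\mathrm{I}_{\mathrm{far}}(x)\lesssim\|f\|_{L^p(\mb{R}^n)}\left(\int_{|x-y|>R}|x-y|^{-(n-\alpha)p'}dy\right)^{1/p'}\lesssim R^{\alpha-\frac{n}{p}}\|f\|_{L^p(\mb{R}^n)},
\end{align*}
where the radial integral converges precisely because $(n-\alpha)p'>n$, equivalently $p<\frac{n}{\alpha}=\frac{n}{2\kappa}$; this is exactly the hypothesis imposed on $p$, so the condition $p\in\left(1,\frac{n}{2\kappa}\right)$ enters naturally here.

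Combining the two bounds gives $\big|\tilde{I}_{2\kappa}f(x)\big|\lesssim R^{\alpha}Mf(x)+R^{\alpha-\frac{n}{p}}\|f\|_{L^p(\mb{R}^n)}$, and I would then optimize by choosing $R=\big(\|f\|_{L^p(\mb{R}^n)}/Mf(x)\big)^{p/n}$ to balance the two terms. Using the relation $\frac{1}{p^*}=\frac{1}{p}-\frac{\alpha}{n}$, which gives $\alpha p/n=1-p/p^*$, this produces the pointwise estimate
\begin{align*}
\big|\tilde{I}_{2\kappa}f(x)\big|\lesssim\big(Mf(x)\big)^{p/p^*}\|f\|_{L^p(\mb{R}^n)}^{1-p/p^*}.
\end{align*}
Raising to the power $p^*$, integrating over $\mb{R}^n$, and invoking the maximal inequality $\|Mf\|_{L^p(\mb{R}^n)}\lesssim\|f\|_{L^p(\mb{R}^n)}$ then closes the estimate and yields $\|\tilde{I}_{2\kappa}f\|_{L^{p^*}(\mb{R}^n)}\lesssim\|f\|_{L^p(\mb{R}^n)}$. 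The main analytic input, and the principal obstacle, is the $L^p$ boundedness of $M$, which holds only for $p>1$; this is exactly why the endpoint $p=1$ is excluded, since there one has merely a weak-type substitute. A minor technical point is that the optimization degenerates when $Mf(x)=0$, but in that case $f$ vanishes a.e.\ near $x$ and the bound is trivial, so it does not affect the argument.
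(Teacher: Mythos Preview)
Your argument is correct: this is the standard Hedberg pointwise proof of the Hardy--Littlewood--Sobolev inequality, and each step (dyadic near-field bound by $R^{\alpha}Mf(x)$, H\"older far-field bound with convergence exactly when $p<n/(2\kappa)$, optimization in $R$, and the $L^p$ maximal inequality for $p>1$) is carried out cleanly.

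The paper, however, does not prove this lemma at all; it simply records it as a classical mapping property of the Riesz potential and attributes it to Sobolev's 1938 paper. So your proposal is strictly more than what the paper supplies: you give a self-contained modern proof where the paper only quotes the result. The trade-off is that the paper keeps the focus on its own PDE estimates and treats this as background, while your Hedberg argument makes the dependence on the hypotheses $p>1$ and $p<n/(2\kappa)$ transparent (the former for the maximal theorem, the latter for integrability of the far-field kernel).
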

Then, we may estimates the solution itself by
\begin{align*}
\|u(t,\cdot)\|_{L^q(\mb{R}^n)}&\lesssim\left\|\ml{F}^{-1}_{\xi\rightarrow x}\left(|\xi|^{-2}w^{(0)}\right)(t,\cdot)\right\|_{L^q(\mb{R}^n)}=\left\|\tilde{I}_2\ml{F}^{-1}_{\xi\rightarrow x}\left(w^{(0)}\right)(t,\cdot)\right\|_{L^q(\mb{R}^n)}\\
&\lesssim\left\|\ml{F}^{-1}_{\xi\rightarrow x}\left(w^{(0)}\right)(t,\cdot)\right\|_{L^{\frac{2q+n}{nq}}(\mb{R}^n)},
\end{align*}
where $0<\frac{1}{q}<1-\frac{2}{n}$. Next, following a similar procedure of Section \ref{Subsec Fouier estimate}, one may complete estimates of the solution itself.
\vskip2mm

\noindent\emph{Strategy 2. } Estimates of $u$ by using the integral formula.
\vskip2mm
For the case $\frac{1}{q}\in[0,1]\backslash\left(0,1-\frac{2}{n}\right)$, we cannot apply \emph{Strategy 1}. Therefore, by the integral formula
\begin{align*}
u(t,x)-u(0,x)=\int_0^tu_{\tau}(\tau,x)d\tau,
\end{align*}
we obtain
\begin{align*}
\|u(t,\cdot)\|_{L^q(\mb{R}^n)}\lesssim\|u_0\|_{L^q(\mb{R}^n)}+\int_0^t\|u_{\tau}(\tau,\cdot)\|_{L^q(\mb{R}^n)}d\tau.
\end{align*}
Next, we apply the estimates of $u_{\tau}(\tau,\cdot)$ in the $L^q$ norm to complete estimates of the solution itself. We should remark that to apply this strategy, we need to take an additional assumption on the first data such that $u_0\in L^q(\mb{R}^n)$.
\vskip2mm

\noindent\emph{Strategy 3. } Estimates of $u$ by using the representation of the solution.
\vskip2mm
By some direct calculations, we may transfer the Cauchy problem \eqref{Eq. thermo plate Fourier} to the following Cauchy problem for third-order equation:
\begin{align}\label{Eq. third eq}
\left\{
\begin{aligned}
&u_{ttt}+(-\Delta)^{\sigma}u_{tt}-\Delta u_{tt}+2\Delta^2u_t+(-\Delta)^{\sigma+1}u_t-\Delta^3u=0,&&t>0,\,\,x\in\mb{R}^n,\\
&(u,u_t,u_{tt})(0,x)=(u_0,u_1,u_2)(x),&&x\in\mb{R}^n,
\end{aligned}
\right.
\end{align}
where $\sigma\in[0,2]$ and \begin{align*}
u_2(x):=-\Delta^2u_0(x)-(-\Delta)^{\sigma}u_1(x)-\Delta\theta_0(x).
\end{align*}
Applying the partial Fourier transformation with respect to spatial variables to \eqref{Eq. third eq}, we obtain an ordinary differential equation depending on the parameter $|\xi|$
\begin{align}\label{Eq. third eq Fourier}
\left\{
\begin{aligned}
&\hat{u}_{ttt}+\left(|\xi|^{2\sigma}+|\xi|^2\right) \hat{u}_{tt}+\left(2|\xi|^{4}+|\xi|^{2\sigma+2}\right)\hat{u}_t+|\xi|^6\hat{u}=0,&&t>0,\,\,\xi\in\mb{R}^n,\\
&(\hat{u},\hat{u}_t,\hat{u}_{tt})(0,\xi)=(\hat{u}_0,\hat{u}_1,\hat{u}_2)(\xi),&&\xi\in\mb{R}^n.
\end{aligned}
\right.
\end{align}
The characteristic roots $\lambda_j=\lambda_j(|\xi|)$, $j=1,2,3,$ for the equation of \eqref{Eq. third eq Fourier} satisfy the parameter dependent cubic equation
\begin{align}\label{Cubic equation}
\lambda^3+\left(|\xi|^{2\sigma}+|\xi|^2\right)\lambda^2+\left(2|\xi|^4+|\xi|^{2\sigma+2}\right)\lambda+|\xi|^6=0.
\end{align} 
We may find the exact solution of \eqref{Cubic equation} as follows:
\begin{align}\label{lambda}
\lambda_{j}=r_{3,j}-\frac{r_1}{3r_{3,j}}-\frac{|\xi|^{2\sigma}+|\xi|^2}{3},
\end{align}
where
\begin{align*}
r_1&=\frac{1}{3}\left(5|\xi|^4+|\xi|^{2\sigma+2}-|\xi|^{4\sigma}\right),\\
r_2&=\frac{1}{27}\left(11|\xi|^6+2|\xi|^{6\sigma}-3|\xi|^{4\sigma+2}-2|\xi|^{2\sigma+4}\right),\\
r_{3}^3&=\frac{1}{2}\left(-r_2\pm\sqrt{r_2^2+\frac{4}{27}r_1^3}\right),\,\,\,\,\text{where}\,\,\,\,r_{3,1},r_{3,1},r_{3,1}\text{ are complex solutions of it.}
\end{align*}
It provides an opportunity for us to derive explicit representations of solutions to \eqref{Eq. third eq Fourier} such that
\begin{align*}
\hat{u}(t,\xi)=c_1(\xi)e^{\lambda_1(|\xi|)t}+c_2(\xi)e^{\lambda_2(|\xi|)t}+c_3(\xi)e^{\lambda_3(|\xi|)t},
\end{align*}
where $\lambda_j(|\xi|)$ are given by \eqref{lambda} and the coefficients $c_j(\xi)$ are given by
\begin{small}
	\begin{align*}
	\begin{split}
	c_1(\xi)&=\frac{\lambda_2(|\xi|)\lambda_3(|\xi|)\hat{u}_0(\xi)-\lambda_2(|\xi|)\hat{u}_1(\xi)-\lambda_3(|\xi|)\hat{u}_1(\xi)-|\xi|^4\hat{u}_0(\xi)-|\xi|^{2\sigma}\hat{u}_1(\xi)+|\xi|^2\hat{\theta}_0(\xi)}{\lambda_1^2(|\xi|)-\lambda_1(|\xi|)\lambda_2(|\xi|)-\lambda_1(|\xi|)\lambda_3(|\xi|)+\lambda_2(|\xi|)\lambda_3(|\xi|)},\\
	c_2(\xi)&=\frac{\lambda_1(|\xi|)\lambda_3(|\xi|)\hat{u}_0(\xi)-\lambda_1(|\xi|)\hat{u}_1(\xi)-\lambda_3(|\xi|)\hat{u}_1(\xi)-|\xi|^4\hat{u}_0(\xi)-|\xi|^{2\sigma}\hat{u}_1(\xi)+|\xi|^2\hat{\theta}_0(\xi)}{\lambda_2^2(|\xi|)-\lambda_1(|\xi|)\lambda_2(|\xi|)+\lambda_1(|\xi|)\lambda_3(|\xi|)-\lambda_2(|\xi|)\lambda_3(|\xi|)},\\
	c_3(\xi)&=\frac{\lambda_1(|\xi|)\lambda_2(|\xi|)\hat{u}_0(\xi)-\lambda_1(|\xi|)\hat{u}_1(\xi)-\lambda_2(|\xi|)\hat{u}_1(\xi)-|\xi|^4\hat{u}_0(\xi)-|\xi|^{2\sigma}\hat{u}_1(\xi)+|\xi|^2\hat{\theta}_0(\xi)}{\lambda_3^2(|\xi|)+\lambda_1(|\xi|)\lambda_2(|\xi|)-\lambda_1(|\xi|)\lambda_3(|\xi|)-\lambda_2(|\xi|)\lambda_3(|\xi|)}.
	\end{split}
	\end{align*}
	
\end{small}
It is possible to derive estimates for $u=u(t,x)$ by applying these representations.

\vskip2mm

\section*{Acknowledgments} 
The PhD study of Mr. Wenhui Chen is supported by S\"achsiches Landesgraduiertenstipendium. The author thank his supervisor Michael Reissig for the suggestions in the preparation of the final version.

\bibliographystyle{elsarticle-num}

\begin{thebibliography}{99}
	
\bibitem{AvalosLasiecka1997} G. Avalos and I. Lasiecka, {\em Exponential stability of a thermoelastic system without mechanical dissipation}, Rend. Istit. Mat. Univ. Trieste Suppl.,
28:1--28, 1997.

\bibitem{Chen2018} W. Chen, {\em Decay properties and asymptotic profiles for elastic waves with Kelvin-Voigt damping in 2D}, Preprint, 2019. arXiv:1811.02223

\bibitem{ChenReissig2018}W. Chen and M. Reissig, {\em Weakly coupled systems of semilinear elastic waves with different damping mechanisms in 3D}, Math. Methods Appl. Sci.,
42(2):667--709, 2019.

\bibitem{DabbiccoEbert2014}M. D'Abbicco and M.R. Ebert, {\em Diffusion phenomena for the wave equation with structural damping in the $L^p-L^q$ framework}, J. Differential Equations,
256(7):2307--2336, 2014.

\bibitem{DenkRacke2006}R. Denk and R. Racke, {\em $L^p$-resolvent estimates and time decay for generalized thermoelastic plate equations}, Electron. J. Differential Equations,
48:16 pp, 2006.

\bibitem{DenkRackeShibata2009}R. Denk, R. Racke, and Y. Shibata, {\em $L_p$ theory for the linear thermoelastic plate equations in bounded and exterior domains}, Adv. Differential Equations,
14(7--8):685--715, 2009.

\bibitem{DenkRackeShibata2010}R. Denk, R. Racke, and Y. Shibata, {\em Local energy decay estimate of solutions to the thermoelastic plate equations in two- and three-dimensional exterior domains}, Z. Anal. Anwend,
29(1):21--62, 2010.

\bibitem{ReissigEbert2018}M.R. Ebert and  M. Reissig, {\em Methods for partial differential equations. Qualitative properties of solutions, phase space analysis, semilinear models}, Birkh\"auser, 2018.

\bibitem{FatoriJorgeSilvaMaYang2015}L.H. Fatori, M.A. Jorge Silva, T.F. Ma, and Z. Yang, {\em Long-time behavior of a class of thermoelastic plates with nonlinear strain}, J. Differential Equations,
259(9):4831--4862, 2015.

\bibitem{FischerRacke2018} L. Fischer and R. Racke, {\em Ill-posedness of coupled systems with delay}, London Math. Soc. Lecture Note Ser., 439:151--160, 2018.

\bibitem{Hao2008} H. Wu, {\em Long-time behavior for a nonlinear plate equation with thermal memory}, J. Math. Anal. Appl.,
348(2):650--670, 2008.

\bibitem{HorbachIkehataCharao2016}J.L. Horbach, R. Ikehata, and R.C. Char\~ao, {\em Optimal decay rates and asymptotic profile for the plate equation with structural damping}, J. Math. Anal. Appl.,
440(2):529--560, 2016.

\bibitem{IdeHaramotoKawashima2008} K. Ide, K. Haramoto, and S. Kawashima, {\em Decay property of regularity-loss type for dissipative Timoshenko system}, Math. Models Methods Appl. Sci., 18(5):647--667, 2008.


\bibitem{Ikehata2004}R. Ikehata, {\em New decay estimates for linear damped wave equations and its application to nonlinear problem}, Math. Methods Appl. Sci.,
27(8):865--889, 2004.

\bibitem{Ikehata2013}R. Ikehata, {\em Revisit on how to derive asymptotic profiles to some evolution equations}, Preprint,
2013. arXiv:1506.04858

\bibitem{Ikehata2014}R. Ikehata, {\em Asymptotic profiles for wave equations with strong damping}, J. Differential Equations,
257(6):2159--2177, 2014.

\bibitem{IkehataSoga2014} R. Ikehata and M. Soga, {\em Asymptotic profiles for a strongly damped plate equation with lower order perturbation}, Commun. Pure Appl. Anal., 14(5):1759--1780, 2015.

\bibitem{JachmannReissig2008} K. Jachmann, {\em A unified treatment of models of thermoelasticity}, PhD Thesis,
TU Bergakademie Freiberg, 2008.

\bibitem{Kim1992}J.U. Kim, {\em On the energy decay of a linear thermoelastic bar and plate}, SIAM J. Math. Anal.,
23(4):889--899, 1992.

\bibitem{LasieckaTriggiani1998}I. Lasiecka and R. Triggiani, {\em Two direct proofs on the analyticity of the s.c. semigroup arising in abstract thermo-elastic equations}, Adv. Differential Equations,
3(3):387--416, 1998.

\bibitem{LasieckaTriggiani199802}I. Lasiecka and R. Triggiani, {\em Analyticity, and lack thereof, of thermo-elastic semigroups}, ESAIM, Proc.,
4:199--222, 1998.

\bibitem{LasieckaTriggiani199803}I. Lasiecka and R. Triggiani, {\em Analyticity of thermo-elastic semigroups with coupled hinged/Neumann B.C.}, Abstr. Appl. Anal.,
3(1-2):153--169, 1998.

\bibitem{LasieckaTriggiani199804}I. Lasiecka and R. Triggiani, {\em Analyticity of thermo-elastic semigroups with free boundary conditions}, Ann. Scuola Norm. Sup. Pisa CI. Sci. (4),
27(3-4):457--482, 1998.

\bibitem{LiuLiu1997}K. Liu and Z. Liu, {\em Exponential stability and analyticity of abstract linear thermoelastic systems}, Z. Angew. Math. Phys.,
48(6):885--904, 1997.

\bibitem{LiuReissig2014} Y. Liu and M. Reissig, {\em Models of thermodiffusion in 1D}, Math. Methods Appl. Sci., 37(6):817--837, 2014.

\bibitem{MunozRiveraRacke1995}J.E. Mu\~noz Rivera and R. Racke, {\em Smoothing properties, decay, and global existence of solutions to nonlinear coupled systems of thermoelastic type}, SIAM J. Math. Anal.,
26(6):1547--1563, 1995.

\bibitem{MunozRiverRacke1996}J.E. Mu\~noz Rivera and R. Racke, {\em Large solutions and smoothing properties for nonlinear thermoelastic systems}, J. Differential. Equations,
127(2):454--483, 1996.

\bibitem{DuongKainaneReissig2015}D.T. Pham, M. Mohamed Kainane, and M. Reissig, {\em Global existence for semi-linear structurally damped $\sigma$-evolution models}, J. Math. Anal. Appl.,
431(1):569--596, 2015.

\bibitem{Reissig2016}M. Reissig, {\em Structurally damped elastic waves in 2D}, Math. Methods Appl. Sci.,
39(15):4618--4628, 2016.

\bibitem{ReissigSmith2005}M. Reissig and J. Smith, {\em $L^p$-$L^q$ estimate for wave equation with bounded time dependent coefficient}, Hokkaido Math. J.,
34(3):541--586, 2005.

\bibitem{ReissigWang2005}M. Reissig and Y.G. Wang, {\em Cauchy problems for linear thermoelastic systems of type III in one space variable}, Math. Methods Appl. Sci.,
28(11):1359--1381, 2005.

\bibitem{RackeUeda2016}R. Racke and Y. Ueda, {\em Dissipative structures for thermoelastic plate equations in $\mb{R}^n$}, Adv. Differential Equations,
21(7-8):610--630, 2016.

\bibitem{Rodino1993}L. Radino, {\em Linear partial differential operators in Gevrey spaces}, World Scientific Publishing Co., Inc., River Edge, NJ, 1993.


\bibitem{SaidHouari2013}B. Said-Houari, {\em Decay properties of linear thermoelastic plates: Cattaneo versus Fourier law}, Appl. Anal.,
92(2):424--440, 2013.

\bibitem{Sobolev1938}S.L. Sobolev, {\em On a theorem of functional analysis}, Mat. Sbornik,
4(4):471--497, 1938.

\bibitem{WangWnag2018}D. Wang and Y. Wang, {\em Pullback attractor for $N$-dimensional thermoelastic coupled structure equations}, Bound. Value Probl.,
5:21 pp, 2018.

\bibitem{WangYang2006}Y.G. Wang and L. Yang, {\em $L^p-L^q$ decay estimates for Cauchy problems of linear thermoelastic systems with second sound in three dimensions}, Proc. Roy. Soc. Edinburgh Sect. A,
136(1):189--207, 2006.

\bibitem{WangZhang2017}D. Wang and J. Zhang, {\em Long-time dynamics of $N$-dimensional structure equations with thermal memory}, Bound. Value Probl.,
136:21 pp, 2017.

\bibitem{Yagdjian1997}K. Yagdjian, {\em The Cauchy problem for hyperbolic operators. Multiple characteristics. Micro-local approach}, Mathematical Topics, Akademie Verlag, Berlin, 1997.

\bibitem{YangWang2006}L. Yang and Y.G. Wang, {\em $L^p-L^q$ decay estimates for the Cauchy problem of linear thermoelastic systems with second sound in one space variable}, Quart. Appl. Math.,
64(1):1--15, 2006.

\bibitem{YangWang200602}L. Yang and Y.G. Wang, {\em Well-posedness and decay estimates for Cauchy problems of linear thermoelastic systems of type III in 3-D}, Indiana Univ. Math. J.,
55(4):1333--1361, 2006.
\end{thebibliography}

\end{document}